\tikzstyle{vertex}=[circle, draw, inner sep=2pt, fill=white]%, minimum size=6pt] 
\newcommand{\E}{{\mathbb E}}
\renewcommand{\P}{{\mathbb P}}
\newcommand{\Q}{{\mathbb Q}}
\newcommand{\C}{{\mathbb C}}
\newcommand{\R}{{\mathbb R}}
\renewcommand{\S}{{\mathbb S}}
\newcommand{\N}{{\mathbb N}}
\newcommand{\Ecal}{{\mathcal E}}
\newcommand{\Fcal}{{\mathcal F}}
\newcommand{\Hcal}{{\mathcal H}}
\newcommand{\Tcal}{{\mathcal T}}
\newcommand{\Xcal}{{\mathcal X}}
\newcommand{\Mid}{{\ \Big|\ }}
\newcommand{\lc}{[\![}
\newcommand{\rc}{]\!]}
\newcommand{\fdot}{{\,\cdot\,}}
\newcommand{\id}{{I}}
\newcommand{\iii}{{\vert\kern-0.25ex\vert\kern-0.25ex\vert}}
\newcommand{\vertiii}[1]{{\left\vert\kern-0.25ex\left\vert\kern-0.25ex\left\vert #1 
    \right\vert\kern-0.25ex\right\vert\kern-0.25ex\right\vert}}
\DeclareMathOperator{\tr}{tr}
\newtheorem{theorem}{Theorem}
\newtheorem{corollary}[theorem]{Corollary}
\newtheorem{definition}[theorem]{Definition}
\newtheorem{lemma}[theorem]{Lemma}
\newtheorem{remark}[theorem]{Remark}
\theoremstyle{definition}
\newtheorem{example}[theorem]{Example}
\numberwithin{equation}{section}
\numberwithin{theorem}{section}
\definecolor{darkgreen}{rgb}{0,0.7,0}
\DeclareMathOperator{\diag}{diag}
\begin{document}

\title{Affine Volterra processes\footnote{Acknowledgments: The authors wish to thank Bruno Bouchard, Omar El Euch, Camille Illand, and Mathieu Rosenbaum for useful comments and fruitful discussions. The authors also thank the anonymous referees for their careful reading of the manuscript and suggestions. Martin Larsson gratefully acknowledges financial support by the Swiss National Science Foundation (SNF) under grant 205121\textunderscore163425. The research of Sergio Pulido benefited from the support of the Chair Markets in Transition (F\'ed\'eration Bancaire Fran\c caise) and the project ANR 11-LABX-0019.}}
\author{Eduardo Abi Jaber\thanks{Universit\'e Paris-Dauphine, PSL Research University, CNRS, UMR [7534], CEREMADE, 75016 Paris, France and AXA Investment Managers,  Multi Asset Client Solutions, Quantitative Research,
			6 place de la Pyramide, 92908 Paris - La D\'efense, France, abijaber@ceremade.dauphine.fr.}
\and Martin Larsson\thanks{Department of Mathematical Sciences, Carnegie Mellon University, Pittsburgh, USA, martinl@andrew.cmu.edu.}
\and Sergio Pulido\thanks{Laboratoire de Math\'ematiques et Mod\'elisation d'\'Evry (LaMME), Universit\'e d'\'Evry-Val-d'Essonne, ENSIIE, Universit\'e Paris-Saclay, UMR CNRS 8071, IBGBI 23 Boulevard de France, 91037 \'Evry Cedex, France, sergio.pulidonino@ensiie.fr.}}

\maketitle

\begin{abstract}
We introduce affine Volterra processes, defined as solutions of certain stochastic convolution equations with affine coefficients. Classical affine diffusions constitute a special case, but affine Volterra processes are neither semimartingales, nor Markov processes in general. We provide explicit exponential-affine representations of the Fourier--Laplace functional in terms of the solution of an associated system of deterministic integral equations of convolution type, extending well-known formulas for classical affine diffusions. For specific state spaces, we prove existence, uniqueness, and invariance properties of solutions of the corresponding stochastic convolution equations. Our arguments avoid infinite-dimensional stochastic analysis as well as stochastic integration with respect to non-semimartingales, relying instead on tools from the theory of finite-dimensional deterministic convolution equations. Our findings generalize and clarify recent results in the literature on rough volatility models in finance.
\\[2ex] 
\noindent{\textbf {Keywords:} stochastic Volterra equations, Riccati--Volterra equations, affine processes, rough volatility.}
\\[2ex]
\noindent{\textbf {MSC2010 classifications:} 60J20 (primary), 60G22, 45D05, 91G20 (secondary).}
\end{abstract}

\section{Introduction}

We study a class of $d$-dimensional stochastic convolution equations of the form
\begin{equation} \label{SVE}
X_t = X_0 + \int_0^t K(t-s) b(X_s) ds + \int_0^t K(t-s) \sigma(X_s) dW_s,
\end{equation}
where $W$ is a multi-dimensional Brownian motion, and the convolution kernel $K$ and coefficients $b$ and $\sigma$ satisfy regularity and integrability conditions that are discussed in detail after this introduction. We refer to equations of the form \eqref{SVE} as {\em stochastic Volterra equations} (of convolution type), and their solutions are always understood to be adapted processes defined on some stochastic basis $(\Omega,\Fcal,(\Fcal_t)_{t\ge0},\P)$ satisfying the usual conditions. Stochastic Volterra equations have been studied by numerous authors; see e.g.~\cite{BM:80:1,BM:80:2,P:85,PP:90,cou_dec_01,Z:10,MS:15} among many others. In Theorem~\ref{T:EXISTENCE} and Theorem~\ref{T:existence orthant} we provide new existence results for \eqref{SVE} under weak conditions on the kernel and coefficients.

We are chiefly interested in the situation where  $a(x)=\sigma(x)\sigma(x)^\top$ and $b(x)$ are affine of the form
\begin{equation} \label{a and b intro}
\begin{aligned}
a(x) &= A^0 + x_1A^1 + \cdots + x_d A^d \\
b(x) &= b^0 + x_1b^1 + \cdots + x_d b^d ,
\end{aligned}
\end{equation}
for some $d$-dimensional symmetric matrices $A^i$ and vectors $b^i$. In this case we refer to solutions of \eqref{SVE} as {\em affine Volterra processes}. {\em Affine diffusions}, as studied in \cite{DFS:03}, are particular examples of affine Volterra processes of the form \eqref{SVE} where the convolution kernel $K\equiv\id$ is constant and equal to the $d$-dimensional identity matrix. In this paper we do not consider processes with jumps.

Stochastic models using classical affine diffusions are tractable because their Fourier--Laplace transform has a simple form. It can be written as an exponential-affine function of the initial state, in terms of the solution of a system of ordinary differential equations, known as the Riccati equations, determined by the affine maps \eqref{a and b intro}. More precisely, let $X$ be an affine diffusion of the form \eqref{SVE} with $K\equiv\id$. Then, given a $d$-dimensional row vector $u$ and under suitable integrability conditions, we have
\begin{equation}\label{chfaffine intro1}
		\E\left[ \exp\left(u X_T\right)  \Mid \Fcal_t \right] =\exp\left(\phi(T-t)+\psi(T-t) X_t\right),
\end{equation}
where the real-valued function $\phi$ and row-vector-valued function $\psi$ satisfy the Riccati equations
\begin{displaymath}
\begin{split}
\phi(t)&=\int_0^t\left(\psi(s)b_0+\frac12\psi(s)A_0\psi(s)^\top\right)\,ds \\
\psi(t) &= u + \int_0^t \left(\psi(s) B + \frac12 A(\psi(s)) \right) \,ds,
\end{split}
\end{displaymath}
with $A(u) = (u A^1u^\top, \ldots, u A^d u^\top)$ and  $B = (b^1 \ \cdots \ b^d)$. Alternatively, using the variation of constants formula on $X$ and $\psi$, one can write the Fourier--Laplace transform as
\begin{equation}\label{chfaffine intro2}
\E\left[ \exp\left(u X_T\right)  \Mid \Fcal_t \right]  =\exp\left( \E[u X_T \mid\Fcal_t] + \frac12 \int_t^T \psi(T-s) a(\E[X_s\mid\Fcal_t])\psi(T-s)^\top ds\right).
\end{equation}

For more general kernels $K$, affine Volterra processes are typically neither semimartingales, nor Markov processes. Therefore one cannot expect a formula like \eqref{chfaffine intro1} to hold in general. However, we show in Theorem \ref{T:cf} below that, remarkably, \eqref{chfaffine intro2} does continue to hold, where now the function $\psi$ solves the {\em Riccati--Volterra equation} 
\begin{equation} \label{RicVol intro}
\psi(t) = uK(t) + \int_0^t\left( \psi(s) B + \frac12 A(\psi(s)) \right) K(t-s)\,ds.
\end{equation}
Furthermore, it is possible to express \eqref{chfaffine intro2} in a form that is exponential-affine in the past trajectory $\{X_s,\,s\le t\}$. This is done in Theorem~\ref{T:aff_past}.

For the state spaces $\R^d$, $\R^d_+$, and $\R\times\R_+$, corresponding to the {\em Volterra Ornstein--Uhlenbeck}, {\em Volterra square-root}, and {\em Volterra Heston} models, we establish existence and uniqueness of global solutions of both the stochastic equation \eqref{SVE} and the associated Riccati--Volterra equation \eqref{RicVol intro}, under general parameter restrictions. For the state spaces $\R^d_+$ and $\R\times\R_+$, which are treated in Theorem~\ref{T:VolSqrt} and Theorem~\ref{T:VolterraHeston}, this involves rather delicate invariance properties for these equations. While standard martingale and stochastic calculus arguments play an important role in several places, the key tools that allow us to handle the lack of Markov and semimartingale structure are the {\em resolvents of first and second kind} associated with the convolution kernel $K$. Let us emphasize in particular that no stochastic integration with respect to non-semimartingales is needed. Furthermore, by performing the analysis on the level of finite-dimensional integral equations, we avoid the infinite-dimensional analysis used, for instance, by~\cite{MS:15}. We also circumvent the need to study scaling limits of Hawkes processes as in \cite{EER:06,EluchFukasawaRosenbaum2016,EER:07}.

Our motivation for considering affine Volterra processes comes from applications in financial modeling. Classical affine processes arguably constitute the most popular framework for building tractable multi-factor models in finance. They have been used to model a vast range of risk factors such as credit and liquidity factors, inflation and other macro-economic factors, equity factors, and factors driving the evolution of interest rates; see~\cite{DFS:03} and the references therein. In particular, affine stochastic volatility models, such as the \cite{heston1993closed} model, are very popular.

However, a growing body of empirical research indicates that volatility fluctuates more rapidly than Brownian motion, which is inconsistent with standard semimartingale affine models. Fractional volatility models such as those by \cite{com_cou_ren_12,GJR:17,volatilityrough2014,Bayeretal2016,EER:06,BLP:16} have emerged as compelling alternatives, although tractability can be a challenge for these non-Markovian, non-semimartingales models. Nonetheless, \cite{GJR:17} and \cite{EER:06,EER:07} show that there exist fractional adaptations of the Heston model where the Fourier--Laplace transform can be found explicitly, modulo the solution of a specific {\em fractional Riccati equation}. These models are of the affine Volterra type \eqref{SVE} involving singular kernels proportional to $t^{\alpha-1}$. Our framework subsumes and extends these examples.

The paper is structured as follows. Section~\ref{S:conv} covers preliminaries on convolutions and their resolvents, and in particular develops the necessary stochastic calculus. Section~\ref{S:SVE} gives existence theorems for stochastic Volterra equations on $\R^d$ and $\R^d_+$. Section~\ref{S:aff Vol} introduces affine Volterra processes on general state spaces and develops the exponential-affine transform formula. Sections~\ref{S:VOU} through~\ref{S:VH} contain detailed discussions for the state spaces $\R^d$, $\R^d_+$, and $\R\times\R_+$, which correspond to the Volterra Ornstein--Uhlenbeck, Volterra square-root, and Volterra Heston models, respectively. Additional proofs and supporting results are presented in the appendices. Our basic reference for the deterministic theory of Volterra equations is the excellent book by \cite{GLS:90}.

\subsection*{Notation}
Throughout the paper we view elements of $\R^m$ and $\C^m=\R^m+{\rm i}\R^m$ as column vectors, while elements of the dual spaces $(\R^m)^*$ and $(\C^m)^*$ are viewed as row vectors. For any matrix $A$ with complex entries, $A^\top$ denotes the (ordinary, not conjugate) transpose of $A$. The identity matrix is written $\id$. The symbol $|\fdot|$ is used to denote the Euclidean norm on $\C^m$ and $(\C^m)^*$, as well as the operator norm on $\R^{m\times n}$. We write $\S^m$ for the symmetric $m\times m$ matrices. The shift operator $\Delta_h$ with $h\ge0$, maps any function $f$ on $\R_+$ to the function $\Delta_h f$ given by
\[
\Delta_h f(t) = f(t+h).
\]
If the function $f$ on $\R_+$ is right-continuous and of locally bounded variation, the measure induced by its distributional derivative is denoted $df$, so that $f(t) = f(0) + \int_{[0,t]} df(s)$ for all $t\ge0$. By convention, $df$ does not charge $\{0\}$.

\section{Stochastic calculus of convolutions and resolvents} \label{S:conv}

For a measurable function $K$ on $\R_+$ and a measure $L$ on $\R_+$ of locally bounded variation, the convolutions $K*L$ and $L*K$ are defined by
\begin{equation} \label{K*L}
(K*L)(t) = \int_{[0,t]} K(t-s)L(ds), \qquad (L*K)(t) = \int_{[0,t]} L(ds)K(t-s)
\end{equation}
for $t>0$ whenever these expressions are well-defined, and extended to $t=0$ by right-continuity when possible. We allow $K$ and $L$ to be matrix-valued, in which case $K*L$ and $L*K$ may not both be defined (e.g.~due to incompatible matrix dimensions), or differ from each other even if they are defined (e.g.~if $K$ and $L$ take values among non-commuting square matrices). If $F$ is a function on $\R_+$, we write $K*F=K*(Fdt)$, that is,
\begin{equation} \label{K*F}
(K*F)(t) = \int_0^t K(t-s) F(s) ds.
\end{equation}
Further details can be found in \cite{GLS:90}, see in particular Definitions~2.2.1 and~3.2.1, as well as Theorems~2.2.2 and~3.6.1 for a number of properties of convolutions. In particular, if $K\in L^1_{\rm loc}(\R_+)$ and $F$ is continuous, then $K*F$ is again continuous.

Fix $d\in\N$ and let $M$ be a $d$-dimensional continuous local martingale. If $K$ is $\R^{m\times d}$-valued for some $m\in\N$, the convolution
\begin{equation} \label{K*dM_t}
(K*dM)_t = \int_0^t K(t-s)dM_s
\end{equation}
is well-defined as an It\^o integral for any $t\ge0$ that satisfies
\[
\int_0^t |K(t-s)|^2 d\tr\langle M\rangle_s<\infty.
\] 
In particular, if $K\in L^2_{\rm loc}(\R_+)$ and $\langle M\rangle_s=\int_0^s a_u du$ for some locally bounded process $a$, then \eqref{K*dM_t} is well-defined for every $t\ge0$. We always choose a version that is jointly measurable in $(t,\omega)$. Just like \eqref{K*L}--\eqref{K*F}, the convolution \eqref{K*dM_t} is associative, as the following result shows.

\begin{lemma}\label{L:assoc_dM}
Let $K\in L^2_{\rm loc}(\R_+,\R^{m\times d})$ and let $L$ be an $\R^{n\times m}$-valued measure on $\R_+$ of locally bounded variation. Let $M$ be a $d$-dimensional continuous local martingale with $\langle M\rangle_t=\int_0^t a_s ds$, $t\ge0$, for some locally bounded adapted process $a$. Then
\begin{equation} \label{L:assoc_dM_eq}
(L*(K*dM))_t = ((L*K)*dM)_t
\end{equation}
for every $t\ge0$. In particular, taking $F\in L^1_{\rm loc}(\R_+)$ we may apply \eqref{L:assoc_dM_eq} with $L(dt)=Fdt$ to obtain $(F*(K*dM))_t = ((F*K)*dM)_t$.
\end{lemma}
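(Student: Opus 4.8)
The plan is to recognize \eqref{L:assoc_dM_eq} as an instance of the stochastic Fubini theorem, which permits interchanging the Lebesgue--Stieltjes integral against the measure $L$ with the It\^o integral against $M$. Fixing $t\ge 0$ and unfolding the definitions \eqref{K*L} and \eqref{K*dM_t}, the left-hand side reads
\[
(L*(K*dM))_t = \int_{[0,t]} L(ds)\int_0^{t-s} K(t-s-u)\,dM_u ,
\]
while, using the definition of $L*K$, the right-hand side reads
\[
((L*K)*dM)_t = \int_0^t \left(\int_{[0,t-u]} L(ds)\,K(t-u-s)\right) dM_u .
\]
Both expressions amount to integrating the same integrand $L(ds)\,K(t-s-u)\,dM_u$ over the common triangular region $\{(s,u): s,u\ge 0,\ s+u\le t\}$, merely in the opposite order, so the content of the lemma is precisely that this order may be exchanged. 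Note that the matrix dimensions are consistent throughout: $L(ds)$ is $n\times m$, $K$ is $m\times d$, and $dM_u$ is $d$-dimensional, so both sides are $\R^n$-valued.

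Next I would set up the stochastic Fubini theorem with the finite measure space $([0,t],|L|)$, where $|L|$ is the total variation measure of $L$; this measure is finite on $[0,t]$ because $L$ is of locally bounded variation. For each fixed $s$ the integrand $u\mapsto K(t-s-u)\mathbf{1}_{\{u\le t-s\}}$ is deterministic, hence trivially predictable, and the chosen jointly measurable version of $(K*dM)$ guarantees the required joint measurability in $(s,\omega)$. The only substantive point is the integrability hypothesis validating the interchange, and here the key estimate uses $\langle M\rangle_u=\int_0^u a_r\,dr$ with $a$ locally bounded: on $[0,t]$ one has $d\tr\langle M\rangle_u=\tr(a_u)\,du$, so that
\[
\int_0^{t-s} |K(t-s-u)|^2\,d\tr\langle M\rangle_u \le \Big(\sup_{u\le t}\tr(a_u)\Big)\int_0^{t} |K(v)|^2\,dv < \infty \quad \text{a.s.},
\]
uniformly in $s\in[0,t]$, since $K\in L^2_{\rm loc}$. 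Integrating the square root of this bound against the finite measure $|L|$ yields an a.s.\ finite quantity, which is exactly the pathwise integrability condition required.

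Because $M$ is only a local martingale and $a$ only locally bounded, I would, if the cited version of stochastic Fubini requires genuine $L^2$-martingales, first localize: choose stopping times $\tau_n\uparrow\infty$ reducing $M$ to an $L^2$-martingale and making $a$ bounded on $[0,\tau_n]$, apply the theorem to $M^{\tau_n}$, and then pass to the limit using that both sides depend on $M$ only through its restriction to $[0,t]$ and agree with the unstopped versions on $\{\tau_n\ge t\}$, whose probability tends to $1$. The main obstacle is therefore not conceptual but a matter of careful bookkeeping: matching the domains of integration and the matrix dimensions, and verifying the precise hypothesis of whichever stochastic Fubini theorem is invoked, while checking that the localization does not disturb the identity at the fixed time $t$. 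The final assertion of the lemma then follows immediately by specializing to $L(dt)=F\,dt$ with $F\in L^1_{\rm loc}(\R_+)$.
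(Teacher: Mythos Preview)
Your proposal is correct and follows essentially the same route as the paper: both recognize \eqref{L:assoc_dM_eq} as an exchange of integration order and appeal to the stochastic Fubini theorem (the paper cites \citet[Theorem~2.2]{V:12}), verifying the required pathwise integrability via the bound $\max_{u\le t}|a_u|^{1/2}\|K\|_{L^2(0,t)}|L|([0,t])<\infty$ a.s. The only cosmetic differences are that the paper first reduces by linearity to the scalar case with $L$ a positive measure, and that your localization step is unnecessary for the version of stochastic Fubini invoked there.
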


\begin{proof}
By linearity it suffices to take $d=m=n=1$ and $L$ a locally finite positive measure. In this case,
\[
(L*(K*dM))_t = \int_0^t \left( \int_0^t \bm1_{\{u<t-s\}}K(t-s-u)dM_u \right) L(ds).
\]
Since
\[
\int_0^t \left( \int_0^t  \bm1_{\{u<t-s\}}K(t-s-u)^2 d\langle M\rangle_u \right)^{1/2} L(ds) \le \max_{0\le s\le t}|a_s|^{1/2} \| K\|_{L^2(0,t)} L([0,t]),
\]
which is finite almost surely, the stochastic Fubini theorem, see \citet[Theorem~2.2]{V:12}, yields
\[
(L*(K*dM))_t = \int_0^t \left( \int_0^t \bm1_{\{u<t-s\}}K(t-s-u)L(ds) \right) dM_u = ((L*K)*dM)_t,
\]
as required.
\end{proof}

Under additional assumptions on the kernel $K$ one can find a version of the convolution~\eqref{K*dM_t} that is continuous in~$t$. We will use the following condition:
\begin{equation} \label{K_gamma}
\begin{minipage}[c][4em][c]{.78\textwidth}
\begin{center}
$K\in L^2_{\rm loc}(\R_+,\R)$ and there is $\gamma\in(0,2]$ such that $\int_0^h K(t)^2dt = O(h^\gamma)$
and $\int_0^T (K(t+h)-K(t))^2 dt = O(h^\gamma)$ for every $T<\infty$.
\end{center}
\end{minipage}
\end{equation}

\begin{remark}
Other conditions than \eqref{K_gamma} have appeared in the literature. \cite{D:02} considers $dM=\sigma dW$ defined on the Wiener space with coordinate process $W$, and requires $F\mapsto K*F$ to be continuous from certain $L^p$ spaces to appropriate Besov spaces. \cite{MN:11} assume $K$ to be a function of smooth variation and $M$ to be a semimartingale. See also \cite[Theorem 1.3]{W:08}.
\end{remark}

\begin{example} \label{ex:kernels}
Let us list some examples of kernels that satisfy~\eqref{K_gamma}:
\begin{enumerate}
\item Locally Lipschitz kernels $K$ clearly satisfy~\eqref{K_gamma} with $\gamma=1$.
\item The fractional kernel $K(t)=t^{\alpha-1}$ with $\alpha\in(\frac12,1)$ satisfies~\eqref{K_gamma} with $\gamma=2\alpha-1$. Indeed, it is locally square integrable, and we have $\int_0^h K(t)^2dt=h^{2\alpha-1}/(2\alpha-1)$ as well as
\[
\int_0^T (K(t+h)-K(t))^2 dt \le h^{2\alpha-1} \int_0^\infty \left( (t+1)^{\alpha-1} - t^{\alpha-1}\right)^2 dt,
\]
where the constant on the right-hand side is bounded by $\frac{1}{2\alpha-1} + \frac{1}{3-2\alpha}$. Note that the case $\alpha\ge1$ falls in the locally Lipschitz category mentioned previously.
\item\label{ex:kernels:iii} If $K_1$ and $K_2$ satisfy~\eqref{K_gamma}, then so does $K_1+K_2$.
\item If $K_1$ satisfies~\eqref{K_gamma} and $K_2$ is locally Lipschitz, then $K=K_1K_2$ satisfies~\eqref{K_gamma} with the same $\gamma$. Indeed, letting $\|K_2^2\|_{\infty,T}$ denote the maximum of $K_2^2$ over $[0,T]$ and ${\rm Lip}_T(K_2)$ the best Lipschitz constant on $[0,T]$, we have
\[
\int_0^h K(t)^2dt \le \|K_2^2\|_{\infty,h} \int_0^h K_1(t)^2dt=O(h^\gamma)
\]
and
\begin{align*}
\int_0^T (K(t+h)-K(t))^2 dt &\le 2\|K_2^2\|_{\infty,T+h} \int_0^T(K_1(t+h)-K_1(t))^2 dt \\
&\quad + 2 \|K_1\|^2_{L^2(0,T)}{\rm Lip}_{T+h}(K_2)^2 h^2 \\
&=O(h^{\gamma}).
\end{align*}
\item\label{ex:kernels:v} If $K$ satisfies~\eqref{K_gamma} and $f\in L^2_{\rm loc}(\R_+)$, then $f*K$ satisfies~\eqref{K_gamma} with the same $\gamma$. Indeed, Young's inequality gives 
\[
\int_0^h (f*K)(t)^2dt\le\|f\|_{L^1(0,h)}^2\|K\|_{L^2(0,h)}^2=O(h^\gamma)
\] 
and, using also the Cauchy--Schwarz inequality,
\begin{align*}
\int_0^T ((f*K)(t+h)-(f*K)(t))^2dt &\le 2T\|f\|_{L^2(0,T+h)}^2\|K\|_{L^2(0,h)}^2 \\
&\quad + 2 \|f\|_{L^1(0,T)}^2\|\Delta_h K-K\|_{L^2(0,T)}^2 \\
&=O(h^\gamma).
\end{align*}
\item\label{ex:kernels:vi} If $K$ satisfies~\eqref{K_gamma} and is locally bounded on $(0,\infty)$, then $\Delta_\eta K$ satisfies~\eqref{K_gamma} for any $\eta>0$. Indeed, local boundedness gives $\|\Delta_\eta K\|_{L^2(0,h)}^2 = O(h)$ and it is immediate that
\[
\int_0^T (\Delta_\eta K(t+h)-\Delta_\eta K(t))^2dt\le \int_0^{T+\eta} (K(t+h)-K(t))^2dt = O(h^\gamma).
\]
\item By combining the above examples we find that, for instance, exponentially damped and possibly singular kernels like the Gamma kernel $K(t)=t^{\alpha-1}{\rm e}^{-\beta t}$ for $\alpha>\frac12$ and $\beta\ge0$ satisfy~\eqref{K_gamma}.
\end{enumerate}
\end{example}

\begin{lemma} \label{L:Holder_bound}
Assume $K$ satisfies \eqref{K_gamma} and consider a process $X=K*(bdt + dM)$, where $b$ is an adapted process and $M$ is a continuous local martingale with $\langle M\rangle_t=\int_0^t a_sds$ for some adapted process $a$. Let $T\ge0$ and $p>\max\{2,2/\gamma\}$ be such that $\sup_{t\le T} \E[ |a_t|^{p/2} + |b_t|^p ]$ is finite. Then $X$ admits a version which is H\"older continuous on $[0,T]$ of any order $\alpha<\gamma/2-1/p$. Denoting this version again by $X$, one has
\begin{equation} \label{eq:L:Holder_bound}
\E\left[ \left( \sup_{0\le s<t\le T} \frac{|X_t-X_s|}{|t-s|^\alpha} \right)^p \right] \le c\, \sup_{t\le T} \E[ |a_t|^{p/2} + |b_t|^p ]
\end{equation}
for all $\alpha\in[0,\gamma/2-1/p)$, where $c$ is a constant that only depends on $p$, $K$, and $T$. As a consequence, if $a$ and $b$ are locally bounded, then $X$ admits a version which is H\"older continuous of any order $\alpha<\gamma/2$.
\end{lemma}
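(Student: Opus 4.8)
The plan is to establish the two-parameter moment estimate
\[
\E\left[|X_t - X_s|^p\right] \le c\,|t-s|^{\gamma p/2}\,\sup_{u\le T}\E\left[|a_u|^{p/2} + |b_u|^p\right], \qquad 0\le s\le t\le T,
\]
and then to invoke the Garsia--Rodemich--Rumsey inequality to upgrade this into the existence of a H\"older version together with the quantitative seminorm bound \eqref{eq:L:Holder_bound}. Writing $h=t-s$ and decomposing $X=K*(b\,dt)+K*dM$, I would treat the drift and martingale parts separately, and in each split the increment into an \emph{overlap} term, which integrates the shifted kernel difference $\Delta_hK-K$ over $[0,s]$, and a \emph{new} term, which integrates $K$ over the fresh interval of length $h$. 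The point of this decomposition is that the two contributions are controlled by the two clauses of hypothesis \eqref{K_gamma}: the overlap term by $\int_0^T(\Delta_hK(r)-K(r))^2\,dr=O(h^\gamma)$ and the new term by $\int_0^hK(r)^2\,dr=O(h^\gamma)$.

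For the martingale part, after the change of variables $r=s-u$ the overlap increment is a stochastic integral of the form $\int_0^s(K(t-u)-K(s-u))\,dM_u$, to which I apply the Burkholder--Davis--Gundy inequality, bounding its $L^p$ norm by a constant times $\big\|\int_0^s(\Delta_hK-K)^2 a_u\,du\big\|_{L^{p/2}}^{1/2}$. The key step is to pull the random factor $a$ out of this norm using Minkowski's integral inequality in $L^{p/2}$, which yields $\sup_u\|a_u\|_{L^{p/2}}\int_0^s(\Delta_hK-K)^2\,dr$, and the deterministic integral is then $O(h^\gamma)$ by \eqref{K_gamma}. Raising to the $p$-th power and noting that $\big(\sup_u\|a_u\|_{L^{p/2}}\big)^{p/2}=\sup_u\E[|a_u|^{p/2}]$ gives the desired $O(h^{\gamma p/2})$ bound; the new term over $[s,t]$ is handled identically with $\int_0^hK^2=O(h^\gamma)$.

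For the drift part the same splitting applies, but Minkowski's integral inequality is now used directly in $L^p$ to bound $\big\|\int_0^s(\Delta_hK-K)\,b_{s-r}\,dr\big\|_{L^p}$ by $\sup_u\|b_u\|_{L^p}\,\|\Delta_hK-K\|_{L^1(0,s)}$, and an application of Cauchy--Schwarz converts the $L^1$ kernel norm into $T^{1/2}\|\Delta_hK-K\|_{L^2(0,T)}=O(h^{\gamma/2})$; the new term over $[s,t]$ contributes $O(h^{(1+\gamma)/2})$, which is of the same order up to the factor $T^{1/2}$. Collecting all four pieces yields the displayed moment estimate.

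Finally, the exponent bookkeeping works out exactly: since $p>2/\gamma$ the power $\gamma p/2$ exceeds $1$, so the moment estimate meets the hypothesis of Garsia--Rodemich--Rumsey with gauge $\rho(u)=u^{\alpha+2/p}$, and a direct computation shows that $B=\int_0^T\!\int_0^T |X_t-X_s|^p/|t-s|^{\alpha p+2}\,ds\,dt$ has finite expectation precisely when $\alpha<\gamma/2-1/p$; the pointwise bound $|X_t-X_s|\le C\,B^{1/p}|t-s|^\alpha$ then delivers both the H\"older version and, after taking expectations, the estimate \eqref{eq:L:Holder_bound}. I expect the main obstacle to lie in the martingale estimate: extracting the moment of $a$ from $\big\|\int(\Delta_hK-K)^2 a\,du\big\|_{L^{p/2}}$ relies on the convexity underlying Minkowski's inequality in $L^{p/2}$, which is transparent when $p\ge2$ (equivalently $\gamma\le1$, the range that covers the fractional and Lipschitz kernels of interest) and is the delicate point of the argument; the remaining effort is the careful matching of the overlap and new terms with the two clauses of \eqref{K_gamma} and the quantitative---rather than merely qualitative---passage through Garsia--Rodemich--Rumsey.
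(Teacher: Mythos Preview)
Your approach is essentially the same as the paper's: the same four-term splitting (overlap/new $\times$ drift/martingale), BDG plus a Jensen/Minkowski step to extract the moments of $a$ and $b$, and then a Kolmogorov-type criterion (the paper invokes Kolmogorov as in Revuz--Yor Theorem~I.2.1, which already yields the quantitative seminorm bound you obtain via Garsia--Rodemich--Rumsey). One small slip: $p\ge 2$ is not \emph{equivalent} to $\gamma\le 1$---rather, $\gamma\le 1$ forces $p>2$ through $p>2/\gamma$, while for $\gamma>1$ one can still take $p\ge 2$; the paper's proof likewise opens with ``For any $p\ge 2$'', so both arguments implicitly work in this range.
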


\begin{proof}
For any $p\ge2$ and any $s<t\le T<\infty$ we have
\begin{align*}
|X_t - X_s|^p &\le 4^{p-1} \left| \int_s^t K(t-u)b_u du \right|^p \\
&\quad + 4^{p-1} \left| \int_0^s \left( K(t-u) - K(s-u) \right) b_u du \right|^p \\
&\quad + 4^{p-1} \left| \int_s^t K(t-u)dM_u \right|^p \\
&\quad + 4^{p-1} \left| \int_0^s \left( K(t-u) - K(s-u) \right) dM_u \right|^p \\
&= 4^{p-1}\left(  {\bf I} + {\bf II} + {\bf III} + {\bf IV} \right).
\end{align*}
Jensen's inequality applied twice yields 
\[
 {\bf I}\le (t-s)^{p/2}\left( \int_s^{t} K(t-u)^2 du\right)^{p/2-1}\int_s^t  |b_u|^p\,K(t-u)^2 du.
\]
Taking expectations and changing variables we obtain
\begin{equation} \label{bound_I_new}
\E[\,{\bf I}\,] \le (t-s)^{p/2} \left( \int_0^{t-s} K(u)^2 du\right)^{p/2} \sup_{u\le T} \E[  |b_u|^p ].
\end{equation}
In a similar manner,
\begin{equation} \label{bound_II_new}
\E[\,{\bf II}\,] \le T^{p/2}\left( \int_0^s (K(u+t-s)-K(u))^2 du\right)^{p/2} \sup_{u\le T} \E[  |b_u|^p ].
\end{equation}
Analogous calculations relying also on the BDG inequalities applied to the continuous local martingale $\{\int_0^r K(t-u)dM_u\colon r\in[0,t]\}$ yield
\begin{equation} \label{bound_III_new}
\begin{aligned}
\E\left[ \,{\bf III}\, \right]
&\le C_p\,\E\left[ \left( \int_s^t K(t-u)^2 \,a_u\, du \right)^{p/2} \right] \\
&\le C_p\,\left( \int_0^{t-s} K(u)^2 du\right)^{p/2}  \sup_{u\le T} \E[  |a_u|^{p/2} ]
\end{aligned}
\end{equation}
and
\begin{equation} \label{bound_IV_new}
\E\left[ \,{\bf IV}\, \right] \le C_p\, \left( \int_0^s (K(u+t-s)-K(u))^2 du\right)^{p/2} \sup_{u\le T} \E[  |a_u|^{p/2} ].
\end{equation}
Combining \eqref{bound_I_new}--\eqref{bound_IV_new} with \eqref{K_gamma} leads to
\[
\E\left[|X_t - X_s|^p\right] \le c'\,\sup_{u\le T} \E[  |a_u|^{p/2} + |b_u|^p ]\,(t-s)^{\gamma p/2},
\]
where $c'$ is a constant that only depends on $p$, $K$, and $T$, but not on $s$ or $t$. Existence of a continuous version as well as the bound \eqref{eq:L:Holder_bound} now follow from the Kolmogorov continuity theorem; see \citet[Theorem~I.2.1]{RY:99}.

Finally, if $a$ and $b$ are locally bounded, consider stopping times $\tau_n\to\infty$ such that $a$ and $b$ are bounded on $\lc0,\tau_n\rc$. The process $X^n=K*(b\bm1_{\lc0,\tau_n\rc}dt+a\bm1_{\lc0,\tau_n\rc}dW)$ then has a H\"older continuous version of any order $\alpha<\gamma/2$ by the first part of the lemma, and one has $X_t=X^n_t$ almost surely on $\{t\le\tau_n\}$ for each $t$.
\end{proof}

Consider a kernel $K\in L^1_{\rm loc}(\R_+,\R^{d\times d})$. The {\em resolvent}, or {\em resolvent of the second kind}, corresponding to $K$ is the kernel $R\in L^1_{\rm loc}(\R_+;\R^{d\times d})$ such that
\begin{equation} \label{resolvent}
K*R = R*K = K - R.\footnote{Rather than \eqref{resolvent}, it is common to require $K*R=R*K=R-K$ in the definition of resolvent. We use \eqref{resolvent} to remain consistent with \cite{GLS:90}.}
\end{equation}
The resolvent always exists and is unique, and a number of properties such as (local) square integrability and continuity of the original kernel $K$ are inherited by its resolvent; see \citet[Theorems~2.3.1 and~2.3.5]{GLS:90}. Using the resolvent $R$ one can derive a variation of constants formula as shown in the following lemma.

\begin{lemma} \label{L:RX}
	Let $X$ be a continuous process, $F\colon\R_+ \to  \R^m$  a continuous function, $B \in \R^{d \times d}$ and  $Z=\int b\, dt + \int \sigma\, dW$ a continuous semimartingale with $b$ and $\sigma$ continuous and adapted. Then 
	\begin{equation*} 
	X = F + (KB)*X + K*dZ \qquad\Longleftrightarrow\qquad X= F - R_B*F + E_B*dZ,
	\end{equation*}
	where $R_B$ is the resolvent of $-KB$ and $E_B=K - R_B*K$.
\end{lemma}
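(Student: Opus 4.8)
The plan is to reduce the stochastic equation to a linear convolution equation of deterministic type and then invert it via the resolvent identity. Writing $g = F + K*dZ$, the left-hand equation reads $X = g + (KB)*X$, and I will show that the right-hand equation is nothing but $X = g - R_B*g$. The first task is therefore the algebraic identity $g - R_B*g = F - R_B*F + E_B*dZ$. Expanding $R_B*g = R_B*F + R_B*(K*dZ)$ and using $E_B = K - R_B*K$, this reduces to the commutation $R_B*(K*dZ) = (R_B*K)*dZ$. For the drift part this is ordinary Fubini, while the martingale part is exactly Lemma~\ref{L:assoc_dM} applied with $L(dt) = R_B(t)\,dt$, which is a locally finite measure since $R_B\in L^1_{\rm loc}$ (resolvents inherit local integrability from $K$). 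This is the only genuinely analytic step, and the continuity and local boundedness of $b$ and $\sigma$ (so that $a=\sigma\sigma^\top$ is locally bounded) together with the local square-integrability of $K$ are what license it.

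With this identity in hand the asserted equivalence becomes $X = g + (KB)*X \iff X = g - R_B*g$. For the forward implication I convolve the equation on the left by $R_B$ and invoke the right-convolution half of the resolvent relation, $R_B*(KB) = KB + R_B$ (read off from $R_B*(-KB) = -KB - R_B$); after cancelling the common term $R_B*X$ this yields $(KB)*X = -R_B*g$, and substituting back into $X = g + (KB)*X$ gives $X = g - R_B*g$. For the converse I insert $X = g - R_B*g$ into the right-hand side and use the left-convolution half $(KB)*R_B = KB + R_B$ to compute $(KB)*X = (KB)*g - ((KB)*R_B)*g = -R_B*g$, whence $g + (KB)*X = g - R_B*g = X$. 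Both halves of \eqref{resolvent} are needed precisely because the matrix kernels need not commute, and having the left and right resolvent identities simultaneously is exactly what makes the argument insensitive to this.

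The point requiring the most care is the uniform justification of the associativity rearrangements: the stochastic one $R_B*(K*dZ) = (R_B*K)*dZ$ through Lemma~\ref{L:assoc_dM}, and the iterated deterministic ones $R_B*((KB)*X) = (R_B*(KB))*X$ and $(KB)*(R_B*g) = ((KB)*R_B)*g$ through Fubini for convolutions of $L^1_{\rm loc}$ matrix kernels against the continuous process $X$. I would check along the way that every convolution appearing is well-defined and continuous in $t$, so that the cancellations are valid pointwise; once associativity is secured the remaining manipulations are purely algebraic and, in particular, require no separate uniqueness statement for the Volterra equation.
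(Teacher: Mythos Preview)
Your proposal is correct and essentially follows the paper's own proof: both directions are obtained by convolving with $R_B$ (forward) and with $KB$ (converse), invoking the resolvent identity \eqref{resolvent} together with Lemma~\ref{L:assoc_dM} for the stochastic associativity $R_B*(K*dZ)=(R_B*K)*dZ$. Your introduction of $g=F+K*dZ$ is a cosmetic repackaging that isolates the purely algebraic equivalence $X=g+(KB)*X\iff X=g-R_B*g$ from the identification $g-R_B*g=F-R_B*F+E_B*dZ$, but the substance is the same.
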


\begin{proof}
	Assume that $X=F+(KB)*X + K*dZ$. Convolving this with $R_B$ and using Lemma~\ref{L:assoc_dM} yields
	\begin{align*}
	X-R_B*X &= \big(F- R_B*F)  + \big( KB - R_B*(KB)\big)*X + E_B*dZ.
	\end{align*}
	The resolvent equation \eqref{resolvent} states that $KB - R_B*(KB)=-R_B$, so that
     \begin{align}\label{L:tempEB}
     	X = F - R_B*F  + E_B*dZ.
     \end{align}
	Conversely, assume that \eqref{L:tempEB} holds.  It follows from the resolvent equation \eqref{resolvent} that  $KB - (KB)*R_B=-R_B$  and 
	$$(K B)*E_B= (KB)*(K-R_B*K) = -R_B*K.$$
	Hence, convolving both sides of \eqref{L:tempEB} with $KB$ and using Lemma \ref{L:assoc_dM} yields 
	\begin{align*}
	X- (KB)*X &= F + \left(- R_B - KB + (KB)*R_B \right)*F \\
	&\quad+ (E_B- (KB)*E_B )*dZ \\
	&= F + (E_B +R_B*K)*dZ\\
	&=  F   + K*dZ,
	\end{align*} 
	which proves that  $X = F + (KB)*X + K*dZ$.	
\end{proof}

Another object related to $K$ is its {\em resolvent of the first kind}, which is an $\R^{d\times d}$-valued measure $L$ on $\R_+$ of locally bounded variation such that
\begin{equation} \label{res_L}
K*L = L*K \equiv \id,
\end{equation}
see \citet[Definition~5.5.1]{GLS:90}. We recall that $\id$ stands for the identity matrix. Some examples of resolvents of the first and second kind are presented in Table \ref{T:summary}. A resolvent of the first kind does not always exist. When it does, it has the following properties, which play a key role in several of our arguments.

\begin{lemma} \label{L:ZX}
Let $X$ be a continuous process and $Z=\int b\, dt + \int \sigma\, dW$ a continuous semimartingale with $b$, $\sigma$, and $K*dZ$ continuous and adapted. Assume that $K$ admits a resolvent of the first kind $L$. Then 
\begin{equation} \label{L:ZX:1}
X-X_0 = K*dZ \qquad\Longleftrightarrow\qquad L*(X-X_0)=Z.
\end{equation}
In this case, for any $F\in L^2_{\rm loc}(\R_+,\C^{m\times d})$ such that $F*L$ is right-continuous and of locally bounded variation, one has
\begin{equation} \label{L:ZX:2}
F*dZ = (F*L)(0)X - (F*L)X_0 + d(F*L)*X
\end{equation}
up to $dt\otimes\P$-a.e.\ equivalence. If $F*dZ$ has a right-continuous version, then with this version \eqref{L:ZX:2} holds up to indistinguishability.
\end{lemma}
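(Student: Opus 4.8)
The plan is to handle the two displays \eqref{L:ZX:1} and \eqref{L:ZX:2} in turn, in each case exploiting the defining relation $K*L=L*K\equiv\id$ of the resolvent of the first kind together with the associativity Lemma~\ref{L:assoc_dM}. Here $\id$ is read as the constant identity-matrix-valued function, so that convolution against it is integration: $(\id*g)(t)=\int_0^t g(s)\,ds$. For the forward implication in \eqref{L:ZX:1} I would convolve $X-X_0=K*dZ$ on the left with the measure $L$ and apply Lemma~\ref{L:assoc_dM}, obtaining $L*(X-X_0)=(L*K)*dZ=\id*dZ=Z$, the last equality using $Z_0=0$. For the converse I would introduce the candidate $\tilde X:=X_0+K*dZ$, which is continuous by hypothesis and, by the forward implication, satisfies $L*(\tilde X-X_0)=Z$. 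Subtracting from the assumption $L*(X-X_0)=Z$ gives $L*(X-\tilde X)=0$; convolving this with $K$ and using deterministic associativity together with $K*L\equiv\id$ yields $\int_0^t(X_s-\tilde X_s)\,ds=0$ for all $t$, and continuity of $X-\tilde X$ then forces $X=\tilde X$, which is \eqref{L:ZX:1}.

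For \eqref{L:ZX:2}, write $Y:=X-X_0$ and $G:=F*L$, so that $Y_0=0$, $Z=L*Y$, and (in the case at hand) $Y=K*dZ$. I would first simplify the right-hand side by substituting $X=X_0+Y$ and using $(dG*X_0)_t=(G(t)-G(0))X_0$: the three contributions proportional to $X_0$ telescope, reducing \eqref{L:ZX:2} to the cleaner identity
\begin{equation*}
(F*dZ)_t = G(0)\,Y_t + (dG*Y)_t .
\end{equation*}
Into its right-hand side I substitute $Y=K*dZ$ and apply Lemma~\ref{L:assoc_dM} to the measure $dG$ (with the drift part handled by ordinary Fubini), obtaining $(dG*Y)_t=((dG*K)*dZ)_t$, while $G(0)Y_t=((G(0)K)*dZ)_t$ trivially. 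Thus the right-hand side equals $((G(0)K+dG*K)*dZ)_t$, and the whole of \eqref{L:ZX:2} comes down to the purely deterministic kernel identity
\begin{equation*}
G(0)K + dG*K = F \qquad\text{in } L^1_{\rm loc}.
\end{equation*}

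This last identity is the heart of the matter and the step I expect to require the most care, since it is precisely where the resolvent of the first kind must be inverted. I would verify it by convolving its left-hand side on the right with $L$: using $K*L\equiv\id$ gives $(G(0)K)*L\equiv G(0)$ and $(dG*K)*L=dG*\id=G-G(0)$, whose sum is $G=F*L$. Hence $(G(0)K+dG*K-F)*L=0$, and convolving once more with $K$ and invoking $L*K\equiv\id$ collapses this to $\int_0^t(G(0)K+dG*K-F)(s)\,ds=0$ for all $t$, giving the identity $dt$-a.e. The delicate points are the consistent bookkeeping of the (non-commutative) matrix orderings, with $F$ of size $m\times d$ and $K,L$ of size $d\times d$, and the repeated appeals to deterministic associativity, which require $dG*K\in L^1_{\rm loc}$; the latter follows from Young's inequality since $G$ is locally of bounded variation and $K\in L^1_{\rm loc}$.

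Finally, the $dt\otimes\P$-a.e. claim follows because the kernel identity holds only a.e., so the two stochastic convolutions against $dZ$ coincide for each fixed $t$ almost surely, hence $dt\otimes\P$-a.e. by Fubini. For the closing assertion I would observe that the right-hand side of \eqref{L:ZX:2} is right-continuous in $t$: the map $t\mapsto G(0)X_t$ is continuous, $t\mapsto G(t)X_0$ is right-continuous as $G$ is, and, since $X$ is continuous and $dG$ is locally finite, $dG*X$ is right-continuous. Therefore, if $F*dZ$ admits a right-continuous version, both sides of \eqref{L:ZX:2} are right-continuous and agree $dt\otimes\P$-a.e., and two right-continuous processes agreeing almost everywhere are indistinguishable.
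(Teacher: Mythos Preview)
Your proposal is correct and follows essentially the same approach as the paper: both directions of \eqref{L:ZX:1} are obtained by convolving with $L$ (respectively $K$) and using $L*K=K*L\equiv\id$ together with continuity, and \eqref{L:ZX:2} reduces in both cases to the kernel identity $F=(F*L)(0)K+d(F*L)*K$ a.e., after which one substitutes $K*dZ=X-X_0$ via Lemma~\ref{L:assoc_dM}. The only cosmetic difference is that the paper derives this kernel identity directly by convolving the decomposition $F*L=(F*L)(0)+d(F*L)*\id$ with $K$, whereas you verify it by convolving the candidate identity with $L$ and then with $K$; both routes are valid and equally short.
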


\begin{proof}
Assume $X-X_0 = K*dZ$. Apply $L$ to both sides to get
\[
L*(X-X_0)=L*(K*dZ)=(L*K)*dZ=\id*dZ=Z,
\]
where the second equality follows from Lemma~\ref{L:assoc_dM}. This proves the forward implication in \eqref{L:ZX:1}. Conversely, assume $L*(X-X_0)=Z$. Then,
\begin{align*}
\id*(X-X_0) &= (K*L)*(X-X_0) \\
&= K*(L*(X-X_0)) \\
&= K*Z \\
&= K*(\id*dZ) \\
&= \id*(K*dZ),
\end{align*}
using \citet[Theorem~3.6.1(ix)]{GLS:90} for the second equality and Lemma~\ref{L:assoc_dM} for the last equality.
Since both $X-X_0$ and $K*dZ$ are continuous, they must be equal.

To prove \eqref{L:ZX:2}, observe that the assumption of right-continuity and locally bounded variation entails that
\[
F*L = (F*L)(0) + d(F*L)*\id.
\]
Convolving this with $K$, using associativity of the convolution and \eqref{res_L}, and inspecting the densities of the resulting absolutely continuous functions, we get
\[
F = (F*L)(0)K + d(F*L)*K \quad \text{a.e.}
\]
Using \eqref{L:assoc_dM_eq} and the fact that $K*dZ=X-X_0$ by assumption, it follows that
\begin{align*}
F * dZ &= (F*L)(0)K*dZ + d(F*L)*(K*dZ) \\
&= (F*L)(0)X - (F*L)X_0 + d(F*L)*X
\end{align*}
holds $dt\otimes\P$-a.e., as claimed. The final statement is clear from right-continuity of $F*L$ and $d(F*L)*X$.
\end{proof}

\begin{table}[h!]
\centering
\begin{tabular}{c c c c }
\hline\hline
& $K(t)$ & $R(t)$ & $L(dt)$ \\ 
\hline \hline \\[0.5ex]
Constant		& $c$ & $c{\rm e}^{-ct}$ & $c^{-1} \delta_0(dt)$\\ \\
Fractional		& $c\,\frac{t^{\alpha-1}}{\Gamma(\alpha)}$ & $ct^{\alpha-1}  E_{\alpha, \alpha} (-ct^{\alpha})$ & $c^{-1}\,\frac{t^{-\alpha}}{\Gamma(1-\alpha)}dt$\\ \\
Exponential	& $c{\rm e}^{-\lambda t}$ & $c{\rm e}^{-\lambda t}{\rm e}^{-ct}$ & $c^{-1}(\delta_0(dt)  + \lambda\,dt)$\\ \\
Gamma		& $c{\rm e}^{-\lambda t} \frac{t^{\alpha-1}}{\Gamma(\alpha)} $ & $ c{\rm e}^{- \lambda t}t^{\alpha-1}  E_{\alpha, \alpha} (-ct^{\alpha})$  & $c^{-1}\,\frac{1}{\Gamma(1-\alpha)}{\rm e}^{-\lambda t} \frac{d}{dt}(t^{-\alpha} \ast {\rm e}^{\lambda t})(t)dt$ \\\\
\hline
\end{tabular}
\caption{Some kernels $K$ and their resolvents $R$ and $L$ of the second and first kind. Here $E_{\alpha,\beta}(z)=\sum_{n=0}^\infty \frac{z^n}{\Gamma(\alpha n+\beta)}$ denotes the Mittag--Leffler function, and the constant $c$ may be an invertible matrix.}
\label{T:summary}
\end{table}

\section{Stochastic Volterra equations} \label{S:SVE}

Fix $d\in\N$ and consider the stochastic Volterra equation \eqref{SVE} for a given kernel $K\in L^2_{\rm loc}(\R_+,\R^{d\times d})$, initial condition $X_0\in\R^d$, and coefficients $b\colon \R^d\to\R^d$ and $\sigma\colon\R^d\to\R^{d\times m}$, where $W$ is $m$-dimensional Brownian motion. The equation \eqref{SVE} can be written more compactly as
\[
X = X_0 + K * (b(X)dt + \sigma(X)dW).
\]
We will always require the coefficients $b$ and $\sigma$ as well as solutions of~\eqref{SVE} to be continuous in order to avoid problems with the meaning of the stochastic integral term. As for stochastic (ordinary) differential equations, we say that the stochastic Volterra equation \eqref{SVE} admits a weak solution if there exists a stochastic basis $(\Omega,\mathcal F,(\mathcal F_t)_{t \geq 0},\mathbb P)$, satisfying the usual conditions and supporting a $d$-dimensional Brownian motion $W$ and an adapted continuous process $X$ such that \eqref{SVE} holds. In this case, by an abuse of terminology, we call $X$ a weak solution of~\eqref{SVE}. We call $X$ a strong solution if in addition it is adapted to the filtration generated by $W$.

The following moment bound holds for any solution of \eqref{SVE} under linear growth conditions on the coefficients.

\begin{lemma} \label{L:moment bound}
Assume $b$ and $\sigma$ are continuous and satisfy the linear growth condition
\begin{equation} \label{LG}
| b(x) | \vee | \sigma(x) |  \le c_{\rm LG} (1 + |x|), \qquad x\in\R^d,
\end{equation}
for some constant $c_{\rm LG}$. Let $X$ be a continuous solution of \eqref{SVE} with initial condition $X_0\in\R^d$. Then for any $p\ge2$ and $T<\infty$ one has
\[
\sup_{t\le T} \E[ |X_t |^p ] \le c
\]
for some constant $c$ that only depends on $|X_0|$, $K|_{[0,T]}$, $c_{\rm LG}$, $p$ and $T$.
\end{lemma}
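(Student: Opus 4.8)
The plan is to derive a linear convolution inequality for $f(t)=\E[|X_t|^p]$ and close it with a generalized Grönwall lemma for kernels in $L^1_{\rm loc}$. The first issue to address is that $f$ is not a priori known to be finite, let alone locally bounded: \eqref{SVE} only guarantees that $X$ is continuous, hence pathwise locally bounded, so the naive inequality could be vacuous. I would therefore localize. Set $\tau_n=\inf\{t\ge0:|X_t|\ge n\}$ (so $\tau_n\to\infty$ a.s.\ by continuity of $X$) and let $Y^n$ be the process obtained by replacing $b(X_s)$ and $\sigma(X_s)$ in \eqref{SVE} with the truncated coefficients $b(X_s)\1{s\le\tau_n}$ and $\sigma(X_s)\1{s\le\tau_n}$. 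Because these are bounded by $c_{\rm LG}(1+n)$, each $f_n(t):=\E[|Y^n_t|^p]$ is finite and, from the crude explicit bound, $\sup_{t\le T}f_n(t)<\infty$. Note that stopping the \emph{coefficients} rather than $X$ itself is what respects the convolution structure: for $t<\tau_n$ one has $Y^n_t=X_t$, hence $Y^n_t\to X_t$ a.s., so a Fatou argument at the end will transfer a uniform-in-$n$ bound on $f_n$ back to $f$.

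Next I would estimate $f_n$. Writing $|Y^n_t|^p\le 3^{p-1}\big(|X_0|^p+|\text{drift}|^p+|\text{stoch.\ int.}|^p\big)$, I would bound the drift term by Jensen's inequality against the probability measure $\kappa(t-s)\,ds/\|\kappa\|_{L^1(0,t)}$, where $\kappa=|K|\in L^1_{\rm loc}$, and the stochastic integral by the Burkholder--Davis--Gundy inequality (see \cite{RY:99}) applied to the martingale $r\mapsto\int_0^r K(t-s)\sigma(X_s)\1{s\le\tau_n}\,dW_s$, followed by a second Jensen step against $\kappa(t-s)^2\,ds/\|\kappa\|_{L^2(0,t)}^2$; here $p\ge2$ is used so that $x\mapsto x^{p/2}$ is convex. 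Using the linear growth \eqref{LG} together with the identity $b(X_s)\1{s\le\tau_n}=b(Y^n_s)\1{s\le\tau_n}$ (valid $dt$-a.e.\ since $X=Y^n$ before $\tau_n$), and likewise for $\sigma$, this produces an inequality of the form $f_n(t)\le a(t)+C\int_0^t g(t-s)\,f_n(s)\,ds$, with $g=\kappa+\kappa^2\in L^1_{\rm loc}$ nonnegative, and with $a$ locally bounded and $C$ constant. Crucially, $a$, $C$, and $g$ do not depend on $n$: the inhomogeneous part collects only $|X_0|^p$ and the ``$1$'' from the linear growth, while $\|\kappa\|_{L^1(0,t)}$ and $\|\kappa\|_{L^2(0,t)}$ are bounded on $[0,T]$ by $K\in L^2_{\rm loc}$.

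Finally I would apply a generalized Grönwall inequality for convolution kernels (see \cite{GLS:90}). Since $g\ge0$, its resolvent $r$ is nonnegative, and local boundedness of $f_n$ makes the estimate non-vacuous, giving $f_n(t)\le a(t)+\int_0^t r(t-s)\,a(s)\,ds$ and hence $\sup_{t\le T}f_n(t)\le c$ with $c$ depending only on $|X_0|$, $K|_{[0,T]}$, $c_{\rm LG}$, $p$, $T$, and \emph{not} on $n$. Fatou's lemma then yields $\E[|X_t|^p]\le\liminf_n\E[|Y^n_t|^p]\le c$ for every $t\le T$, which is the claim.

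I expect the main obstacle to be precisely the a priori integrability and local boundedness required to make the Grönwall step meaningful, which is what forces the localization; the subtlety is that the convolution precludes simply stopping $X$, so one must stop the coefficients instead and then take care that all constants emerging from the Jensen/BDG estimates are uniform in $n$ so that Fatou delivers a clean bound. The remaining ingredients --- the moment estimates and the singular-kernel Grönwall --- are then routine.
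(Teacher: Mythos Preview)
Your proposal is correct and follows essentially the same approach as the paper: localize via stopping times $\tau_n$, derive a linear convolution inequality for the localized $p$th moments using Jensen and BDG, close it with a generalized Gr\"onwall lemma for Volterra kernels from \cite{GLS:90}, and pass to the limit by Fatou. The only cosmetic differences are that the paper works directly with $f_n(t)=\E[|X_t|^p\bm1_{\{t<\tau_n\}}]$ rather than introducing an auxiliary process $Y^n$, and bounds both the drift and the stochastic integral against the single kernel $c'|K|^2$ (so the convolution inequality reads $f_n\le c'+c'|K|^2*f_n$) rather than your $|K|+|K|^2$.
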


\begin{proof}
Let $\tau_n = \inf\{t\ge0\colon |X_t| \ge n\}\wedge T$, and observe that
\begin{equation}\label{eq_L_moment_bound_1}
|X_t |^p\bm1_{\{t<\tau_n\}}\le \left| X_0 +  \int_0^t K(t-s) \Big( b(X_s\bm1_{\{s<\tau_n\}})ds + \sigma(X_s\bm1_{\{s<\tau_n\}})dW_s\Big)\right|^p.
\end{equation}
Indeed, for $t\ge\tau_n$ the left-hand side is zero while the right-hand side is nonnegative. For $t<\tau_n$, the local behavior of the stochastic integral (see e.g.\ \citet[Corollary of Theorem~II.18]{pro_04}) implies that the right-hand side is equal to
\[
\left| X_0 +  \int_0^t K(t-s) \Big( b(X_s)ds + \sigma(X_s)dW_s\Big)\right|^p.
\]
This in turn equals $|X_t |^p$ since $X$ is assumed to be a solution of \eqref{SVE}. We deduce that \eqref{eq_L_moment_bound_1} holds.

Starting from \eqref{eq_L_moment_bound_1}, we argue as in the proof of Lemma~\ref{L:Holder_bound}. The Jensen and BDG inequalities combined with the linear growth condition~\eqref{LG} yield that the expectations $f_n(t)=\E[|X_t|^p\bm1_{\{t<\tau_n\}}]$ satisfy the inequality
\[
f_n \le c' + c' |K|^2 * f_n
\]
on $[0,T]$ for some constant $c'$ that only depends on $|X_0|$, $\|K\|_{L^2(0,T)}$, $c_{\rm LG}$, $p$ and $T$. Consider now the scalar non-convolution kernel $K'(t,s)=c'|K(t-s)|^2\bm1_{s\le t}$. This is a Volterra kernel in the sense of \citet[Definition~9.2.1]{GLS:90}, and  for any interval $[u,v]\subset\R_+$, Young's inequality implies that
	\begin{align}\label{E:K'L1}
	\vertiii{K'}_{L^1(u,v)} \le c'\| K \|_{L^2(0,v-u)},
	\end{align}
	where $\vertiii{\fdot}_{L^1(u,v)}$ is defined in \citet[Definition~9.2.2]{GLS:90}.
	Thus $-K'$ is of type $L^1$ on $(0,T)$. Next, we show that $-K'$ admits a resolvent of type $L^1$ on $(0,T)$ in the sense of \citet[Definition~9.3.1]{GLS:90}. For $v-u$ sufficiently small, the right-hand side in \eqref{E:K'L1} is smaller than $1$, whence $\iii K' \iii_{L^1(u,v)}<1$. We now apply \citet[Corollary~9.3.14]{GLS:90} to obtain a resolvent of type $L^1$ on $(0,T)$ of $-K'$, which we denote by $R'$. Since $-K'$ is a convolution kernel, so is $R'$. Since also $-c' |K|^2$ is nonpositive, it follows from  \citet[Proposition~9.8.1]{GLS:90} that $R'$ is also nonpositive. The Gronwall type inequality in \citet[Lemma~9.8.2]{GLS:90} then yields $f_n(t) \le c'(1 - (R'*1)(t))\le c'(1-(R'*1)(T))$ for $t\in[0,T]$. Sending $n$ to infinity and using Fatou's lemma completes the proof.
\end{proof}

\begin{remark} \label{R:moment bound}
It is clear from the proof that the conclusion of Lemma~\ref{L:moment bound} holds also for state and time-dependent predictable coefficients $b(x,t,\omega)$ and $\sigma(x,t,\omega)$, provided they satisfy a linear growth condition uniformly in $(t,\omega)$, that is,
\[
| b(x,t,\omega) | \vee | \sigma(x,t,\omega) |  \le c_{\rm LG} (1 + |x|), \qquad x\in\R^d,\ t\in\R_+,\ \omega\in\Omega,
\]
for some constant $c_{\rm LG}$.
\end{remark}

The following existence results can be proved using techniques based on classical methods for stochastic differential equations; the proofs are given in Section~\ref{S:proof_existence}.
\begin{theorem} \label{T:sol_lip}
Assume $b$ and $\sigma$ are Lipschitz continuous and the components of $K$ satisfy~\eqref{K_gamma}. Then \eqref{SVE} admits a unique continuous strong solution $X$ for any initial condition $X_0\in\R^d$.
\end{theorem}

\begin{theorem} \label{T:EXISTENCE}
Assume that $K$ admits a resolvent of the first kind, that the components of $K$ satisfy \eqref{K_gamma}, and that $b$ and $\sigma$ are continuous and satisfy the linear growth condition \eqref{LG}. Then \eqref{SVE} admits a continuous weak solution for any initial condition $X_0\in\R^d$.
\end{theorem}

\begin{remark}
At the cost of increasing the dimension, \eqref{SVE} also covers the superficially different equation $X=X_0+K_1*(b(X)dt) + K_2*(\sigma(X)dW)$ where the drift and diffusion terms are convolved with different kernels $K_1$ and $K_2$. Indeed, if one defines
\[
\widetilde K = \begin{pmatrix} K_1 & K_2 \\ 0 & K_2 \end{pmatrix}, \qquad \widetilde b(x,y) = \begin{pmatrix} b(x) \\ 0 \end{pmatrix}, \qquad \widetilde \sigma(x,y) = \begin{pmatrix} 0 & \sigma(x) \\ 0 & 0 \end{pmatrix},
\]
and obtains a solution $Z=(X,Y)$ of the equation $Z=Z_0 + \widetilde K*(\widetilde b(Z)dt + \widetilde\sigma(Z)d\widetilde W)$ in $\R^{2d}$, where $Z_0=(X_0,0)$ and $\widetilde W=(W',W)$ is a $2d$-dimensional Brownian motion, then $X$ is a solution of the original equation of interest. If $K_1$ and $K_2$ admit resolvents of the first kind $L_1$ and $L_2$, then
\[
\widetilde L = \begin{pmatrix} L_1 & -L_1 \\ 0 & L_2 \end{pmatrix}
\]
is a resolvent of the first kind of $\widetilde K$, and Theorem~\ref{T:EXISTENCE} is applicable.
\end{remark}

Our next existence result is more delicate, as it involves an assertion about stochastic invariance of the nonnegative orthant $\R^d_+$. This forces us to impose stronger conditions on the kernel $K$ along with suitable boundary conditions on the coefficients $b$ and $\sigma$. We note that any nonnegative and non-increasing kernel that is not identically zero admits a resolvent of the first kind; see \citet[Theorem~5.5.5]{GLS:90}.

\begin{theorem} \label{T:existence orthant}
Assume that $K$ is diagonal with scalar kernels $K_i$ on the diagonal that satisfy \eqref{K_gamma} as well as
\begin{equation} \label{eq:K orthant}
\begin{minipage}[c][4em][c]{.8\textwidth}
\begin{center}
$K_i$ is nonnegative, not identically zero, non-increasing and continuous on $(0,\infty)$, and its resolvent of the first kind $L_i$ is nonnegative and non-increasing in that $s\mapsto L_i([s,s+t])$ is non-increasing for all $t\ge0$.
\end{center}
\end{minipage}
\end{equation}
Assume also that $b$ and $\sigma$ are continuous and satisfy the linear growth condition \eqref{LG} along with the boundary conditions
\[
\text{$x_i=0$ implies $b_i(x)\ge0$ and $\sigma_i(x)=0$,}
\]
where $\sigma_i(x)$ is the $i$th row of $\sigma(x)$. Then \eqref{SVE} admits an $\R^d_+$-valued continuous weak solution for any initial condition $X_0\in\R^d_+$.
\end{theorem}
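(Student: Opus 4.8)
The only genuinely new content beyond Theorem~\ref{T:existence} is the invariance of $\R^d_+$, so the plan is to construct a \emph{particular} weak solution that never leaves the orthant, by approximation. Since a nonnegative solution only ever evaluates $b$ and $\sigma$ on $\R^d_+$, I am free to modify the coefficients outside $\R^d_+$: first extend $b,\sigma$ to globally defined continuous coefficients $\bar b,\bar\sigma$ with the same linear growth constant and the same boundary behavior, arranged so that $\bar\sigma_i(x)=0$ and $\bar b_i(x)\ge 0$ whenever $x_i\le 0$ (continuity across $\{x_i=0\}$ holds by the boundary hypotheses). Because $K$ is diagonal with each $K_i$ nonnegative, non-increasing and not identically zero, \citet[Theorem~5.5.5]{GLS:90} guarantees that each $K_i$ admits a resolvent of the first kind $L_i$, and $L=\diag(L_1,\dots,L_d)$ is then a resolvent of the first kind for $K$; thus Theorem~\ref{T:existence} applies and yields weak solutions of all the auxiliary equations below.

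Next I would set up the approximation. For $\varepsilon>0$ replace $\bar b$ by $\bar b+\varepsilon\mathbf{1}$ so that the drift points \emph{strictly} inward at the boundary ($\bar b_i+\varepsilon\ge\varepsilon>0$ on $\{x_i=0\}$), and, if convenient, mollify $\bar b,\bar\sigma$ to a locally Lipschitz family while preserving the uniform linear growth bound and the boundary conditions. Denote by $X^\varepsilon$ a weak solution of the resulting equation, which exists by Theorem~\ref{T:existence}. The heart of the matter is then the \textbf{invariance claim}: each $X^\varepsilon$ takes values in $\R^d_+$.

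To prove the invariance claim I would work componentwise and exploit the resolvent of the first kind. Convolving the $i$th component equation with $L_i$ as in Lemma~\ref{L:ZX} transforms the nonlocal identity $X^{\varepsilon,i}-X^i_0=K_i*dZ^i$ into $L_i*(X^{\varepsilon,i}-X^i_0)=Z^i$, where $Z^i=\int(\bar b_i(X^\varepsilon)+\varepsilon)\,dt+\int\bar\sigma_i(X^\varepsilon)\,dW$. On the set $\{X^{\varepsilon,i}\le 0\}$ the boundary conditions force $dZ^i=(\bar b_i(X^\varepsilon)+\varepsilon)\,dt\ge \varepsilon\,dt$, so the driving signal is strictly increasing there while the diffusion switches off. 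The difficulty — and the main obstacle of the whole theorem — is that the memory in the convolution allows drift accumulated while $X^{\varepsilon,i}>0$ (where no sign constraint holds) to feed, through $K_i$, into later times and conceivably push the component negative. This is precisely where the two monotonicity hypotheses in \eqref{eq:K orthant} are indispensable: $K_i$ nonnegative and non-increasing, together with $L_i$ nonnegative and $s\mapsto L_i([s,s+t])$ non-increasing, let one run a first-passage argument at the last zero preceding a hypothetical negative excursion and bound the adverse past contribution by a nonnegative quantity, contradicting the strict inward drift $\varepsilon>0$. I expect this step to demand the most care, likely requiring an approximation of the measure $L_i$ by absolutely continuous ones and an integration by parts against its non-increasing profile.

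Finally I would pass to the limit $\varepsilon\downarrow 0$ (and remove any mollification). Lemma~\ref{L:moment bound} together with Remark~\ref{R:moment bound} gives moment bounds $\sup_{t\le T}\E[|X^\varepsilon_t|^p]$ uniform in $\varepsilon$, and Lemma~\ref{L:Holder_bound} upgrades these to uniform Hölder estimates, so that the laws of $(X^\varepsilon,W)$ are tight in $C([0,T],\R^d)\times C([0,T],\R^m)$ by the Kolmogorov criterion. Along a subsequence I pass to the limit via the Skorokhod representation theorem; continuity of $b,\sigma$ together with convergence of the convolution integrals (using the stochastic Fubini machinery behind Lemma~\ref{L:assoc_dM}) identifies the limit $X$ as a weak solution of \eqref{SVE}, and since each $X^\varepsilon$ is $\R^d_+$-valued and the orthant is closed, so is $X$. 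This produces the desired continuous weak solution in $\R^d_+$ for every $X_0\in\R^d_+$.
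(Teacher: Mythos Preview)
Your overall architecture (approximate, prove invariance for the approximants, pass to the limit via tightness) matches the paper, and your limit step is essentially the content of Lemmas~\ref{L:tight} and~\ref{L:convergence}. However, the heart of the theorem is precisely your ``invariance claim'', and here the proposal has a genuine gap.

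First, the approximation: adding $\varepsilon$ to the drift while only forcing $\bar\sigma_i(x)=0$ on $\{x_i\le 0\}$ does not create the buffer you need. If $\tau$ is the first exit time and $X^{\varepsilon}_{i,\tau}=0$, then for small $h>0$ the trajectory $X^{\varepsilon}_{i,\tau+h}$ may oscillate across zero, and on the positive excursions $\bar\sigma_i$ need not vanish. Hence the local contribution $\int_0^h K_i(h-s)\,\bar\sigma_i(X^\varepsilon_{\tau+s})\,dW_{\tau+s}$ has no sign, and the strict inward drift $\varepsilon>0$ cannot by itself prevent a negative excursion. The paper instead takes $b^n(x)=b((x-n^{-1})^+)$ and $\sigma^n(x)=\sigma((x-n^{-1})^+)$, so that $x_i\le n^{-1}$ already forces $\sigma^n_i(x)=0$ and $b^n_i(x)\ge 0$; by continuity this kills the diffusion in an entire time-neighborhood after $\tau$, and only the nonnegative drift survives there.

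Second, and more importantly, the memory term. You correctly identify that past increments of $Z^i$, accumulated while $X^{\varepsilon,i}>0$, feed through $K_i$ into the present and could push the component negative, and you say the monotonicity hypotheses \eqref{eq:K orthant} should control this via ``a first-passage argument at the last zero'' and ``integration by parts against the non-increasing profile of $L_i$''. This is where the proof lives, and the proposal does not supply the mechanism. The paper's device is to split
\[
X_{i,\tau+h}=X_{i,0}+(\Delta_h K_i*dZ_i)_\tau+\int_0^h K_i(h-s)\,dZ_{i,\tau+s},
\]
and then apply Lemma~\ref{L:ZX} with $F=\Delta_h K_i$ to rewrite the past contribution as
\[
X_{i,0}+(\Delta_h K_i*dZ_i)_\tau=(1-(\Delta_h K_i*L_i)(\tau))X_{i,0}+(\Delta_h K_i*L_i)(0)X_{i,\tau}+(d(\Delta_h K_i*L_i)*X_i)_\tau.
\]
The monotonicity assumptions \eqref{eq:K orthant} are used exactly here, to show that $t\mapsto(\Delta_h K_i*L_i)(t)$ is nondecreasing with values in $[0,1]$; since $X_i\ge 0$ on $[0,\tau]$, every term on the right is nonnegative. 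This is the missing idea: the adverse past is not bounded by an ad hoc first-passage estimate, but is \emph{rewritten} as a nonnegative functional of the trajectory itself. Without this step (or an equivalent one) the argument does not close.
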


\begin{example} \label{E:CM}
If $K_i$ is completely monotone on $(0,\infty)$ and not identically zero, then \eqref{eq:K orthant} holds due to \citet[Theorem~5.5.4]{GLS:90}. Recall that a function $f$ is called completely monotone on $(0,\infty)$ if it is infinitely differentiable there with $(-1)^k f^{(k)}(t) \ge0$ for all $t>0$ and $k=0,1,\ldots$. This covers, for instance, any constant positive kernel, the fractional kernel $t^{\alpha-1}$ with $\alpha\in(\frac12,1)$, and the exponentially decaying kernel ${\rm e}^{-\beta t}$ with $\beta>0$. Moreover, sums and products of completely monotone functions are completely monotone.
\end{example}

\begin{proof}[Proof of Theorem~\ref{T:existence orthant}]
Define coefficients $b^n$ and $\sigma^n$ by
\[
b^n(x) = b\left( (x-n^{-1})^+\right), \qquad \sigma^n(x) = \sigma\left( (x-n^{-1})^+\right),
\]
and let $X^n$ be the solution of \eqref{SVE} given by Theorem~\ref{T:EXISTENCE}, with $b$ and $\sigma$ replaced by $b^n$ and $\sigma^n$. Note that $b^n$ and $\sigma^n$ are continuous, satisfy \eqref{LG} with a common constant, and converge to $b(x^+)$ and $\sigma(x^+)$ locally uniformly. Lemmas~\ref{L:tight} and~\ref{L:convergence} therefore imply that, along a subsequence, $X^n$ converges weakly to a solution $X$ of the stochastic Volterra equation
\[
X_t = X_0 + \int_0^t K(t-s) b(X_s^+) ds + \int_0^t K(t-s) \sigma(X_s^+) dW_s.
\] 
It remains to prove that $X$ is $\R^d_+$-valued and hence a solution of~\eqref{SVE}. For this it suffices to prove that each $X^n$ is $\R^d_+$-valued.

Dropping the superscript $n$, we are thus left with the task of proving the theorem under the stronger condition that, for some fixed $n\in\N$,
\begin{equation} \label{eq:orth:bdry}
\text{$x_i\le n^{-1}$ implies $b_i(x)\ge0$ and $\sigma_i(x)=0$.}
\end{equation}
Define $Z=\int b(X)dt + \int \sigma(X)dW$. For any $h>0$ and $i\in\{1,\ldots,d\}$, we have the identity
\begin{equation}\label{eq_Xt+h_Yt}
X_{i,t+h} = X_{i,0} + (K_i*dZ_i)_{t+h} = X_{i,0} + (\Delta_h K_i * dZ_i)_t + Y_t, \quad t\ge0,
\end{equation}
where we define
\[
Y_t = \int_0^\infty \bm1_{(t,t+h]}(s)K_i(t+h-s) d Z_{i,s}, \quad t\ge0.
\]
Since $\Delta_hK_i$ satisfies \eqref{K_gamma} due to Example~\ref{ex:kernels}\ref{ex:kernels:vi}, Lemma~\ref{L:Holder_bound} shows that $\Delta_h K_i*dZ_i$ has a continuous version. Thus so does $Y$, and these are the versions used in \eqref{eq_Xt+h_Yt}.
%Dropping the superscript $n$, we are thus left with the task of proving the theorem under the stronger condition that, for some fixed $n\in\N$,
%\begin{equation} \label{eq:orth:bdry}
%\text{$x_i\le n^{-1}$ implies $b_i(x)\ge0$ and $\sigma_i(x)=0$.}
%\end{equation}
%Define $Z=\int b(X)dt + \int \sigma(X)dW$. For any $h>0$, we have the identity
%\begin{equation}\label{eq_Xt+h_Yt}
%X_{t+h} = X_0 + (K*dZ)_{t+h} = X_0 + (\Delta_h K * dZ)_t + Y_t, \quad t\ge0,
%\end{equation}
%where we define
%\[
%Y_t = \int_0^\infty \bm1_{(t,t+h]}(s)K(t+h-s) d Z_s, \quad t\ge0.
%\]
%Since $\Delta_hK_i$ satisfies \eqref{K_gamma} due to Example~\ref{ex:kernels}\ref{ex:kernels:vi}, Lemma~\ref{L:Holder_bound} shows that $\Delta_h K_i*dZ$ has a continuous version. Thus so does $Y$, and these are the versions used in \eqref{eq_Xt+h_Yt}.

We claim that for any stopping time $\tau$ we have, on $\{\tau<\infty\}$, the almost sure equality
\begin{equation}\label{eq_eval_tau}
Y_\tau = \int_0^\infty \bm1_{(\tau,\tau+h]}(s)K_i(\tau+h-s) d Z_{i,s}.
\end{equation}
Note that the stochastic integral on the right-hand side is well-defined, since the integrand $\bm1_{(\tau,\tau+h]}(s)K_i(\tau+h-s)$ defines a predictable and $Z_i$-integrable process. We prove \eqref{eq_eval_tau} in the case where $\tau$ is bounded by some $T\ge0$, and assuming that $b_i=0$ so that $dZ_i=\sigma_i(X)dW$. The general case then follows easily. If $\tau$ takes finitely many values, the identity \eqref{eq_eval_tau} holds due to the local behavior of the stochastic integral. Suppose now $\tau\le T$ is arbitrary. For $k\in\N$, let $\tau_k$ be the stopping time given as the minimum of $T$ and the smallest multiple of $2^{-k}$ greater than $\tau$. Then $\tau_k$ takes finitely many values and $0\le\tau_k-\tau\le2^{-k}$. For all $k\in\N$ such that $2^{-k}<h$, we have the bound
\begin{align*}
\E\bigg[ \int_0^{T+h} & |\bm1_{(\tau_k,\tau_k+h]}(s)K_i(\tau_k+h-s) - \bm1_{(\tau,\tau+h]}(s)K_i(\tau+h-s)|^2 |\sigma_i(X_s)|^2 ds \bigg] \\
&\le \left( \int_{h-2^{-k}}^hK_i(u)^2du + \int_0^{2^{-k}}K_i(u)^2du + \int_0^h (K_i(u)-K_i(u+2^{-k}))^2du\right)\\
&\quad \times \E\left[ \sup_{s\le T+h} |\sigma_i(X_s)|^2\right].
\end{align*}
The right-hand side is finite and tends to zero as $k\to\infty$ due to Lemmas~\ref{L:Holder_bound} and \ref{L:moment bound} and the dominated convergence theorem. Thus by the It\^o isometry, $\int_0^\infty \bm1_{(\tau_k,\tau_k+h]}(s)K_i(\tau_k+h-s) d Z_{i,s}$ converges in $L^2$ to the right-hand side of \eqref{eq_eval_tau} as $k\to\infty$. Since $Y_t$ is continuous in $t$, we deduce that \eqref{eq_eval_tau} holds, as claimed.
Define the stopping time
\[
\tau_i=\inf\{t\ge0\colon X_{i,t}<0\}.
\]
Applying \eqref{eq_Xt+h_Yt} and \eqref{eq_eval_tau} with this stopping time yields, for any fixed $h>0$,
\begin{equation} \label{eq:orth:0}
X_{i,\tau_i+h} = X_{i,0} + (\Delta_h K_i * dZ_i)_{\tau_i} + \int_0^\infty \bm1_{(\tau_i,\tau_i+h]}(s)K_i(\tau_i+h-s) d Z_{i,s}
\end{equation}
on $\{\tau_i<\infty\}$. We claim that
\begin{equation} \label{eq:orth:1}
\text{$(\Delta_h K_i * L_i)(t)$ is nondecreasing in $t$.}
\end{equation}
Indeed, using that $K_i*L_i\equiv 1$ we have
\begin{align*}
(\Delta_h K_i*L_i)(t) &= \int_{[0,t]} K_i(t+h-u)L_i(du) \\
&= 1 - \int_{(t,t+h]}K_i(t+h-u)L_i(du) \\
&= 1 - \int_{(0,h]} K_i(h-u)L_i(t+du),
\end{align*}
and therefore, for any $s\le t$,
\[
(\Delta_h K_i * L_i)(t)-(\Delta_h K_i * L_i)(s) = \int_{(0,h]} K_i(h-u) \left( L_i(s+du) - L_i(t+du) \right).
\]
This is nonnegative since $K_i$ is nonnegative and $L_i$ non-increasing, proving~\eqref{eq:orth:1}.  Furthermore, since $K_i$ is non-increasing and $L_i$ nonnegative we obtain 
\begin{equation} \label{eq:orth:2}
0 \le (\Delta_hK_i * L_i)(t) \le (K_i * L_i)(t) = 1.
\end{equation}
Since $\Delta_hK_i$ is  continuous and of locally bounded variation on $\R_+$, it follows that $\Delta_hK_i*L_i$ is right-continuous and of locally bounded variation. Moreover, as remarked above, $\Delta_h K_i*dZ_i$ has a continuous version. Thus \eqref{L:ZX:2} in Lemma~\ref{L:ZX}, along with \eqref{eq:orth:1}--\eqref{eq:orth:2} and the fact that $X_{i,t}\ge0$ for $t\le\tau_i$, yield
\begin{align*}
X_{i,0} + (\Delta_h K_i * dZ_i)_{\tau_i} &= \left( 1-(\Delta_h K_i * L_i)(\tau_i)\right)X_{i,0} \\
&\quad + (\Delta_h K_i * L_i)(0)X_{i,\tau_i} \\
&\quad + (d(\Delta_h K_i * L_i)*X_i)_{\tau_i} \\
&\ge 0.
\end{align*}
In view of \eqref{eq:orth:0} it follows that
\begin{equation} \label{eq:orth:3}
X_{i,\tau_i+h} \ge
\int_0^\infty \bm1_{(\tau_i,\tau_i+h]}(s)K_i(\tau_i+h-s)\left( b_i(X_s)ds + \sigma_i(X_s)dW_s \right)
\end{equation}
on $\{\tau_i<\infty\}$. 

Next, for every $\varepsilon>0$, define the event
\[
A_\varepsilon = \{\text{$b_i(X_s)\ge0$ and $\sigma_i(X_s)=0$ for all $s\in[\tau_i,\tau_i+\varepsilon)$}\}.
\]
We now argue that
\begin{equation}\label{eq_Ptau_i00000}
\P(\{\tau_i<\infty\}\cap A_\varepsilon)=0.
\end{equation}
Indeed, on $\{\tau_i<\infty\}\cap A_\varepsilon$, the local behavior of the stochastic integral and \eqref{eq:orth:3} yield $X_{i,\tau_i+h}\ge0$ for each fixed $h\in(0,\varepsilon)$. Then, almost surely, this holds simultaneously for all $h\in\Q\cap(0,\varepsilon)$, and, by continuity, simultaneously for all $h\in(0,\varepsilon)$. On the other hand, by definition of $\tau_i$, on $\{\tau_i<\infty\}$ one has $X_{i,\tau_i+h}<0$ for some $h\in(0,\varepsilon)$. We deduce \eqref{eq_Ptau_i00000}.

Finally, by \eqref{eq:orth:bdry} and continuity of $X$, we have $\P(\bigcup_{\varepsilon\in\Q\cap(0,1)}A_{\varepsilon})=1$. In view of \eqref{eq_Ptau_i00000}, it follows that $\tau_i=\infty$ almost surely. Since $i$ was arbitrary, $X$ is $\R^d_+$-valued as desired.
\end{proof}

\section{Affine Volterra processes} \label{S:aff Vol}

Fix a dimension $d\in\N$ and a kernel $K\in L^2_{\rm loc}(\R_+,\R^{d\times d})$. Let $a\colon\R^d\to\S^d$ and $b\colon\R^d\to\R^d$ be affine maps given by
\begin{equation} \label{a and b}
\begin{aligned}
a(x) &= A^0 + x_1A^1 + \cdots + x_d A^d \\
b(x) &= b^0 + x_1b^1 + \cdots + x_d b^d 
\end{aligned}
\end{equation}
for some $A^i\in\S^d$ and $b^i\in\R^d$, $i=0,\ldots,d$. To simplify notation we introduce the $d\times d$ matrix
\[
B = \begin{pmatrix} \, b^1 & \cdots & b^d \, \end{pmatrix},
\]
and for any row vector $u\in(\C^d)^*$ we define the row vector
\[
A(u) = (uA^1u^\top, \ldots, uA^du^\top).
\]
Let $E$ be a subset of $\R^d$, which will play the role of state space for the process defined below, and assume that $a(x)$ is positive semidefinite for every $x\in E$. Let $\sigma\colon\R^d\to\R^{d\times d}$ be continuous and satisfy $\sigma(x)\sigma(x)^\top=a(x)$ for every $x\in E$. For instance, one can take $\sigma(x) = \sqrt{\pi(a(x))}$, where $\pi$ denotes the orthogonal projection onto the positive semidefinite cone, and the positive semidefinite square root is understood.

\begin{definition}
An {\em affine Volterra process (with state space $E$)} is a continuous $E$-valued solution $X$ of \eqref{SVE} with $a=\sigma\sigma^\top$ and $b$ as in \eqref{a and b}. In this paper we always take $X_0$ deterministic.
\end{definition}

Setting $K\equiv\id$ we recover the usual notion of an affine diffusion with state space $E$; see e.g.~\citet{F:09}. Even in this case, existence and uniqueness is often approached by first fixing a state space $E$ of interest, and then studying conditions on $(a,b)$ under which existence and uniqueness can be proved; see e.g.~\cite{DFS:03,CFMT:11,SV:12,KL:17}. A key goal is then to obtain explicit parameterizations that can be used in applications. In later sections we carry out this analysis for affine Volterra processes with state space $\R^d$, $\R^d_+$, and $\R\times\R_+$. In the standard affine case more general results are available. \citet{SV:10} characterize existence and uniqueness of affine jump-diffusions on closed convex state spaces, while \citet{AJBI:16} provide necessary and sufficient first order geometric conditions for existence of affine diffusions on general closed state spaces. We do not pursue such generality here for affine Volterra processes.

Assuming that an affine Volterra process is given, one can however make statements about its law. In the present section we develop general results in this direction. We start with a formula for the conditional mean. This is an immediate consequence of the variation of constants formula derived in Lemma~\ref{L:RX}.

\begin{lemma} \label{L:mean}
Let $X$ be an affine Volterra process. Then for all $t\le T$,
\begin{align}
\E[X_T \mid\Fcal_t ] = \left(\id - \int_0^T R_B(s)ds\right) X_0 + \left( \int_0^T E_B(s)ds\right) b^0  + \int_0^t E_B(T-s)\sigma(X_s)dW_s, \label{E:EtXT1}
\end{align}
where $R_B$ is the resolvent of $-KB$ and $E_B=K - R_B*K$. In particular,
\[
\E[X_T ] = \left(\id - \int_0^T R_B(s)ds\right) X_0 + \left( \int_0^T E_B(s)ds\right) b^0.
\]
\end{lemma}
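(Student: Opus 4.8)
The plan is to recognize \eqref{E:EtXT1} as a direct consequence of the variation of constants formula in Lemma~\ref{L:RX}, followed by a conditional-expectation argument. First I would exploit the affine structure of the drift, writing $b(X_s)=b^0+BX_s$, so that equation \eqref{SVE} takes the form
\[
X = F + (KB)*X + K*dZ,
\]
where $F(t)=X_0+\int_0^t K(s)\,ds\,b^0$ is the deterministic function obtained by convolving $K$ against the constant drift $b^0$, and $Z=\int_0^\cdot \sigma(X_s)\,dW_s$ is a continuous local martingale (a semimartingale with zero drift). Before invoking Lemma~\ref{L:RX} I would check its hypotheses: $F$ is continuous because $K\in L^1_{\rm loc}$; moreover $K*dZ$ is continuous, since $K*(b(X)dt+\sigma(X)dW)=X-X_0$ is continuous by assumption while $K*(b(X)dt)$ is continuous (convolution of an $L^1_{\rm loc}$ kernel with a continuous function), so the difference $K*dZ$ is continuous as well.

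Applying Lemma~\ref{L:RX} with this $B$, $F$, and $Z$ yields $X=F-R_B*F+E_B*dZ$, where $R_B$ is the resolvent of $-KB$ and $E_B=K-R_B*K$. The deterministic part is then a short convolution computation. Writing $F=X_0+K*b^0$ (with $b^0$ denoting the constant function), associativity of convolution gives $R_B*F=\big(\int_0^\cdot R_B(s)\,ds\big)X_0+(R_B*K)*b^0$, so that
\[
F-R_B*F = \Big(\id-\int_0^\cdot R_B(s)\,ds\Big)X_0 + (K-R_B*K)*b^0 = \Big(\id-\int_0^\cdot R_B(s)\,ds\Big)X_0 + \Big(\int_0^\cdot E_B(s)\,ds\Big)b^0,
\]
which evaluated at time $T$ is exactly the deterministic part of \eqref{E:EtXT1}.

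Finally I would take $\E[\,\cdot\mid\Fcal_t]$ in the identity $X_T=(F-R_B*F)(T)+\int_0^T E_B(T-s)\sigma(X_s)\,dW_s$. The deterministic term is unaffected, and the conditional expectation of the stochastic integral collapses to $\int_0^t E_B(T-s)\sigma(X_s)\,dW_s$ precisely when that integral is a genuine martingale. This martingale property is the one point requiring care, and I expect it to be the main obstacle. I would establish it by combining the moment bound of Lemma~\ref{L:moment bound}, which applies because $a$ is affine and hence $\sigma$ has linear growth, so that $\sup_{s\le T}\E[|a(X_s)|]<\infty$, with the fact that $E_B\in L^2_{\rm loc}$ (inherited from $K\in L^2_{\rm loc}$ and the $L^1_{\rm loc}$ resolvent $R_B$ via Young's inequality). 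Together these make $\E\big[\int_0^T |E_B(T-s)|^2|a(X_s)|\,ds\big]$ finite, so the integral is a square-integrable martingale. The unconditional formula for $\E[X_T]$ then follows by setting $t=0$, where the stochastic integral vanishes. The computation of $F-R_B*F$ is the algebraic heart of the argument, while the integrability verification for the martingale property is the genuinely analytic step.
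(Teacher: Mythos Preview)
Your proposal is correct and follows essentially the same approach as the paper: apply Lemma~\ref{L:RX} to the affine drift decomposition, then use Lemma~\ref{L:moment bound} together with $E_B\in L^2_{\rm loc}$ to upgrade the stochastic integral to a true martingale before taking conditional expectations. The only cosmetic difference is bookkeeping---the paper places the constant drift $b^0$ in the semimartingale $Z=b^0\,dt+\sigma(X)\,dW$ (so the $E_B*b^0$ term emerges directly from $E_B*dZ$), whereas you absorb $K*b^0$ into $F$ and recover the same term from the computation of $F-R_B*F$; both routes are equivalent.
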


\begin{proof}
Since $X=X_0+(KB)*X + K*(b^0dt+\sigma(X)dW)$, Lemma \ref{L:RX} yields 
\[
X = \left(\id - R_B* \id\right) X_0 + E_B*(b^0dt+\sigma(X)dW).
\]
Consider the local martingale $M_t=\int_0^t E_B(T-s)\sigma(X_s)dW_s$, $t\in[0,T]$. Its quadratic variation satisfies
\[
\E[|\langle M\rangle_T|]\le\int_0^T |E_B(T-s)|^2\,\E[|\sigma(X_s)|^2]ds \le \|E_B\|_{L^2(0,T)}\max_{s\le T}\E[|\sigma(X_s)|^2],
\]
which is finite by Lemma~\ref{L:moment bound}. Thus $M$ is a martingale, so taking $\Fcal_t$-conditional expectations completes the proof.
\end{proof}

The first main result of this section is the following theorem, which expresses the conditional Fourier--Laplace functional of an affine Volterra process in terms of the conditional mean in Lemma~\ref{L:mean} and the solution of a quadratic Volterra integral equation, which we call a {\em Riccati--Volterra equation}.

\begin{theorem} \label{T:cf}
Let $X$ be an affine Volterra process and fix some $T<\infty$, $u\in(\C^d)^*$, and $f\in L^1([0,T],(\C^d)^*)$. Assume $\psi\in L^2([0,T],(\C^d)^*)$ solves the Riccati--Volterra equation
\begin{equation} \label{RicVol}
\psi = uK + \left( f + \psi B + \frac12 A(\psi) \right) * K.
\end{equation}
Then the process $\{Y_t,\,0\le t\le T\}$ defined by
\begin{align}
Y_t &= Y_0 + \int_0^t \psi(T-s)\sigma(X_s)dW_s - \frac12 \int_0^t \psi(T-s)a(X_s)\psi(T-s)^\top ds, \label{eq:dY} \\
Y_0 &= uX_0 + \int_0^T \left( f(s)X_0 + \psi(s)b(X_0) + \frac12 \psi(s) a(X_0) \psi(s)^\top \right) ds \label{eq:Y0} 
\end{align}
satisfies
\begin{equation} \label{Y}
Y_t =  \E\left[uX_T + (f*X)_T \mid\Fcal_t\right] + \frac12 \int_t^T \psi(T-s)a(\E[X_s\mid\Fcal_t])\psi(T-s)^\top ds
\end{equation}
for all $0\le t\le T$. The process $\{\exp(Y_t),\,0\le t\le T\}$ is a local martingale and, if it is a true martingale, one has the exponential-affine transform formula
\begin{equation} \label{eq:expaff}
\E\left[ \exp\left(uX_T + (f * X)_T\right)  \Mid \Fcal_t \right] = \exp(Y_t), \quad t\le T.
\end{equation}
\end{theorem}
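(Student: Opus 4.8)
The local-martingale claim needs no more than It\^o's formula. By \eqref{eq:dY} the drift of $Y$ is $-\tfrac12\psi(T-t)a(X_t)\psi(T-t)^\top$ while the density of $\langle Y\rangle$ is $\psi(T-t)a(X_t)\psi(T-t)^\top$, so the It\^o correction cancels the drift and $d\exp(Y_t)=\exp(Y_t)\,\psi(T-t)\sigma(X_t)\,dW_t$, a local martingale. Likewise, once \eqref{Y} is established, evaluating it at $t=T$ kills the integral and trivializes the conditional expectation, giving $Y_T=uX_T+(f*X)_T$; the true-martingale hypothesis then yields \eqref{eq:expaff}. Thus the real content is the representation \eqref{Y}. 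The plan is to denote by $\tilde Y_t$ the right-hand side of \eqref{Y} and to show that $\tilde Y$ is a continuous semimartingale obeying the \emph{same} dynamics \eqref{eq:dY} and the \emph{same} initial value \eqref{eq:Y0} as $Y$; since $Y$ is by construction the solution of \eqref{eq:dY} started at \eqref{eq:Y0}, this forces $\tilde Y=Y$.

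To differentiate $\tilde Y$ I would first rewrite the conditional means. Convolving \eqref{SVE} with the resolvent exactly as in the proof of Lemma~\ref{L:mean} gives $X_t=m_t+\int_0^t E_B(t-r)\sigma(X_r)\,dW_r$ with $m_t=\E[X_t]$, and more generally $\E[X_s\mid\Fcal_t]=m_s+\int_0^t E_B(s-r)\sigma(X_r)\,dW_r$ for $t\le s$. Since $a$ is affine, $\psi\, a(y)\,\psi^\top=\psi A^0\psi^\top+A(\psi)\,y$, which linearizes the integrand in \eqref{Y}. The martingale $N_t:=\E[uX_T+(f*X)_T\mid\Fcal_t]$ then has an explicit stochastic-integral representation with integrand $[uE_B+f*E_B](T-r)\sigma(X_r)$, obtained by inserting the formula for $X$ and applying the stochastic Fubini theorem \citep[Theorem~2.2]{V:12}; the Fubini bound is finite because $E_B\in L^2_{\rm loc}$, $f\in L^1$, and $\sup_{s\le T}\E[|X_s|^2]<\infty$ by Lemma~\ref{L:moment bound}. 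The remaining term $\tfrac12\int_t^T A(\psi(T-s))\,\E[X_s\mid\Fcal_t]\,ds$ is handled the same way; after Fubini it is a stochastic integral over $[0,t]$ whose integrand still depends on $t$ through the inner limit, so its differential picks up a genuine $dW_t$ part $\tfrac12\,(A(\psi)*E_B)(T-t)\sigma(X_t)$ together with a drift $-\tfrac12 A(\psi(T-t))(X_t-m_t)$ coming from the moving limit. Adding the deterministic pieces, the drift of $d\tilde Y_t$ collapses, via the affine identity above, to $-\tfrac12\psi(T-t)a(X_t)\psi(T-t)^\top\,dt$, matching \eqref{eq:dY} with no further input.

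Matching the \emph{diffusion} coefficients is where \eqref{RicVol} enters: it amounts to the deterministic identity $\psi=uE_B+\bigl(f+\tfrac12 A(\psi)\bigr)*E_B$. I would prove this equivalent to \eqref{RicVol} through the resolvent relations $E_B=K-R_B*K$ and $R_B*(KB)=KB+R_B$, the latter being the resolvent equation \eqref{resolvent} for $-KB$. Concretely, right-multiplying \eqref{RicVol} by $B$ and using associativity (Lemma~\ref{L:assoc_dM}-style convolution algebra) shows that $w:=\psi B$ solves the linear Volterra equation $w=h+w*(KB)$ with $h=u(KB)+\bigl(f+\tfrac12 A(\psi)\bigr)*(KB)$; a one-line verification using $R_B*(KB)=KB+R_B$ shows that $w=-\bigl(uR_B+(f+\tfrac12 A(\psi))*R_B\bigr)$ also solves it, and uniqueness of solutions of this equation identifies $\psi B$ with this expression. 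Substituting back into \eqref{RicVol} and using $E_B=K-R_B*K$ turns $uK+\bigl(f+\psi B+\tfrac12 A(\psi)\bigr)*K$ into $uE_B+\bigl(f+\tfrac12 A(\psi)\bigr)*E_B$, as required. The initial condition is then a separate deterministic computation: linearizing $a$ and $b$, using the mean equation $m=X_0+(KB)*m+K*b^0$, and applying \eqref{RicVol} once more shows that both $\tilde Y_0$ and the value \eqref{eq:Y0} reduce to the same expression.

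I expect the main obstacle to be precisely this diffusion-coefficient identity $\psi=uE_B+\bigl(f+\tfrac12 A(\psi)\bigr)*E_B$ and its reduction to \eqref{RicVol}: this is the step that ties the Riccati--Volterra equation to the martingale structure and is not visible a priori, the resolvent $E_B$ silently absorbing the $\psi B$ term. The remaining difficulties---justifying the stochastic Fubini interchanges and correctly differentiating the stochastic integral with a moving limit of integration---are technical rather than conceptual, and are controlled by the local square-integrability of $E_B$ together with the moment bounds of Lemma~\ref{L:moment bound}.
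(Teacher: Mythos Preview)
Your proposal is correct and follows the paper's overall architecture: define $\tilde Y_t$ by the right-hand side of \eqref{Y}, show $\tilde Y=Y$ via the resolvent identity $\psi=uE_B+(f+\tfrac12 A(\psi))*E_B$, and read off the local-martingale and transform statements. Two points of comparison are worth making.

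First, your derivation of the key identity differs from the paper's Lemma~\ref{L:RicVol_res}. The paper convolves \eqref{RicVol} from the right by the resolvent $\widetilde R_B$ of $-BK$ and then has to establish the commutation relation $K*\widetilde R_B=R_B*K$, which it does by an exponential damping and Neumann-series argument. Your route---right-multiply \eqref{RicVol} by $B$, recognise that $\psi B$ solves a linear Volterra equation whose unique solution is $-(uR_B+(f+\tfrac12 A(\psi))*R_B)$, and substitute back---is more direct and avoids the commutation issue altogether; it is a genuine simplification.

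Second, where you propose to \emph{differentiate} $\tilde Y_t$, the paper instead computes $\tilde Y_t$ directly. It uses $\E[X_s\mid\Fcal_t]=X_s$ for $s\le t$ to split $\int_t^T=\int_0^T-\int_0^t$; the $\int_0^T$ piece becomes, after stochastic Fubini, an ordinary stochastic integral over $[0,t]$ with integrand $(f+\tfrac12 A(\psi))*E_B(T-r)\sigma(X_r)$ independent of $t$, and the $\int_0^t$ piece combines with the $A^0$ term to produce exactly $-\tfrac12\int_0^t\psi(T-s)a(X_s)\psi(T-s)^\top ds$. This sidesteps the Leibniz-type differentiation of $\int_0^t\Phi(t,r)\sigma(X_r)dW_r$ that your argument relies on. Your Leibniz step can be made rigorous (write $\Phi(t,r)=\Phi(r,r)+\int_r^t\partial_1\Phi(u,r)du$ and apply stochastic Fubini), but the paper's splitting trick is cleaner and yields the semimartingale decomposition of $\tilde Y$ in one stroke without ever having to argue that $\tilde Y$ is a semimartingale.
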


Referring to \eqref{eq:expaff} as an exponential-affine transform formula is motivated by the fact that $Y_t$ depends affinely on the conditional expectations $\E[X_s\mid\Fcal_t]$. We show in Theorem~\ref{T:aff_past} below that under mild additional assumptions on $K$, $Y_t$ is actually an affine function of the past trajectory $\{X_s,\,s\le t\}$. Before proving Theorem~\ref{T:cf} we give the following lemma.

\begin{lemma} \label{L:RicVol_res}
The Riccati--Volterra equation \eqref{RicVol} is equivalent to
\begin{equation} \label{RicVol_2}
\psi = uE_B +  \left( f + \frac12 A(\psi)  \right)*E_B,
\end{equation}
where $E_B=K - R_B*K$ and $R_B$ is the resolvent of $-KB$.
\end{lemma}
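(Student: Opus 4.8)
The plan is to reduce the claimed equivalence to a purely linear statement. Since the term $f+\frac12 A(\psi)$ occurs identically in \eqref{RicVol} and \eqref{RicVol_2}, I would freeze it: writing $h:=f+\frac12 A(\psi)\in L^1([0,T],(\C^d)^*)$, it suffices to prove that, for every fixed function $h$ and every $\psi$,
\[
\psi = uK + (h+\psi B)*K \qquad\Longleftrightarrow\qquad \psi = uE_B + h*E_B .
\]
Applying this with $h=f+\frac12 A(\psi)$ is legitimate because the \emph{same} $\psi$, hence the same $h$, enters both equations, and it then yields the lemma. The only ingredients are the resolvent equation \eqref{resolvent} for $R_B$, the definition $E_B=K-R_B*K$, and associativity and bilinearity of convolution, with due care since $B$, $K$, $R_B$ are matrix-valued and need not commute.

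The engine of both implications is the single algebraic identity $E_B B = -R_B$. I would obtain it by expanding $E_B B = KB - (R_B*K)B = KB - R_B*(KB)$ and reading \eqref{resolvent} for the kernel $-KB$ as $R_B*(KB)=KB+R_B$, whence $E_B B = KB-(KB+R_B)=-R_B$. The implication \eqref{RicVol_2} $\Rightarrow$ \eqref{RicVol} is then a direct substitution: from $\psi=uE_B+h*E_B$ one computes $\psi B = u(E_B B)+h*(E_B B) = -uR_B - h*R_B$, so that
\[
uK + h*K + (\psi B)*K = u(K-R_B*K) + h*(K-R_B*K) = uE_B + h*E_B = \psi,
\]
which is \eqref{RicVol}. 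Here one only distributes the convolution and recombines the $R_B*K$ terms into $E_B=K-R_B*K$.

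For the converse \eqref{RicVol} $\Rightarrow$ \eqref{RicVol_2} I would argue by uniqueness. Having shown above that $\tilde\psi:=uE_B+h*E_B$ solves \eqref{RicVol}, the difference $\eta:=\psi-\tilde\psi$ of any two solutions satisfies the homogeneous equation $\eta=(\eta B)*K$; right-multiplying by $B$ shows that $\eta B$ solves $\zeta=\zeta*(KB)$, a linear Volterra equation of the second kind whose only $L^1_{\rm loc}$ solution is $0$ (the kernel $KB\in L^1_{\rm loc}$ admits a resolvent of the second kind). Hence $\eta B=0$, and then $\eta=(\eta B)*K=0$, so $\psi=\tilde\psi$ satisfies \eqref{RicVol_2}. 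The main obstacle is precisely this handedness: the term $(\psi B)*K$ places $\psi$ to the \emph{left} of $K$, so it is the mirror image of the left-convolution structure of Lemma~\ref{L:RX}, and because the matrices involved do not commute one cannot simply transpose the variation-of-constants formula there (the transpose of the kernel $-KB$ is not $-KB$ with its factors reordered). This is what forces the forward direction through a uniqueness argument rather than a one-line algebraic inversion; keeping track of the non-commutative convolutions in the identity $E_B B=-R_B$ is the only delicate bookkeeping.
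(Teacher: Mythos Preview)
Your argument is correct. The implication \eqref{RicVol_2} $\Rightarrow$ \eqref{RicVol} is essentially the paper's own proof: the paper uses the identity $E_B*(BK)=-R_B*K$, which is nothing but your pointwise identity $E_BB=-R_B$ convolved on the right with $K$, and then rearranges exactly as you do.

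The converse direction is where the two proofs diverge. The paper introduces the resolvent $\widetilde R_B$ of $-BK$ (note the reversed order of factors) and convolves \eqref{RicVol} on the right by $\widetilde R_B$; this produces \eqref{RicVol_2} once one establishes the commutation identity $K*\widetilde R_B = R_B*K$, which the paper proves via exponential damping and the Neumann series representations of both resolvents. Your route sidesteps this entirely: having already shown that $\widetilde\psi = uE_B+h*E_B$ solves the frozen linear equation $\chi = uK+(h+\chi B)*K$, you simply invoke uniqueness for the homogeneous equation $\eta = (\eta B)*K$ to conclude $\psi=\widetilde\psi$. This is more elementary---it only requires that $-KB\in L^1_{\rm loc}$ admits a resolvent, which is immediate---and avoids the somewhat technical series argument. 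The trade-off is that the paper's proof yields, as a byproduct, the algebraic identity $K*\widetilde R_B = R_B*K$ relating the two resolvents, which may be of independent interest, whereas your proof does not.
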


\begin{proof}
Assume \eqref{RicVol_2} holds. Using the identity $E_B*(BK)=-R_B*K$ we get
\[
\psi - \psi*(BK) = u(E_B+R_B*K) + \left( f + \frac12 A(\psi)  \right)*(E_B+R_B*K),
\]
which is \eqref{RicVol}. Conversely, assume \eqref{RicVol} holds. With $\widetilde R_B$ being the resolvent of $-BK$, we obtain
\[
\psi - \psi*\widetilde R_B = u(K-K*\widetilde R_B) + \left( f + \frac12 A(\psi)  \right) * (K-K*\widetilde R_B) - \psi*\widetilde R_B.
\]
To deduce \eqref{RicVol_2} it suffices to prove $K*\widetilde R_B = R_B*K$. Equivalently, we show that for each $T<\infty$, there is some $\sigma>0$ such that
\begin{equation} \label{eq:LRicVol_1}
\text{$({\rm e}^{-\sigma t}K)*({\rm e}^{-\sigma t}\widetilde R_B) = ({\rm e}^{-\sigma t}R_B)*({\rm e}^{-\sigma t}K)$ on $[0,T]$},
\end{equation}
where ${\rm e}^{-\sigma t}$ is shorthand for the function $t\mapsto {\rm e}^{-\sigma t}$. It follows from the definitions that ${\rm e}^{-\sigma t}R_B$ is the resolvent of $-{\rm e}^{-\sigma t}KB$, and that ${\rm e}^{-\sigma t}\widetilde R_B$ is the resolvent of $-{\rm e}^{-\sigma t}BK$; see \citet[Lemma~2.3.3]{GLS:90}. Choosing $\sigma$ large enough that $\|{\rm e}^{-\sigma t}KB\|_{L^1(0,T)}<1$ we get, as in the proof of \citet[Theorem~2.3.1]{GLS:90},
\[
{\rm e}^{-\sigma t}R_B = -\sum_{k\ge1}({\rm e}^{-\sigma t}KB)^{*k} \quad\text{and}\quad {\rm e}^{-\sigma t}\widetilde R_B = -\sum_{k\ge1}({\rm e}^{-\sigma t}BK)^{*k}
\]
on $[0,T]$. Since $B$ does not depend on $t$ 
\[
({\rm e}^{-\sigma t}KB)^{*k}*({\rm e}^{-\sigma t}K)=({\rm e}^{-\sigma t}K)*({\rm e}^{-\sigma t}BK)^{*k},\quad k\ge 1.
\]
This readily implies \eqref{eq:LRicVol_1}, as required.
\end{proof}

\begin{proof}[Proof of Theorem~\ref{T:cf}]
Let $\widetilde Y_t$ be defined by the right-hand side of \eqref{Y} for $0\le t\le T$. We first prove that $\widetilde Y_0=Y_0$. A calculation using the identity $v a(x)v^\top = v A^0 v^\top + A(v)x$ and the definition \eqref{eq:Y0} of $Y_0$ yields
\begin{equation}\label{eq:Ric_class_D_new}
\begin{aligned}
\widetilde Y_0-Y_0 &= u\,\E[X_T-X_0] + (f*\E[X-X_0])(T) \\
&\quad + \left(\frac12 A(\psi)*\E[X-X_0]\right)(T) - \left( \psi*(b^0+BX_0)\right)(T),
\end{aligned}
\end{equation}
where $\E[X-X_0]$ denotes the function $t\mapsto\E[X_t-X_0]=\E[X_t-X_0\mid\Fcal_0]$. This function satisfies
\[
\E[X-X_0] = K*\left(b^0 + B\,\E[X]\right),
\]
as can be seen by taking expectations in \eqref{SVE} and applying the Fubini theorem thanks to Lemma~\ref{L:moment bound}. Consequently,
\begin{align*}
\frac12 A(\psi)*\E[X-X_0] &= \frac12 A(\psi)*K*\left(b^0 + B\,\E[X]\right) \\
&= \left(\psi - uK - (f+\psi B)*K \right)*\left(b^0 + B\,\E[X]\right) \\
&= \psi*\left(b^0 + B\,\E[X]\right) - u\,\E[X-X_0] \\
&\quad - (f+\psi B)*\E[X-X_0].
\end{align*}
Substituting this into \eqref{eq:Ric_class_D_new} yields $\widetilde Y_0-Y_0=0$, as required.

We now prove that $\widetilde Y=Y$. In the remainder of the proof, we let $C$ denote a quantity that does not depend on $t$, and may change from line to line.  Using again the identity $va(x)v^\top = v A^0 v^\top + A(v)x$ we get
\begin{align*}
\widetilde Y_t &= C + u\, \E[X_T\mid\Fcal_t] + \int_0^T \left( f + \frac12 A(\psi)  \right)\!(T-s)\, \E[X_s\mid\Fcal_t]\, ds \\
&\quad   - \frac12 \int_0^t \psi(T-s)a(X_s)\psi(T-s)^\top ds.
\end{align*}
Lemma~\ref{L:mean}, the stochastic Fubini theorem, see \citet[Theorem~2.2]{V:12}, and a change of variables yield
\begin{align*}
& \int_0^T \left( f + \frac12 A(\psi)  \right)\!(T-s)\, \E[X_s\mid\Fcal_t]\, ds \\
&\quad=C +  \int_0^T \left( f + \frac12 A(\psi)  \right)\!(T-s)\, \int_0^t \bm1_{\{r<s\}}E_B(s-r)\sigma(X_r)dW_r\, ds \\
&\quad=C +   \int_0^t  \left( \int_r^T \left( f + \frac12 A(\psi)  \right)\!(T-s)E_B(s-r)ds\right)\sigma(X_r)dW_r \\
&\quad=C +   \int_0^t  \left(  \left( f + \frac12 A(\psi)  \right)*E_B\right)\!(T-r)\sigma(X_r)dW_r,
\end{align*}
where the application of the stochastic Fubini theorem in the second equality is justified by the fact that
\begin{align*}
&\int_0^T \left( \int_0^t \left| \left( f + \frac12 A(\psi)  \right)\!(T-s)\, \bm1_{\{r<s\}}E_B(s-r) \sigma(X_r) \right|^2 dr \right)^{1/2} ds \\
&\quad\le \max_{0\le s\le T}|\sigma(X_s)|\, \| E_B \|_{L^2(0,T)}\| f + \frac12 A(\psi) \|_{L^1(0,T)} < \infty.
\end{align*}
Since $\E[X_T\mid\Fcal_t] = C + \int_0^t E_B(T-r)\sigma(X_r)dW_r$ by Lemma~\ref{L:mean}, we arrive at
\begin{align*}
\widetilde Y_t &= C + \int_0^t \left(uE_B +  \left( f + \frac12 A(\psi)  \right)*E_B\right)\!(T-r)\sigma(X_r)dW_r \\
&\quad   - \frac12 \int_0^t \psi(T-s)a(X_s)\psi(T-s)^\top ds.
\end{align*}
Evaluating this equation at $t=0$, we find that the quantity $C$ appearing on the right-hand side is equal to $\widetilde Y_0$, which we already proved is equal to $Y_0$. Due to Lemma~\ref{L:RicVol_res} and the definition \eqref{eq:dY} of $Y_t$, we then obtain that $\widetilde Y=Y$.
%\begin{align*}
%\widetilde Y_t &= \widetilde Y_0 + \int_0^t \left(uE_B +  \left( f + \frac12 A(\psi)  \right)*E_B\right)\!(T-r)\sigma(X_r)dW_r \\
%&\quad   - \frac12 \int_0^t \psi(T-s)a(X_s)\psi(T-s)^\top ds.
%\end{align*}
%Due to Lemma~\ref{L:RicVol_res} and \eqref{eq:Y0} we then obtain \eqref{eq:dY}.

The final statements are now straightforward. Indeed, \eqref{eq:dY} shows that $Y+\frac12\langle Y\rangle$ is a local martingale, so that $\exp(Y)$ is a local martingale by It\^o's formula. In the true martingale situation, the exponential-affine transform formula then follows upon observing that $Y_T=uX_T + (f * X)_T$ by \eqref{Y}.
\end{proof}

In the particular case $f\equiv0$ and $t=0$, Theorem~\ref{T:cf} yields two different expressions for the  Fourier--Laplace transform of $X$,
\begin{align}
\E[{\rm e}^{uX_T}] &= \exp\left( \E[uX_T]  + \frac12 \int_0^T \psi(T-t)a(\E[X_t])\psi(T-t)^\top dt \right) \label{expaff_classical_0} \\
&=\exp \left(\phi(T) + \chi(T) X_0 \right), \label{expaff_classical}
\end{align}
where $\phi$ and $\chi$ are defined by
\begin{align}
\phi'(t) &= \psi(t) b^0 + \frac12 \psi(t) A^0 \psi(t)^\top, && \phi(0)=0, \label{E:phichi-1} \\
\chi'(t) &= \psi(t) B + \frac12 A(\psi(t)),		&& \chi(0)=u. \label{E:phichi-2}
\end{align}
If $K$ admits a resolvent of the first kind $L$, one sees upon convolving \eqref{RicVol} by $L$ and using \eqref{res_L} that $ \chi = \psi *L$; see also Example~\ref{E:faffine} below. Note that \eqref{E:phichi-1}--\eqref{E:phichi-2} reduce to the classical Riccati equations when $K\equiv\id$, since in this case $L=\delta_0 \id$ and hence $\psi=\chi$. While the first expression~\eqref{expaff_classical_0} does exist in the literature on affine diffusions in the classical case $K\equiv\id$, see \citet[Proposition~4.2]{SV:10}, the second expression~\eqref{expaff_classical} is much more common.

In the classical case one has a conditional version of \eqref{expaff_classical}, namely
\[
\E[{\rm e}^{uX_T}\mid\Fcal_t] = \exp\left( \phi(T-t) + \psi(T-t)X_t \right).
\]
This formulation has the advantage of showing clearly that the right-hand side depends on $X_t$ in an exponential-affine manner. In the general Volterra case the lack of Markovianity precludes such a simple form, but using the resolvent of the first kind it is still possible to obtain an explicit expression that is exponential-affine in the past trajectory $\{X_s,\,s\le t\}$. Note that this property is not at all obvious either from \eqref{eq:expaff} or from the expression
\begin{equation*} 
\E[{\rm e}^{uX_T}\mid\Fcal_t] = \Ecal\Big(Y_0 + \int \psi(T-s)\sigma(X_s)dW_s \Big)_t,
\end{equation*}
which follows directly from \eqref{eq:dY}--\eqref{eq:Y0} and where $\Ecal$ denotes stochastic exponential. The second main result of this section directly leads to such an exponential-affine representation under mild additional assumptions on $K$.

\begin{theorem} \label{T:aff_past}
Assume $K$ is continuous on $(0,\infty)$, admits a resolvent of the first kind $L$, and that one has the total variation bound
\begin{equation} \label{T:aff_past:TVbound}
\sup_{h\le T} \|\Delta_hK*L\|_{{\rm TV}(0,T)} < \infty
\end{equation}
for all $T\ge0$. Then the following statements hold:
\begin{enumerate}
\item\label{T:aff_past:1} With the notation and assumptions of Lemma~\ref{L:mean}, the matrix function
\[
\Pi_h = \Delta_h E_B * L - \Delta_h(E_B*L)
\]
is right-continuous and of locally bounded variation on $[0,\infty)$ for every $h\ge0$, and the conditional expectation \eqref{E:EtXT1} is given by
\begin{equation}\label{E:EtXT2}
\E[X_T \mid\Fcal_t ] =  (\id*E_B)(h) b^0 + (\Delta_{h}E_B \ast L)(0)X_t  -\Pi_h(t) X_0 +  \left(  d\Pi_h \ast X  \right)_t
\end{equation}
with $h=T-t$.
\item\label{T:aff_past:2} With the notation and assumptions of Theorem~\ref{T:cf}, the scalar function
		\[
		\pi_h= \Delta_h \psi *L - \Delta_h(\psi*L)
		\]
	 is right-continuous and of bounded variation on $[0,T-h]$ for every $h\le T-t$, and the process $Y$ in~\eqref{Y} is given by
\begin{equation}\label{L:aff_past:2:eq}
Y_t  =  \phi(h) + (\Delta_h f*X)_t + (\Delta_h\psi *L)(0)X_t - \pi_h(t)X_0 + (d\pi_h * X)_t
\end{equation}
with $h=T-t$ and
	\[
	\phi(h)=\int_0^{h}\left(\psi(s)b_0+\frac12\psi(s)A_0\psi(s)^\top\right)ds.
	\]
\end{enumerate}
\end{theorem}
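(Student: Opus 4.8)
The plan is to treat both parts by the same mechanism: write the quantity of interest through its martingale representation, recognize the stochastic integral as a convolution $\Delta_h(\,\cdot\,)*\sigma(X)\,dW$ with $h=T-t$, and convert this into a functional of the past path $\{X_s:s\le t\}$ by the variation-of-constants identity \eqref{L:ZX:2} of Lemma~\ref{L:ZX}. Throughout I take $Z=\int b(X)\,dt+\int\sigma(X)\,dW$, so that $X-X_0=K*dZ$ and $L*(X-X_0)=Z$, and I use that $E_B$ and $\psi$ satisfy \eqref{K_gamma} (by Example~\ref{ex:kernels}\ref{ex:kernels:iii},\ref{ex:kernels:v},\ref{ex:kernels:vi}, since $K$ does and $R_B\in L^2_{\rm loc}$), so that $\Delta_hE_B*dZ$ and $\Delta_h\psi*dZ$ admit continuous versions by Lemma~\ref{L:Holder_bound} and \eqref{L:ZX:2} may be used up to indistinguishability. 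The regularity of $\Pi_h$ and $\pi_h$, proved last, is exactly what licenses this and gives meaning to $d\Pi_h*X$ and $d\pi_h*X$.

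For part \ref{T:aff_past:1} I start from Lemma~\ref{L:mean}, whose martingale term is $\int_0^tE_B(T-s)\sigma(X_s)\,dW_s=(\Delta_hE_B*\sigma(X)\,dW)_t$. Applying \eqref{L:ZX:2} with $F=\Delta_hE_B$ and peeling off the drift contained in $dZ$ gives
\[
(\Delta_hE_B*\sigma(X)\,dW)_t=(\Delta_hE_B*L)(0)X_t-(\Delta_hE_B*L)(t)X_0+(d(\Delta_hE_B*L)*X)_t-\int_0^tE_B(T-s)b^0\,ds-\int_0^tE_B(T-s)BX_s\,ds.
\]
The $b^0$-integrals collapse to $(\id*E_B)(h)b^0$ by a change of variables. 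Two consequences of the resolvent equation \eqref{resolvent} finish the reduction: first $E_B*L=\id-R_B*1$ (convolve $E_B=K-R_B*K$ by $L$ and use \eqref{res_L}), which turns the $X_0$-coefficient into $-\Pi_h(t)$ (as $\Delta_h(E_B*L)(t)=(E_B*L)(T)$) and shows $\Delta_h(E_B*L)$ is absolutely continuous with density $-\Delta_hR_B$; second $E_BB=-R_B$ (since $(R_B*K)B=R_B*(KB)=KB+R_B$), which gives $\int_0^tE_B(T-s)BX_s\,ds=(d\Delta_h(E_B*L)*X)_t$, so the two remaining path terms combine into $(d\Pi_h*X)_t$. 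Substituting into \eqref{E:EtXT1} yields \eqref{E:EtXT2}.

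Part \ref{T:aff_past:2} is identical in spirit, starting from \eqref{eq:dY}--\eqref{eq:Y0}, whose martingale term is $(\Delta_h\psi*\sigma(X)\,dW)_t$; the role of the two resolvent identities is now played by a single one. Convolving the Riccati--Volterra equation \eqref{RicVol} by $L$ and using \eqref{res_L} gives $\psi*L=u+\int_0^\cdot\big(f+\psi B+\tfrac12A(\psi)\big)\,ds$, so $\psi*L$ is absolutely continuous with density $f+\psi B+\tfrac12A(\psi)$. Writing $\psi(T-s)a(X_s)\psi(T-s)^\top=\psi(T-s)A^0\psi(T-s)^\top+A(\psi(T-s))X_s$ and applying \eqref{L:ZX:2} with $F=\Delta_h\psi$, the deterministic quadratic drift and the constant part of $Y_0$ produce $\phi(h)$, the $X_0$-coefficient becomes $-\pi_h(t)$, the $X_t$-coefficient is $(\Delta_h\psi*L)(0)$, and the path terms $-\int_0^t\psi(T-s)BX_s\,ds-\tfrac12\int_0^tA(\psi(T-s))X_s\,ds$ match $(d\pi_h*X)_t-(\Delta_hf*X)_t$ precisely because $d\Delta_h(\psi*L)$ has density $\Delta_hf+\Delta_h\psi B+\tfrac12A(\Delta_h\psi)$. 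This gives \eqref{L:aff_past:2:eq}.

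The genuine work is the regularity of $\Pi_h$ and $\pi_h$. Since $\Delta_h(E_B*L)$ and $\Delta_h(\psi*L)$ are absolutely continuous, everything reduces to showing $\Delta_hE_B*L$ and $\Delta_h\psi*L$ are right-continuous and of locally bounded variation. Using the renewal equations $E_B=K+(KB)*E_B$ and $\psi=uK+\big(f+\psi B+\tfrac12A(\psi)\big)*K$, I reduce each to $\Delta_hK*L$, plus convolutions of $\Delta_hK*L$ with the $L^1_{\rm loc}$ resolvent $R_B$ (which are absolutely continuous), plus boundary terms of the form $\int_0^h(\Delta_{h-u}K*L)(\cdots)\,du$ arising when the shift is commuted past the convolution. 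The hypothesis \eqref{T:aff_past:TVbound} gives that $\Delta_\eta K*L$ is of locally bounded variation uniformly in $\eta\le T$, and right-continuity follows from the continuity of $\Delta_\eta K$ for $\eta>0$; the superposition is then of locally bounded variation by this uniform bound. I expect this last point to be the main obstacle: in the scalar (or commuting) case the weights factor out of the convolution and the uniform total-variation bound applies at once, but for genuinely matrix-valued $K$ the weights sit between $K$ and $L$ and do not commute with $L$, so that controlling $\int_0^h\Delta_{h-u}K*(\cdots\,L)\,du$ requires propagating \eqref{T:aff_past:TVbound} through the resolvent, which is the delicate step.
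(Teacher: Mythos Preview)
Your plan for part~\ref{T:aff_past:1} is essentially the paper's: apply Lemma~\ref{L:ZX} with $F=\Delta_hE_B$, then use $E_B*L=\id-R_B*1$ and $E_BB=-R_B$ to identify terms. For part~\ref{T:aff_past:2} you take a genuinely different route. The paper starts from the conditional-expectation representation \eqref{Y}, substitutes \eqref{E:EtXT2} for each $\E[X_s\mid\Fcal_t]$, and collapses the resulting five-term sum using the key identity $\pi_h=u\Pi_h+(G(\psi)*\Pi_{\bullet})(h)$ with $G(\psi)=f+\tfrac12A(\psi)$, obtained from $\psi=uE_B+G(\psi)*E_B$ (Lemma~\ref{L:RicVol_res}). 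You instead work directly from the martingale form \eqref{eq:dY}--\eqref{eq:Y0}, apply Lemma~\ref{L:ZX} with $F=\Delta_h\psi$, and use that $\psi*L$ is absolutely continuous with density $f+\psi B+\tfrac12A(\psi)$. Your route is more economical and avoids the bookkeeping of the paper's ${\bf I}$--${\bf V}$ decomposition; the paper's route has the advantage of reducing everything in part~\ref{T:aff_past:2} to part~\ref{T:aff_past:1}.

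There is, however, a gap in your justification. You claim $\psi$ satisfies \eqref{K_gamma} by Example~\ref{ex:kernels}\ref{ex:kernels:v}, but that item requires the convolved function to be in $L^2_{\rm loc}$, whereas $G(\psi)=f+\tfrac12A(\psi)$ is in general only in $L^1$ (with $f\in L^1$ and $A(\psi)$ quadratic in $\psi\in L^2$). So you cannot invoke Lemma~\ref{L:Holder_bound} directly to get a continuous version of $\Delta_h\psi*dZ$, which you need for the ``up to indistinguishability'' clause in Lemma~\ref{L:ZX}. This is fixable: from $\psi=uE_B+G(\psi)*E_B$ one has $\Delta_h\psi*dZ=u\,\Delta_hE_B*dZ+(G(\psi)*\Delta_{\bullet}E_B)(h)*dZ+G(\Delta_h\psi)*(E_B*dZ)$, where the first piece is continuous as in the paper, the third is an $L^1$-convolution with the continuous process $E_B*dZ$, and the middle term is an $h$-integral of continuous processes $\Delta_sE_B*dZ$; but this needs to be said.

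Your concern about matrix non-commutativity in the regularity argument is an artifact of the decomposition you chose. Using the renewal form $E_B=K+(KB)*E_B$ makes $\Delta_hE_B*L$ recursive in $\Delta_uE_B*L$, and the weights sit awkwardly. The paper instead uses $E_B=K-R_B*K$ directly and obtains by Fubini
\[
(\Delta_hE_B*L)(t)=(\Delta_hK*L)(t)-\int_h^{t+h}R_B(s)\,ds-\int_0^h R_B(h-s)(\Delta_sK*L)(t)\,ds,
\]
where $R_B(h-s)$ multiplies $\Delta_sK*L$ from the \emph{left}, so the uniform TV bound \eqref{T:aff_past:TVbound} applies componentwise without any commutation issue. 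Likewise, the regularity of $\pi_h$ is cleanest via $\pi_h=u\Pi_h+(G(\psi)*\Pi_{\bullet})(h)$ rather than via \eqref{RicVol} directly; this again places the $L^1$ weight $G(\psi)$ on the left of $\Pi_s$, whose TV is controlled uniformly in $s$.
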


\begin{proof}
\ref{T:aff_past:1}: We wish to apply Lemma~\ref{L:ZX} with $F=\Delta_h E_B$ for any fixed $h\ge0$, so we first verify its hypotheses. 
Throughout the proof we will use the following identity for shifted convolutions
\begin{equation}\label{eq:shiftconvol}
\Delta_h(f * g)(t)=(\Delta_h f * g)(t)+(f*\Delta_t g)(h).
\end{equation}
Applying the shift operator $\Delta_h$ to the identity $E_B=K-R_B*K$ and using~\eqref{eq:shiftconvol} leads to
\[
\Delta_h E_B(t)=\Delta_h K(t)-(\Delta_h R_B*K)(t)-(R_B*\Delta_t K)(h).
\]
Convolving with $L$ and using the Fubini theorem yields 
\[
(\Delta_hE_B * L)(t) = (\Delta_hK*L)(t) - \int_h^{t+h}R_B(s)ds - \int_0^h R_B(h-s)(\Delta_s K*L)(t)ds.
\]
Owing to \eqref{T:aff_past:TVbound} we get the bound
\begin{equation}\label{E:TVEB}
\sup_{h\le T} \|\Delta_hE_B*L\|_{{\rm TV}(0,T)} < \infty,
\end{equation}
and using continuity of $K$ on $(0,\infty)$ we also get that $\Delta_hE_B*L$ is right-continuous on $\R_+$. In particular, in view of the identity
\begin{equation} \label{eq:L:aff_past:EB}
E_B*L=\id - R_B*\id,
\end{equation}
we deduce that $\Pi_h$ is right-continuous and of locally bounded variation as stated. Now, observe that $E_B=K-R_B*K$ is continuous on $(0,\infty)$, since this holds for $K$ and since $R_B$ and $K$ are both in $L^2_{\rm loc}$. Moreover, Example~\ref{ex:kernels}\ref{ex:kernels:iii} and~\ref{ex:kernels:v} imply that the components of $E_B$ satisfy \eqref{K_gamma}. As a result, Example~\ref{ex:kernels}\ref{ex:kernels:vi} shows that the components of $\Delta_hE_B$ satisfy \eqref{K_gamma} for any $h\ge0$. Fix $h=T-t$ and define
\[
{\textstyle Z=\int b(X)dt + \int \sigma(X)dW.}
\]
It follows from Lemma~\ref{L:Holder_bound} that $\Delta_hE_B*dZ$ has a continuous version. Lemma~\ref{L:ZX} with $F=\Delta_h E_B$ yields
\[
\Delta_h E_B * dZ = (\Delta_hE_B*L)(0)X - (\Delta_hE_B*L)X_0 + d(\Delta_hE_B*L)*X.
\]
Moreover, rearranging \eqref{E:EtXT1} and using \eqref{eq:L:aff_past:EB} gives
\[
\E[X_T\mid\Fcal_t] = (E_B*L)(T)X_0 + (E_B*\id)(h)b^0 + ( \Delta_h E_B * (dZ-BXdt))_t.
\]
Combining the previous two equalities and using the definition of $\Pi_h$ yields
\begin{align*}
\E[X_T\mid\Fcal_t] &= (E_B*\id)(h)b^0 + (\Delta_hE_B*L)(0)X_t - \Pi_h(t)X_0 \\
&\quad + ((d(\Delta_hE_B*L)-\Delta_hE_BBdt)*X)_t.
\end{align*}
The definition of $E_B$ and the resolvent equation \eqref{resolvent} show that $E_BB=-R_B$, which in combination with \eqref{eq:L:aff_past:EB} gives $E_BBdt=d(E_B*L)$. Thus \eqref{E:EtXT2} holds as claimed. This completes the proof of \ref{T:aff_past:1}.

\ref{T:aff_past:2}: 
Recall that Lemma~\ref{L:RicVol_res} gives $\psi=uE_B+G(\psi)*E_B$ where
\[
G(\psi)=f+\frac12A(\psi).
\]
Manipulating this equation and using the identity~\eqref{eq:shiftconvol} gives
\[
\Delta_h\psi(t)=u\Delta_h E_B(t) + (G(\psi)*\Delta_{t}E_B)(h)+(G(\Delta_h\psi)*E_B)(t).
\]
Convolving with $L$ and using Fubini yields
\begin{equation}\label{E:deltahpsiL}
\begin{aligned}
(\Delta_h\psi*L)(t)&=u(\Delta_h E_B*L)(t) + (G(\psi)*(\Delta_{\bullet}E_B*L)(t))(h) \\
&\quad+(G(\Delta_h\psi)*E_B*L)(t),
\end{aligned}
\end{equation}
where $(\Delta_{\bullet}E_B*L)(t)$ denotes the function $s\mapsto (\Delta_sE_B*L)(t)$. Similarly,
\begin{align*}
\Delta_h(\psi*L)(t)&=u\Delta_h(E_B*L)(t)+(G(\psi)*\Delta_{\bullet}(E_B*L)(t))(h)\\
&\quad +(G(\Delta_h\psi)*E_B*L)(t).
\end{align*}
Computing the difference between the previous two expressions gives
\begin{equation}\label{E:piandPi}
\pi_h(t)=u\Pi_h(t)+(G(\psi)*\Pi_{\bullet}(t))(h).
\end{equation}
In combination with \eqref{E:TVEB} and \eqref{eq:L:aff_past:EB}, as well as the properties of $\Pi_h$ that we have already proved, it follows that $\pi_h$ is right-continuous and of bounded variation as stated. Now, using Fubini we get
\begin{equation}\label{E:deltaf}
\E[(f*X)_T \mid\Fcal_t]=(\Delta_{T-t}f*X)_t+\int_0^{T-t}f(s)\E[X_{T-s}\mid\Fcal_t]\,ds.
\end{equation}
Combining~\eqref{Y}, \eqref{E:EtXT2}, and~\eqref{E:deltaf}, we obtain after some computations
\begin{equation}\label{E:TaffinepastI-V}
\begin{split}
Y_t &=(\Delta_{T-t}f*X)_t+\frac12\int_0^{T-t}\psi(s)A^0\psi(s)^\top\,ds\\
&\quad+\left(u(\id*E_B)(T-t) +\int_0^{T-t}G(\psi(s))(\id*E_B)(T-t-s)\,ds\right)b^0\\
&\quad+\left(u(\Delta_{T-t}E_B*L)(0) +\int_0^{T-t}G(\psi(s))(\Delta_{T-t-s}E_B*L)(0)\,ds\right)X_t\\
&\quad-\left(u\Pi_{T-t}(t)+\int_0^{T-t}G(\psi(s))\Pi_{T-t-s}(t)\,ds\right)X_0\\
&\quad+u(d\Pi_{T-t}*X)_t+\int_0^{T-t}G(\psi(s))(d\Pi_{T-t-s}*X)_t\,ds\\
&={\bf I} + {\bf II} + {\bf III} + {\bf IV}+{\bf V}.
\end{split}
\end{equation}
Here
\begin{equation}\label{E:IandII}
\begin{split}
{\bf I}+{\bf II}&=(\Delta_{T-t}f*X)_t+\frac12\int_0^{T-t}\psi(s)A^0\psi(s)^\top\,ds\\
&\quad +\left(\left(uE_B +G(\psi)*E_B\right)b^0*1\right)(T-t)\\
&=(\Delta_{T-t}f*X)_t+\phi(T-t).
\end{split}
\end{equation}
As a result of~\eqref{eq:L:aff_past:EB}, $E_B*L$ is continuous on $\R_+$, whence $(G(\Delta_h\psi)*E_B*L)(0)=0$. Evaluating~\eqref{E:deltahpsiL} at $t=0$ thus gives
\begin{equation}\label{E:III}
{\bf III}=(\Delta_{T-t}\psi*L)(0)X_t.
\end{equation}
As a consequence of~\eqref{E:piandPi},
\begin{equation}\label{E:IV}
{\bf IV}=-\pi_{T-t}(t)X_0.
\end{equation}
Finally, it follows from~\eqref{E:piandPi} that $d\pi_h=ud\Pi_h+\mu_h$, where $\mu_h(dt)=(G(\psi)*d\Pi_{\bullet}(dt))(h)$. Since for any bounded function $g$ on $[0,t]$ we have
\[
\int_{[0,t]}g(r)\mu_h(dr)=\int_0^h G(\psi(s))\left(\int_0^t g(r)d\Pi_{h-s}(dr)\right)\,ds,
\] 
we obtain
\begin{equation}\label{E:V}
{\bf V}=(d\pi_{T-t}*X)_t.
\end{equation}
Combining~\eqref{E:TaffinepastI-V}--\eqref{E:V} yields~\eqref{L:aff_past:2:eq} and completes the proof.
\end{proof}

\begin{remark}
Consider the classical case $K\equiv\id$. Then $L(dt)=\id\,\delta_0(dt)$, $R_B(t)=-B {\rm e}^{Bt}$, and $E_B(t)={\rm e}^{Bt}$. Thus $(\Delta_hE_B *L)(t)= {\rm e}^{B(t+h)}=\Delta_h(E_B *L)(t)$, so that \eqref{E:EtXT2} reduces to the well known expression $\E[X_T \mid\Fcal_t ] = {\rm e}^{B (T-t) } X_t + \int_0^{T-t} {\rm e}^{Bs} b^0ds$. In addition, in \eqref{L:aff_past:2:eq} the correction $\pi_h$ vanishes so that, if $f\equiv0$, the expression for $Y_t$ reduces to the classical form $\phi(T-t)+\psi(T-t)X_t$. 
\end{remark}

\begin{example}[Fractional affine processes]\label{E:faffine}
Let $K=\diag(K_1,\ldots,K_d)$, where
\[
K_i(t)=\frac{t^{\alpha_i-1}}{\Gamma(\alpha_i)}
\]
for some $\alpha_i\in(\frac12,1]$. Then $L=\diag(L_1,\ldots,L_d)$ with $L_i(dt) = \frac{t^{-\alpha_i}}{\Gamma(1-\alpha_i)}dt$ if $\alpha_i<1$, and $L_i(dt)= \delta_0(dt)$ if $\alpha_i=1$. It follows that $\chi_i = \psi_i * L_i = I^{1 - \alpha_i} \psi_i$, where $I^{1-\alpha_i}$ denotes the Riemann-Liouville fractional integral operator. Hence, \eqref{RicVol} and \eqref{E:phichi-1} reduce to the following  system of fractional Riccati equations,
\begin{align*}
\phi' &= \psi b^0  + \frac{1}{2}  \psi A^0 \psi^\top, && \phi(0)=0,\\
D^{\alpha_i} \psi_i &= f_i +  \psi b^i  + \frac{1}{2}  \psi A^i \psi^\top, \quad i=1,\ldots,d,  &&I^{1-\alpha}\psi(0)=u,
\end{align*}
where $D^{\alpha_i}= \frac {d}{dt} I^{1-\alpha_i}$ is the Riemann-Liouville fractional derivative.  Moreover, for $t=0$, \eqref{eq:expaff} reads
\begin{align*}
\E\left[{\rm e}^{u X_T + (f*X)_T}\right]= \exp \left(\phi(T) + I^{1-\alpha}\psi(T) X_0\right)
\end{align*}
where we write $I^{1-\alpha} \psi=(I^{1-\alpha_1}\psi_1, \ldots, I^{1-\alpha_d}\psi_d)$. This generalizes the expressions in \cite{EER:06,EER:07}. Notice that the identity $L_{\alpha_i}*K_{\alpha_i}\equiv1$ is equivalent to the identity $D^{\alpha_i} (I^{\alpha_i} f)=f$.
\end{example}

\section{The Volterra Ornstein--Uhlenbeck process} \label{S:VOU}

The particular specification of \eqref{a and b} where $A^1=\cdots=A^d=0$, so that $a\equiv A^0$ is a constant symmetric positive semidefinite matrix, yields an affine Volterra process with state space $E=\R^d$ that we call the {\em Volterra Ornstein--Uhlenbeck process}. It is the solution of the equation
\[
X_t = X_0 + \int_0^t K(t-s)(b^0 + BX_s)ds + \int_0^t K(t-s) \sigma dW_s,
\]
where $\sigma\in\R^{d\times d}$ is a constant matrix with $\sigma\sigma^\top=A^0$. Here existence and uniqueness is no issue. Indeed, Lemma~\ref{L:ZX} with $T=t$ yields the explicit formula
\[
X_t = \left(\id - \int_0^t R_B(s)ds\right) X_0 + \left( \int_0^t E_B(s)ds\right) b^0  + \int_0^t E_B(t-s)\sigma dW_s,
\]
where $R_B$ is the resolvent of $-KB$ and $E_B=K - R_B*K$. In particular $X_t$ is Gaussian. Furthermore, the solution of the Riccati--Volterra equation~\eqref{RicVol} is obtained explicitly via Lemma~\ref{L:RicVol_res} as
\[
\psi = uE_B + f*E_B.
\]
The quadratic variation of the process $Y$ in \eqref{eq:dY} is given by
\[
\langle Y\rangle_t = \int_0^t \psi(T-s) \sigma\sigma^\top \psi(T-s)^\top ds,
\]
and is in particular deterministic. The martingale condition in Theorem~\ref{T:cf} is thus clearly satisfied, and the exponential-affine transform formula \eqref{eq:expaff} holds for any $T<\infty$, $u\in(\C^d)^*$, and $f\in L^1([0,T],(\C^d)^*)$.

\section{The Volterra square-root process} \label{S:VSQRT}

We now consider affine Volterra processes whose state space is the nonnegative orthant $E=\R^d_+$. We let $K$ be diagonal with scalar kernels $K_i\in L^2_{\rm loc}(\R_+,\R)$ on the diagonal. The coefficients $a$ and $b$ in \eqref{a and b} are chosen so that $A^0=0$, $A^i$ is zero except for the $(i,i)$ element which is equal to $\sigma_i^2$ for some $\sigma_i>0$, and
\begin{equation} \label{sqrt2}
\text{$b^0\in\R^d_+$ and $B_{ij}\ge0$ for $i\ne j$.}
\end{equation}
The conditions on $a$ and $b$ are the same as in the classical situation $K\equiv\id$, in which case they are necessary and sufficient for \eqref{SVE} to admit an $\R^d_+$-valued solution for every initial condition $X_0\in\R^d_+$. With this setup, we obtain an affine Volterra process that we call the {\em Volterra square-root process}. It is the solution of the equation
\begin{equation} \label{VolSqrt}
X_{i,t} = X_{i,0} + \int_0^t K_i(t-s) b_i(X_s) ds + \int_0^t K_i(t-s) \sigma_i \sqrt{X_{i,s}}dW_{i,s}, \quad i=1,\ldots,d.
\end{equation}
The Riccati--Volterra equation \eqref{RicVol} becomes
\begin{equation} \label{RicVolSqrt}
\psi_i(t) = u_i K_i(t) + \int_0^t K_i(t-s) \left( f_i(s) + \psi(s)b^i + \frac{\sigma_i^2}{2} \psi_i(s)^2 \right)  ds, \quad i=1,\ldots,d.
\end{equation}
The following theorem is our main result on Volterra square-root processes.

\begin{theorem} \label{T:VolSqrt}
Assume each $K_i$ satisfies \eqref{K_gamma} and the shifted kernels $\Delta_h K_i$ satisfy \eqref{eq:K orthant} for all $h\in[0,1]$. Assume also that \eqref{sqrt2} holds.
\begin{enumerate}
\item\label{T:VolSqrt:1} The stochastic Volterra equation \eqref{VolSqrt} has a unique in law $\R^d_+$-valued continuous weak solution $X$ for any initial condition $X_0\in\R^d_+$. For each $i$, the paths of $X_i$ are H\"older continuous of any order less than $\gamma_i/2$, where $\gamma_i$ is the constant associated with $K_i$ in \eqref{K_gamma}.

\item\label{T:VolSqrt:2} For any $u\in(\C^d)^*$ and $f\in L^1_{\rm loc}(\R_+,(\C^d)^*))$ such that
\[
\text{${\rm Re\,} u_i \le0$ and ${\rm Re\,} f_i\le0$ for all $i=1,\ldots,d$,}
\]
the Riccati--Volterra equation \eqref{RicVolSqrt} has a unique global solution $\psi\in L^2_{\rm loc}(\R_+,(\C^d)^*)$, which satisfies ${\rm Re\,}\psi_i\le0$, $i=1,\ldots,d$. Moreover, the exponential-affine transform formula \eqref{eq:expaff} holds with $Y$ given by \eqref{eq:dY}--\eqref{Y}.
\end{enumerate}
\end{theorem}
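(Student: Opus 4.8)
The plan is to read off existence and path-regularity from the general theory of Sections~\ref{S:conv}--\ref{S:SVE}, and to reduce uniqueness in law to the transform formula proved in part~\ref{T:VolSqrt:2}. For existence I would verify the hypotheses of Theorem~\ref{T:existence orthant}: taking $h=0$ in the standing assumption shows each $K_i$ itself satisfies \eqref{eq:K orthant}, while the natural choice $\sigma(x)=\diag(\sigma_1\sqrt{x_1^+},\dots,\sigma_d\sqrt{x_d^+})$ is continuous, of linear growth, and meets the boundary conditions, since $\sigma_i(x)=0$ when $x_i=0$ and $x_i=0$ forces $b_i(x)=b^0_i+\sum_{j\ne i}x_jB_{ij}\ge0$ by \eqref{sqrt2} together with $x\in\R^d_+$. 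This yields an $\R^d_+$-valued continuous weak solution. For the Hölder claim, Lemma~\ref{L:moment bound} gives $\sup_{t\le T}\E[|X_t|^p]<\infty$ for every $p$, so the drift $b_i(X)$ and the bracket density $\sigma_i^2X_i$ of the martingale part have bounded $p$-th moments; Lemma~\ref{L:Holder_bound} then produces a version of $X_i$ Hölder of any order $<\gamma_i/2-1/p$, and letting $p\to\infty$ gives every order $<\gamma_i/2$.

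For part~\ref{T:VolSqrt:2} I would first obtain a local $L^2$ solution of \eqref{RicVolSqrt} by a contraction argument: the map $\psi\mapsto uK+(f+\psi B+\tfrac12A(\psi))*K$ sends $L^2([0,\delta])$ into itself (Young's inequality controls the convolution against the $L^2_{\rm loc}$ kernel, and $\|\psi_i^2\|_{L^1}=\|\psi_i\|_{L^2}^2$) and is a contraction on a small ball for $\delta$ small, the nonlinearity being locally Lipschitz. Continuing this to a \emph{global} solution is the step I expect to be the main obstacle, and the decisive tool is the sign condition $\mathrm{Re}\,\psi_i\le0$. To obtain it I would take real parts and read $w_i=\mathrm{Re}\,\psi_i$ as the solution of the \emph{linear} Volterra system
\[
w_i=(\mathrm{Re}\,u_i)K_i+K_i*\Big(r_i+\textstyle\sum_j M_{ij}w_j\Big),\qquad r_i=\mathrm{Re}\,f_i-\tfrac{\sigma_i^2}{2}(\mathrm{Im}\,\psi_i)^2\le0,
\]
where $M_{ij}=B_{ij}\ge0$ for $i\ne j$ by \eqref{sqrt2} and the diagonal coefficient $M_{ii}=B_{ii}+\tfrac{\sigma_i^2}{2}w_i$ absorbs the quadratic term. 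Because the source $(\mathrm{Re}\,u_i)K_i+K_i*r_i$ is nonpositive, the kernels $K_i$ are nonnegative, and the coupling is quasi-monotone (Metzler off-diagonal), I would argue that the nonpositive cone is forward invariant, proving $w_i\le0$; the assumed validity of \eqref{eq:K orthant} for the shifted kernels $\Delta_hK_i$ is what makes such a comparison/invariance argument available uniformly along the continuation, after approximating $K_i$ if necessary. The genuine difficulty is that the quadratic term renders the $w$-equation only quasi-linear and Volterra memory rules out a naive first-exit-time argument. Once $\mathrm{Re}\,\psi_i\le0$ is secured, the quadratic feedback is capped, giving an a priori $L^2_{\rm loc}$ bound that excludes finite-time blow-up and upgrades the local solution to a unique global one.

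It remains to establish the transform formula, for which Theorem~\ref{T:cf} already makes $\exp(Y)$ a local martingale, and I would promote it to a true martingale using the sign condition decisively. Since $\mathrm{Re}\,u_i\le0$, $\mathrm{Re}\,f_i\le0$ and $X\ge0$, the terminal value obeys $|\exp(Y_T)|=\exp(\mathrm{Re}(uX_T+(f*X)_T))\le1$, while writing $v_t=\psi(T-t)\sigma(X_t)$ and applying It\^o to \eqref{eq:dY} shows the drift of $\exp(\mathrm{Re}\,Y_t)$ equals $\tfrac12\sum_k(\mathrm{Im}\,v_{t,k})^2\ge0$, so $\exp(\mathrm{Re}\,Y)$ is a nonnegative local submartingale with integrable terminal. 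A standard localization and uniform-integrability argument, dominating the stopped family $\{\exp(Y_{T\wedge\tau_n})\}$ against $\{\E[\,|\exp(Y_T)|\mid\Fcal_{T\wedge\tau_n}]\}$, then delivers \eqref{eq:expaff}; here the global (non-exploding) existence of $\psi$ is what guarantees the integrability needed to pass to the limit. Finally this closes part~\ref{T:VolSqrt:1}: for any two $\R^d_+$-valued solutions the formula forces $\E[\exp(uX_T+(f*X)_T)]=\exp(Y_0)$, a number depending only on $X_0$, $u$, $f$ through the unique $\psi$; choosing $u$ and $f$ purely imaginary with $f$ a bump approximation of point masses at $t_1<\dots<t_n<T$ and passing to the limit by path-continuity and bounded convergence recovers the joint characteristic function of $(X_{t_1},\dots,X_{t_n},X_T)$, which therefore agrees for the two solutions and pins down all finite-dimensional distributions.
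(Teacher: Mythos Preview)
Your existence and path-regularity arguments for part~\ref{T:VolSqrt:1} match the paper's. However, there are two genuine gaps in part~\ref{T:VolSqrt:2}.

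First, the assertion that ``once $\mathrm{Re}\,\psi_i\le0$ is secured, the quadratic feedback is capped'' is unjustified. Knowing $\mathrm{Re}\,\psi_i\le0$ gives no control on $|\mathrm{Im}\,\psi_i|$ nor on how negative $\mathrm{Re}\,\psi_i$ can become, so $|\psi_i|^2$ and hence the quadratic term $\psi_i^2$ remain unbounded a~priori; nothing prevents $\|\psi\|_{L^2(0,T_{\rm max})}=\infty$. The paper (Lemma~\ref{L:sqrtRic}) closes this by constructing auxiliary \emph{linear} Volterra equations whose global solutions $h$ and $\ell$ satisfy $|{\rm Im}\,\psi_i|\le h_i$ and $\ell_i\le{\rm Re}\,\psi_i\le0$ on $[0,T_{\rm max})$; each comparison is a separate application of the invariance result Theorem~\ref{T:positive_2}. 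It is this two-sided box, not the one-sided sign condition, that rules out blow-up.

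Second, your martingale argument does not close. You correctly compute that $\exp({\rm Re}\,Y)$ has nonnegative drift and that $\exp({\rm Re}\,Y_T)\le1$, but a nonnegative local submartingale with integrable terminal value need not be a true submartingale. The claimed domination $|\exp(Y_{T\wedge\tau_n})|\le\E[\,|\exp(Y_T)|\mid\Fcal_{T\wedge\tau_n}]$ would require the submartingale property on the interval $[T\wedge\tau_n,T]$, precisely where the localization gives you nothing; the step is circular. The paper takes a different route: it invokes Theorem~\ref{T:aff_past}\ref{T:aff_past:2} to express ${\rm Re}\,Y_t$ explicitly in terms of $\psi^{\bf r}$, $\pi_h^{\bf r}$, and the path of $X$, and checks term by term---using ${\rm Re}\,\psi_i\le0$, $b^0\in\R^d_+$, $X\in\R^d_+$, and the monotonicity of $\pi_h^{\bf r}$ coming from \eqref{eq:K orthant}---that ${\rm Re}\,Y_t\le0$ for \emph{every} $t\in[0,T]$. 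Thus $\exp(Y)$ is outright bounded, and the martingale property is immediate.
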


\begin{example}
A sufficient condition for $K_i$ to satisfy the assumptions of Theorem~\ref{T:VolSqrt} is that it satisfies \eqref{K_gamma} and is completely monotone and not identically zero; see Example~\ref{E:CM}. This covers in particular the gamma kernel $t^{\alpha-1}{\rm e}^{-\beta t}$ with $\alpha\in(\frac12,1]$ and $\beta\ge0$.
\end{example}

\begin{proof}
Thanks to \eqref{sqrt2} and the form of $\sigma(x)$, Theorem~\ref{T:existence orthant} yields an $\R^d_+$-valued continuous weak solution $X$ of \eqref{VolSqrt} for any initial condition $X_0\in\R^d_+$. The stated path regularity then follows from the last statement of Lemma~\ref{L:Holder_bound}.

Next, the existence, uniqueness, and non-positivity statement for the Riccati--Volterra equation \eqref{RicVolSqrt} is proved in Lemma~\ref{L:sqrtRic} below. Thus in order to apply Theorem~\ref{T:cf} to obtain the exponential-affine transform formula, it suffices to argue that ${\rm Re\,}Y_t$ is bounded above on $[0,T]$, since $\exp(Y)$ is then bounded and hence a martingale. This is done using Theorem~\ref{T:aff_past}, and we start by observing that 
\[
\pi^{\bf r}_{h,i}(t) = - \int_0^h \psi^{\bf r}_i(h-s)L_i(t+ds), \quad t\ge0,
\]
where $\pi_h = \Delta_h\psi *L - \Delta_h(\psi*L)$ and we write $\pi^{\bf r}_h={\rm Re\,}\pi_h$ and $\psi^{\bf r}={\rm Re\,}\psi$. Due to the assumption \eqref{eq:K orthant} on $L_i$ and since $-\psi^{\bf r}_i\ge0$, it follows that $\pi^{\bf r}_{h,i}$ is nonnegative and non-increasing. 

As in the proof of Theorem~\ref{T:existence orthant}, each $K_i$ satisfies \eqref{eq:orth:1} and \eqref{eq:orth:2}. This implies that the total variation bound \eqref{T:aff_past:TVbound} holds, so that Theorem~\ref{T:aff_past}\ref{T:aff_past:2} yields
\[
{\rm Re\,}Y_t  =  {\rm Re\,}\phi(h) + ({\rm Re\,}\Delta_h f *X)_t + (\Delta_h\psi^{\bf r}*L)(0)X_t - \pi^{\bf r}_h(t)X_0 + (d\pi^{\bf r}_h * X)_t
\]
where $h=T-t$ and, since $A^0=0$,
\[
\phi(h)=\int_0^h \psi(s)b^0 ds.
\]
Observe that $\psi^{\bf r}$, $(\Delta_h\psi^{\bf r}*L)(0)$, ${\rm Re\,}\Delta_h f$, $-\pi^{\bf r}_h$, and $d\pi^{\bf r}_h$ all have nonpositive components. Since $b^0$ and $X$ take values in $\R^d_+$ we thus get
\[
{\rm Re\,}Y_t \le 0.
\]
Thus $\exp(Y)$ is bounded, whence Theorem~\ref{T:cf} is applicable and the exponential-affine transform formula holds.

It remains to prove uniqueness in law for $X$. The law of $X$ is determined by the Laplace transforms $\E[\exp(-\sum_{i=1}^n\lambda_i X_{t_i})]$ with $n\in\N$, $\lambda_i\in(\R^d)^*$ with nonnegative components, and $t_i\ge0$. Uniqueness thus follows since these Laplace transforms are approximated by the quantities $\E[\exp( (f*X)_T)]$ as $f$ ranges through all $(\R^d)^*$-valued continuous functions $f$ with nonpositive components, and $T$ ranges through~$\R_+$.
%It remains to prove uniqueness in law for $X$. This follows since the law of $X$ is determined by the Laplace functionals $\E[\exp( (f*X)_T)]$ as $f$ ranges through, say, all $(\R^d)^*$-valued continuous functions $f$ with nonpositive components, and $T$ ranges through~$\R_+$.
\end{proof}

\begin{lemma} \label{L:sqrtRic}
Assume $K$ is as in Theorem~\ref{T:VolSqrt}. Let $u\in(\C^d)^*$ and $f\in L^1_{\rm loc}(\R_+,(\C^d)^*))$ satisfy
\[
\text{${\rm Re\,} u_i \le0$ and ${\rm Re\,} f_i\le0$ for all $i=1,\ldots,d$.}
\]
Then the Riccati--Volterra equation \eqref{RicVolSqrt} has a unique global solution $\psi\in L^2_{\rm loc}(\R_+,(\C^d)^*)$, and this solution satisfies ${\rm Re\,}\psi_i\le0$, $i=1,\ldots,d$.
\end{lemma}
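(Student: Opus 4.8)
The plan is to treat \eqref{RicVolSqrt} as a fixed-point problem for the map $\Phi$ sending $\psi \mapsto uK + \big(f + \psi B + \tfrac12 A(\psi)\big)*K$ on $L^2_{\rm loc}(\R_+,(\C^d)^*)$, and to proceed in three stages: local well-posedness, an a priori sign bound ${\rm Re}\,\psi_i\le0$, and global continuation.

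First I would establish local existence and uniqueness by a contraction argument. Since each $K_i\in L^2_{\rm loc}$ and, for $\psi\in L^2(0,T)$, the integrand $f_i + \psi b^i + \tfrac{\sigma_i^2}{2}\psi_i^2$ lies in $L^1(0,T)$, Young's inequality shows $\Phi$ maps $L^2(0,T)$ into itself. On a ball of radius $\rho$ the quadratic term is Lipschitz: using $\|\psi_i^2-\tilde\psi_i^2\|_{L^1}\le\|\psi_i+\tilde\psi_i\|_{L^2}\|\psi_i-\tilde\psi_i\|_{L^2}$ and convolving against $K_i$, the Lipschitz constant is of order $\rho\,\|K_i\|_{L^2(0,T)}$. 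Hence for $T$ small $\Phi$ is a self-map and a contraction on the ball, giving a unique solution on a maximal interval $[0,T^*)$ together with the blow-up alternative that $T^*<\infty$ forces $\|\psi\|_{L^2(0,t)}\to\infty$ as $t\uparrow T^*$.

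Second, and this is the crux, I would prove ${\rm Re}\,\psi_i\le0$ on $[0,T^*)$ by a deterministic invariance argument mirroring the proof of Theorem~\ref{T:existence orthant}. Writing $w=-{\rm Re}\,\psi$, the goal is $w\in\R^d_+$, and the sign hypotheses are tailored to exactly this: ${\rm Re}\,u_i\le0$ and ${\rm Re}\,f_i\le0$ together with $K_i\ge0$ make the affine source of $w$ nonnegative, the off-diagonal condition $B_{ij}\ge0$ ($i\ne j$) in \eqref{sqrt2} handles the linear coupling, and the diagonal drift $B_{ii}$ is absorbed through the resolvent $E_B$ as in Lemma~\ref{L:RicVol_res}. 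The quadratic term is controlled by the boundary observation that where ${\rm Re}\,\psi_i=0$, i.e.\ $\psi_i$ is purely imaginary, one has ${\rm Re}(\psi_i^2)=-({\rm Im}\,\psi_i)^2\le0$, so at the boundary the quadratic pushes ${\rm Re}\,\psi_i$ in the desired direction. To turn these sign facts into invariance without a Markovian structure, I would invoke the resolvent of the first kind $L_i$ of the shifted kernels $\Delta_hK_i$: exactly as in \eqref{eq:orth:1}--\eqref{eq:orth:2}, the maps $\Delta_hK_i*L_i$ are nondecreasing and $[0,1]$-valued, forcing the convolution ``memory'' of $w$ at an exit time to have the correct sign and reducing the matter to the fresh source governed by the boundary analysis above.

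The main obstacle is the low regularity: $\psi$ is only $L^2_{\rm loc}$ and is genuinely singular at $0$ (like $uK$), so the exit-time argument cannot be run pointwise as in Theorem~\ref{T:existence orthant}. I would circumvent this by approximating each $K_i$ by locally Lipschitz kernels satisfying \eqref{eq:K orthant}, for which the corresponding solution is continuous on $(0,\infty)$ so the sign argument applies verbatim and the behaviour near $0$ is dominated by ${\rm Re}(u_iK_i)\le0$, and then passing to the $L^2$ limit. Finally, for global existence I would combine the sign bound with an a priori $L^2$ bound on $[0,T]$. Convolving \eqref{RicVolSqrt} with $L_i$ gives ${\rm Re}(\psi_i*L_i)\le0$, which yields $\int_0^t({\rm Im}\,\psi_i)^2\,ds \le C\big(1+\int_0^t|{\rm Re}\,\psi_i|^2\,ds\big)$; the remaining task is to bound $\int_0^t|{\rm Re}\,\psi_i|^2\,ds$, where the point is that the sign constraint is precisely what prevents the quadratic term from driving blow-up, the goal being to turn the a priori estimate into a linear Volterra--Gronwall inequality via the dissipative term $-\tfrac{\sigma_i^2}{2}K_i*w_i^2$ and the nonnegative, non-increasing $L_i$. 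The resulting bound contradicts the blow-up alternative, so $T^*=\infty$; uniqueness on $[0,T^*)$ is already secured by the contraction, yielding the unique global solution with ${\rm Re}\,\psi_i\le0$.
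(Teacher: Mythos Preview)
Your local existence argument and your instinct for the sign bound are sound, and indeed the key boundary observation ${\rm Re}(\psi_i^2)=-({\rm Im}\,\psi_i)^2\le0$ when ${\rm Re}\,\psi_i=0$ is exactly what drives the proof. However, the paper organizes the sign step more efficiently than you do: rather than running an exit-time argument directly on the nonlinear equation and approximating the kernel, it simply observes that $-\psi^{\bf r}$ satisfies a \emph{linear} Volterra equation (with $\psi^{\bf r},\psi^{\bf i}$ treated as known $L^2$ coefficients) whose forcing is nonnegative, and then invokes the invariance result Theorem~\ref{T:positive_2} once. Your route would eventually arrive at the same place, but you are essentially reproving Theorem~\ref{T:positive_2} inside the argument.

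The real gap is in your global-existence step. Convolving with $L_i$ and using $\psi^{\bf r}_i\le0$ gives
\[
0 \ge (L_i*\psi^{\bf r}_i)(t) = {\rm Re}\,u_i + \int_0^t\Big({\rm Re}\,f_i + \psi^{\bf r}b^i + \tfrac{\sigma_i^2}{2}\big((\psi^{\bf r}_i)^2-(\psi^{\bf i}_i)^2\big)\Big)ds,
\]
which bounds $\int_0^t(\psi^{\bf r}_i)^2ds$ \emph{above} by $\int_0^t(\psi^{\bf i}_i)^2ds$ plus lower-order terms, i.e.\ the opposite direction to what you write. You are then left needing an a priori bound on $\psi^{\bf i}$, but the equation for $\psi^{\bf i}$ has the coupling term $\sigma_i^2\psi^{\bf r}_i\psi^{\bf i}_i$ with the unbounded coefficient $\psi^{\bf r}_i$, so a naive Gronwall argument does not close; the ``dissipative'' intuition from $\psi^{\bf r}_i\le0$ is correct in spirit but does not translate directly into an $L^2$ estimate. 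The paper resolves this by a comparison argument: it introduces auxiliary \emph{global} solutions $g,h,\ell$ of linear Volterra equations (existing by Corollary~\ref{C:linearVE}) and uses Theorem~\ref{T:positive_2} repeatedly to obtain $|\psi^{\bf i}_i|\le g_i\le h_i$ and $\ell_i\le\psi^{\bf r}_i\le0$ on $[0,T_{\max})$. Since $h$ and $\ell$ are global, these sandwich bounds force $T_{\max}=\infty$. This comparison device is the missing ingredient in your sketch.
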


\begin{proof}
By Theorem~\ref{T:local Volterra} there exists a unique non-continuable solution $(\psi,T_{\rm max})$ of \eqref{RicVolSqrt}.  Let $\psi^{\bf r}$ and $\psi^{\bf i}$ denote the real and imaginary parts of $\psi$. They satisfy the equations
\begin{align*}
\psi^{\bf r}_i &= ({\rm Re\,}u_i) K_i + K_i * \left( {\rm Re\,}f_i + \psi^{\bf r}b^i  + \frac{\sigma_i^2}{2} \left( (\psi^{\bf r}_i)^2 - (\psi^{\bf i}_i)^2\right)\right) \\
\psi^{\bf i}_i &= ({\rm Im\,}u_i) K_i + K_i * \left( {\rm Im\,}f_i + \psi^{\bf i}b^i  + \sigma_i^2 \psi^{\bf r}_i \psi^{\bf i}_i\right)
\end{align*}
on $[0,T_{\rm max})$. Moreover, on this interval, $-\psi^{\bf r}_i$ satisfies the linear equation
\[
\chi_i = -({\rm Re\,}u_i) K_i + K_i * \left( -{\rm Re\,}f_i + \chi b^i  + \frac{\sigma_i^2}{2} \left( (\psi^{\bf i}_i)^2 + \chi_i \psi^{\bf r}_i\right)\right).
\]
Due to \eqref{sqrt2} and since ${\rm Re\,}u$ and ${\rm Re\,}f$ both have nonpositive components, Theorem~\ref{T:positive_2} yields $\psi^{\bf r}_i\le0$, $i=1,\ldots,d$. Next, let $g\in L^2_{\rm loc}([0,T_{\rm max}),(\R^d)^*)$ and $h,\ell \in L^2_{\rm loc}(\R_+,(\R^d)^*)$ be the unique solutions of the linear equations
\begin{align*}
g_i &= |{\rm Im\,}u_i| K_i + K_i * \left( |{\rm Im\,}f_i| + gb^i + \sigma_i^2\psi^{\bf r}_i g_i \right) \\
h_i &= |{\rm Im\,}u_i| K_i + K_i * \left( |{\rm Im\,}f_i| + hb^i \right) \\
\ell_i &= ({\rm Re\,}u_i) K_i + K_i * \left( {\rm Re\,}f_i + \ell b^i - \frac{\sigma_i^2}{2}h_i^2 \right).
\end{align*}
These solutions exist on $[0,T_{max})$ thanks to Corollary~\ref{C:linearVE}. We now perform multiple applications of Theorem~\ref{T:positive_2}. The functions $g\pm \psi^{\bf i}$ satisfy the equations
\[
\chi_i = 2({\rm Im\,}u_i)^{\pm} K_i + K_i * \left( 2({\rm Im\,}f_i)^{\pm} + \chi b^i + \sigma_i^2\psi^{\bf r}_i \chi_i \right)
\]
on $[0,T_{\rm max})$, so $|\psi^{\bf i}_i| \le g_i$ on $[0,T_{\rm max})$ for all $i$. Similarly, $h-g$ satisfies the equation
\[
\chi_i = K_i*\left( \chi b^i - \sigma_i^2 \psi^{\bf r}_i g_i \right)
\]
on $[0,T_{\rm max})$, so $g_i\le h_i$ on $[0,T_{\rm max})$. Finally, $\psi^{\bf r} - \ell$ satisfies the equation
\[
\chi_i = K_i * \left( \chi b^i + \frac{\sigma_i^2}{2} \left( (\psi^{\bf r}_i)^2 + h_i^2 - (\psi^{\bf i}_i)^2 \right) \right),
\]
on $[0,T_{\rm max})$, so $\ell_i\le \psi^{\bf r}_i$ on $[0,T_{\rm max})$. In summary, we have shown that
\[
\text{$\ell_i \le \psi^{\bf r}_i \le 0$ and $|\psi^{\bf i}_i| \le h_i$ on $[0,T_{\rm max})$ for $i=1,\ldots,d$.}
\]
Since $\ell$ and $h$ are global solutions and thus have finite norm on any bounded interval, this implies that $T_{\rm max}=\infty$ and completes the proof of the lemma.
\end{proof}

\section{The Volterra Heston model} \label{S:VH}
We now consider an affine Volterra process with state space $\R\times\R_+$, which can be viewed as a generalization of the classical \cite{heston1993closed} stochastic volatility model in finance, and which we refer to as the {\em Volterra Heston model}. We thus take $d=2$ and consider the process $X=(\log S, V)$, where the price process $S$ and its variance process $V$ are given by
\begin{equation}\label{E:HestonlogS}
\frac{dS_t}{S_t} =  \sqrt{V_t} \,\big( \sqrt{1 - \rho^2} \,dW_{1,s} +  \rho \,dW_{2,s}\big), \qquad S_0\in(0,\infty), 
\end{equation}
and
\begin{equation}\label{E:HestonV}
V_t = V_0 +  \int_0^t K(t-s) \left( \kappa(\theta - V_s)ds +  \sigma \sqrt{V_s}\,dW_{2,s}\right),
\end{equation}
with kernel $K \in L^2_{\rm loc}(\R_+,\R)$, a standard Brownian motion $W=(W_1,W_2)$, and parameters $V_0,\kappa,\theta,\sigma \in\R_+ $ and $\rho \in [-1,1]$. Here the notation has been adapted to comply with established conventions in finance. Weak existence and uniqueness of $V$ follows from Theorem~\ref{T:VolSqrt} under suitable conditions on $K$. This in turn determines $S$. Moreover, observe that the log-price satisfies
\[
\log S_t = \log S_0 -\int_0^t \frac{V_s}{2} \,ds +  \int_0^t \sqrt{V_s}\,\big( \sqrt{1 - \rho^2} \,dW_{1,s} +  \rho \,dW_{2,s}\big).
\]
Therefore the process $X=(\log S,V)$ is indeed an affine Volterra process with diagonal kernel $\mbox{diag}(1,K)$ and coefficients $a$ and $b$ in \eqref{a and b} given by
\begin{align*}
A^0 &=A^1=0, \quad A^2= \left( \begin{array}{cc} 1 & \rho \sigma    \\  \rho \sigma  & \sigma^2  \end{array} \right), \nonumber \\
b^0 &= \left( \begin{array}{c} 0     \\   \kappa \theta \end{array} \right),\quad B=\left( \begin{array}{cc} 0 & -\frac12      \\  0 & -\kappa\end{array} \right).
\end{align*}
The Riccati--Volterra equation \eqref{RicVol} takes the form
\begin{align}
\psi_1 &= u_1+1*f_1,\label{E:RicHeston0} \\
\psi_2 &=u_2K +K*\left(f_2+\frac12\left( \psi_1^2-\psi_1\right) - \kappa  \psi_2 + \frac{1}{2}\left(\sigma^2 \psi^2_2 + 2 \rho \sigma \psi_1\psi_2\right)\right). \label{E:RicHeston1}
\end{align}

\begin{theorem} \label{T:VolterraHeston}
Assume $K$ satisfies \eqref{K_gamma} and the shifted kernels $\Delta_h K$ satisfy \eqref{eq:K orthant} for all $h\in[0,1]$.
\begin{enumerate}
\item\label{T:VolHeston:1} The stochastic Volterra equation \eqref{E:HestonlogS}-\eqref{E:HestonV} has a unique in law $\R\times \R_+$-valued continuous weak solution $(\log S,V)$ for any initial condition $(\log S_0,V_0) \in\R\times\R_+$. The paths of $V$ are H\"older continuous of any order less than $\gamma/2$, where $\gamma$ is the constant associated with $K$ in \eqref{K_gamma}.

\item\label{T:VolHeston:2} Let $u\in(\C^2)^*$ and $f\in L^1_{\rm loc}(\R_+,(\C^2)^*))$ be such that
\[
\text{${\rm Re\,} \psi_1 \in[0,1]$, ${\rm Re\,} u_2 \le0$ and ${\rm Re\,} f_2\le0$.}
\]
where $\psi_1$ is given by \eqref{E:RicHeston0}. Then the Riccati--Volterra equation~\eqref{E:RicHeston1} has a unique global solution $\psi_2\in L^2_{\rm loc}(\R_+,\C^*)$, which satisfies ${\rm Re\,}\psi_2\le0$. Moreover, the exponential-affine transform formula \eqref{eq:expaff} holds with $Y$ given by \eqref{eq:dY}--\eqref{Y}.

\item\label{T:VolHeston:3} The process $S$ is a martingale.
\end{enumerate}
\end{theorem}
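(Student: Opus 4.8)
The plan is to show directly that $\E[S_T]=S_0$ for every $T<\infty$. Since $S$ is the stochastic exponential $S=S_0\Ecal(M)$ of the continuous local martingale $M=\int\sqrt{V_s}\,(\sqrt{1-\rho^2}\,dW_{1,s}+\rho\,dW_{2,s})$, with $\langle M\rangle_t=\int_0^t V_s\,ds$, it is a nonnegative continuous local martingale, hence a supermartingale; the equality $\E[S_T]=S_0$ then upgrades it to a true martingale. I would localize with $\tau_n=\inf\{t\ge0\colon\int_0^t V_s\,ds\ge n\}$, which increase to $\infty$ almost surely because $V$ has continuous paths. Each stopped process $S^{\tau_n}$ has quadratic variation at most $n$, so Novikov's criterion makes $S^{\tau_n}$ a uniformly integrable martingale, giving $\E[S_{T\wedge\tau_n}]=S_0$. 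Decomposing $\E[S_{T\wedge\tau_n}]=\E[S_T\mathbf{1}_{\{\tau_n>T\}}]+\E[S_{\tau_n}\mathbf{1}_{\{\tau_n\le T\}}]$ and letting $n\to\infty$, monotone convergence yields $\E[S_T\mathbf{1}_{\{\tau_n>T\}}]\uparrow\E[S_T]$, so the martingale property reduces to the claim that $\E[S_{\tau_n}\mathbf{1}_{\{\tau_n\le T\}}]\to0$.

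To prove this I would change measure on $\Fcal_{T\wedge\tau_n}$ to $\Q_n$ with density $S_{T\wedge\tau_n}/S_0$, a genuine probability measure precisely because $S^{\tau_n}$ is a true martingale. Then $\E[S_{\tau_n}\mathbf{1}_{\{\tau_n\le T\}}]=S_0\,\Q_n[\tau_n\le T]$, and since $\{\tau_n\le T\}=\{\int_0^{T\wedge\tau_n}V_s\,ds\ge n\}$, Markov's inequality gives $\Q_n[\tau_n\le T]\le\frac1n\,\E_{\Q_n}[\int_0^{T\wedge\tau_n}V_s\,ds]$. Everything therefore hinges on bounding the right-hand side uniformly in $n$. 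By Girsanov's theorem, $\tilde W_2=W_2-\rho\int_0^\cdot\sqrt{V_s}\,ds$ is a $\Q_n$-Brownian motion up to $T\wedge\tau_n$, and on that interval $V$ solves the modified stochastic Volterra equation with the same kernel $K$ and diffusion $\sigma\sqrt{V}$ but mean-reversion speed $\kappa$ replaced by $\kappa-\rho\sigma$:
\[
V_t=V_0+\int_0^t K(t-s)\big(\kappa\theta-(\kappa-\rho\sigma)V_s\big)\,ds+\int_0^t K(t-s)\,\sigma\sqrt{V_s}\,d\tilde W_{2,s}.
\]

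The final step is to compare with the \emph{unstopped} modified Volterra square-root process $\hat V$ solving this same equation globally under its own measure. Because $\kappa\theta\ge0$, Theorem~\ref{T:VolSqrt} applies to $\hat V$ for any sign of $\kappa-\rho\sigma$ and yields existence and uniqueness in law; moreover, exactly as in Lemma~\ref{L:mean}, its mean $\bar V(t)=\E[\hat V_t]$ solves the deterministic linear Volterra equation $\bar V=V_0+K*(\kappa\theta-(\kappa-\rho\sigma)\bar V)$, whose solution is continuous and hence finite on $[0,T]$. Identifying the $\Q_n$-law of the stopped process $(V_{t\wedge\tau_n})_{t\le T}$ with that of $\hat V$ stopped at its own analogue of $\tau_n$, and using $\hat V\ge0$, I would obtain
\[
\E_{\Q_n}\Big[\int_0^{T\wedge\tau_n}V_s\,ds\Big]\le\E\Big[\int_0^{T}\hat V_s\,ds\Big]=\int_0^T\bar V(s)\,ds=:C_T<\infty
\]
uniformly in $n$, whence $\Q_n[\tau_n\le T]\le C_T/n\to0$, as desired.

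The main obstacle is this last comparison, because optional stopping at $\tau_n$ interacts badly with the convolution kernel: unlike in the Markovian case $K\equiv\id$, the memory in $K$ prevents one from naively taking expectations in the stopped Volterra equation, since the kernel would be evaluated at the random time $t\wedge\tau_n$. The way around it is to avoid writing an equation for the stopped mean and instead transfer the estimate to the genuinely global process $\hat V$. The change of measure $\Q_n$ is unconditionally legitimate on $\Fcal_{T\wedge\tau_n}$ because the stopped martingale part has quadratic variation at most $\sigma^2 n$, and weak uniqueness for the Volterra square-root equation, furnished by Theorem~\ref{T:VolSqrt}, pins down the $\Q_n$-law of $V$ up to $\tau_n$. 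Nonnegativity of $\hat V$ then absorbs the stopping into a harmless upper bound, and finiteness of $\bar V$ on $[0,T]$—valid for either sign of the effective mean-reversion $\kappa-\rho\sigma$—supplies the uniform constant $C_T$.
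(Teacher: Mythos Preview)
Your proposal addresses only part~\ref{T:VolHeston:3}; parts~\ref{T:VolHeston:1} and~\ref{T:VolHeston:2} still require the appeal to Theorem~\ref{T:VolSqrt}, the Riccati analysis, and the verification via Theorem~\ref{T:aff_past} that $\exp(Y)$ is a true martingale.

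For part~\ref{T:VolHeston:3} your architecture---localize, change measure via $\Q_n$, and show $\Q_n[\tau_n\le T]\to0$---is precisely the paper's. The gap is the comparison with $\hat V$. You invoke weak uniqueness from Theorem~\ref{T:VolSqrt} to ``pin down the $\Q_n$-law of $V$ up to $\tau_n$,'' but that theorem gives uniqueness in law only for \emph{global} solutions of the $\hat V$ equation. Under $\Q_n$ the process $V$ is \emph{not} a global solution of that equation: Girsanov modifies the drift only on $\lc0,\tau_n\rc$, so on $[0,T]$ the equation $V$ actually satisfies under $\Q_n$ has the $\omega$-dependent drift $\kappa\theta-(\kappa-\rho\sigma\bm1_{\lc0,\tau_n\rc})V$, not $\kappa\theta-(\kappa-\rho\sigma)V$. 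In the Markovian case one could either deduce local uniqueness from the martingale problem or extend the stopped solution past $\tau_n$ to a global one; for Volterra equations neither step is available from the results at hand---restarting at a random time with a prescribed past is exactly the non-Markovian existence problem the paper does not address. The identification of the $\Q_n$-law of $V_{\cdot\wedge\tau_n}$ with the law of $\hat V_{\cdot\wedge\hat\tau_n}$ is therefore unjustified.

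The paper sidesteps this entirely. Rather than compare with an auxiliary process, it applies the a~priori moment bound of Lemma~\ref{L:moment bound} (in the form of Remark~\ref{R:moment bound}, which permits predictable $(t,\omega)$-dependent coefficients with linear growth uniform in $(t,\omega)$) \emph{directly} to the equation $V$ satisfies under $\Q^n$. The indicator $\bm1_{\lc0,\tau_n\rc}$ does not affect the uniform linear-growth constant, so $\sup_{t\le T}\E_{\Q^n}[V_t^p]\le c$ with $c$ independent of $n$; Lemma~\ref{L:Holder_bound} then controls $\Q^n(\sup_{t\le T}V_t>n)$ by $c'(n-V_0)^{-p}$. (The paper uses $\tau_n=\inf\{t:V_t>n\}\wedge T$, but your integrated-variance stopping time would work equally well once you replace the $\hat V$ comparison by this direct moment bound: $\Q_n[\tau_n\le T]\le n^{-1}\int_0^T\E_{\Q_n}[V_s]\,ds\le Tc^{1/p}/n$.) No law-identification and no auxiliary process are needed.
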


\begin{proof}
As already mentioned above, part~\ref{T:VolHeston:1} follows directly from Theorem~\ref{T:VolSqrt} along with the fact that $S$ is determined by $V$. Part~\ref{T:VolHeston:3} is proved in Lemma~\ref{T:Hestonmgle_new} below. The existence, uniqueness, and non-positivity statement for the Riccati--Volterra equation \eqref{E:RicHeston1} is proved in Lemma~\ref{L:HestonRic} below. Thus in order to apply Theorem~\ref{T:cf} to obtain the exponential-affine transform formula, it suffices to argue that $\exp(Y)$ is a martingale. This is done using Theorem~\ref{T:aff_past} and part~\ref{T:VolHeston:3}. As the argument closely parallels that of the proof of Theorem~\ref{T:VolSqrt}, we only provide an outline. We use the notation of Theorem~\ref{T:aff_past} and Theorem~\ref{T:VolSqrt}, in particular $\pi_h$ and $\pi_h^{\bf r}={\rm Re\,}\pi_h$, and let $L$ be the resolvent of the first kind of $K$. Theorem~\ref{T:aff_past} is applicable and gives
\begin{align}
{\rm Re\,}Y_t  &= \psi_1^{\bf r}(h) \log S_t+({\rm Re\,}\Delta_h f_1 *\log S)_t+{\rm Re\,}\phi(h) +(\Delta_h\psi_2^{\bf r}*L)(0)V_t \notag \\
&\quad + ({\rm Re\,}\Delta_h f_2 *V)_t - \pi^{\bf r}_{h,2}(t)V_0 + (d\pi^{\bf r}_{h,2} * V)_t\label{eq:VoltHestonaffinepast}
\end{align}
where $h=T-t$ and
\[
\phi(h)=\kappa\theta\int_0^h\psi_2(s)\,ds.
\]
Since $\psi_1^{\bf r} \in[0,1]$, integration by parts yields
\begin{align*}
\psi^{\bf r}_1(h)\log S_t+({\rm Re\,}\Delta_h f_1 *\log S)_t&=\psi_1^{\bf r}(T)\log S_0+\int_0^t \psi_1^{\bf r}(T-s)\,d\log S_s\\
&\leq \psi_1^{\bf r}(T)\log S_0+U_t - \frac12\langle U\rangle_t,
\end{align*}
where
\[
U_t= \int_0^t \psi_1^{\bf r}(T-s)\sqrt{V_s}\,\big( \sqrt{1 - \rho^2} \,dW_{1,s} +  \rho \,dW_{2,s}\big).
\]
This observation and inspection of signs and monotonicity properties applied to~\eqref{eq:VoltHestonaffinepast} show that 
\[
|\exp(Y_t)|=\exp({\rm Re\,}Y_t)\leq S_0^{\psi_1^{\bf r}(T)}\exp(U_t - \frac12\langle U\rangle_t),
\]
where the right-hand side is a true martingale by Lemma \ref{T:Hestonmgle_new}. Thus $\exp(Y)$ is a true martingale, Theorem~\ref{T:cf} is applicable, and the exponential-affine transform formula holds.
\end{proof}

\begin{example}[Rough Heston model] In the fractional case $K(t)=\frac{t^{1-\alpha}}{\Gamma(\alpha)}$ with $\alpha \in (\frac 12, 1)$ we recover the rough Heston model introduced and studied by \cite{EER:06,EER:07}.  Theorem~\ref{T:VolterraHeston} generalizes some of their main results. For instance, with the notation of Example~\ref{E:faffine} and using that $L(dt)= \frac{t^{-\alpha}}{\Gamma(1-\alpha)}dt$, we have
\[
\chi = ( \psi_1 , I^{1-\alpha} \psi_2 ),
\]
which yields the full Fourier--Laplace functional with integrated log-price and variance,
\[
\E\left[{\rm e}^{ u_1 \log S_T + u_2 V_T + (f_1*\log S)_T+(f_2 * V)_T}\right] = \exp\left( \phi(T) +  \psi_1(T) \log S_0 + I^{1-\alpha} \psi_2(T) V_0    \right),
\]
where $\psi_1$ is given by~\eqref{E:RicHeston0}, and $\phi$ and $\psi_2$ solve the fractional Riccati equations
\begin{align*}
\phi' &=  \kappa \theta \psi_2, &&\phi(0)=0,\\
D^{\alpha}\psi_2 &= f_2+\frac12\left( \psi_1^2-\psi_1\right) + ( \rho \sigma \psi_1- \kappa )  \psi_2 + \frac{\sigma^2}{2} \psi^2_2,  &&I^{1-\alpha}\psi_2 (0)=u_2. 
\end{align*}
\end{example}

We now proceed with the lemmas used in the proof of Theorem~\ref{T:VolterraHeston}.

\begin{lemma}\label{T:Hestonmgle_new}
Let $g\in L^\infty(\R_+,\R)$ and define 
\[
U_t = \int_0^t g(s) \sqrt{V_s}(\sqrt{1-\rho^2}dW_{1,s} + \rho dW_{2,s})).
\] 
Then the stochastic exponential $\exp(U_t - \frac12\langle U\rangle_t)$ is a martingale. In particular, $S$ is a martingale.
\end{lemma}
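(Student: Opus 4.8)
The plan is to prove the general statement about $g$, since the martingale property of $S$ follows from it: by \eqref{E:HestonlogS} one has $S = S_0\,\Ecal(U)$ with the choice $g\equiv1$ (and $\|g\|_\infty=1$). Write $M = \Ecal(U) = \exp(U - \frac12\langle U\rangle)$, a nonnegative local martingale with $M_0=1$ and hence a supermartingale. It is a true martingale on $[0,T]$ precisely when $\E[M_T]=1$, so this is the identity I aim to establish for each fixed $T<\infty$.

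First I would localize along $\tau_n = \inf\{t\ge0\colon \int_0^t V_s\,ds\ge n\}$. Since $V$ is continuous and $\R_+$-valued, $\int_0^\cdot V_s\,ds$ is finite and continuous, so $\tau_n\uparrow\infty$ almost surely. On $[0,\tau_n\wedge T]$ one has $\langle U\rangle_{\tau_n\wedge T} = \int_0^{\tau_n\wedge T} g(s)^2 V_s\,ds\le \|g\|_\infty^2\,n$, which is bounded; hence Novikov's criterion shows that the stopped exponential $M^{\tau_n}$ is a true martingale on $[0,T]$, with $\E[M_{\tau_n\wedge T}]=1$, and one may define a probability measure $\P^n$ by $d\P^n = M_{\tau_n\wedge T}\,d\P$.

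The heart of the argument is to show $\P^n[\tau_n\le T]\to0$. Decomposing $1 = \E[M_{\tau_n\wedge T}] = \E[M_T\1{\tau_n>T}] + \P^n[\tau_n\le T]$ and using that $\int_0^{\tau_n}V_s\,ds = n$ on $\{\tau_n\le T\}$ gives
\[
n\,\P^n[\tau_n\le T] = \E^{\P^n}\big[\,n\1{\tau_n\le T}\,\big]\le \E^{\P^n}\Big[\int_0^T V_s\,ds\Big] = \int_0^T\E^{\P^n}[V_s]\,ds,
\]
so it suffices to bound $\sup_{s\le T}\E^{\P^n}[V_s]$ uniformly in $n$. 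By Girsanov, under $\P^n$ the process $\tilde W_2 = W_2 - \int_0^\cdot \rho\,g(s)\1{s\le\tau_n}\sqrt{V_s}\,ds$ is a Brownian motion on $[0,T]$, and substituting into \eqref{E:HestonV} shows that $V$ solves there a stochastic Volterra equation with the same kernel $K$ and diffusion coefficient $\sigma\sqrt{\fdot}$, but with shifted drift $\kappa\theta + (\sigma\rho\,g(s)\1{s\le\tau_n} - \kappa)V_s$. These coefficients satisfy the linear growth condition \eqref{LG} with a constant depending only on $\kappa,\theta,\sigma,\rho$ and $\|g\|_\infty$, crucially not on $n$. Lemma~\ref{L:moment bound} together with Remark~\ref{R:moment bound}, which covers predictable coefficients of uniform linear growth, then yields $\sup_{s\le T}\E^{\P^n}[V_s^2]\le C$ and hence $\sup_{s\le T}\E^{\P^n}[V_s]\le C^{1/2}$ with $C$ independent of $n$. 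Consequently $\P^n[\tau_n\le T]\le C^{1/2}T/n\to0$.

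To conclude I would pass to the limit: Fatou's lemma gives $\E[M_T]\le1$, so $M_T\in L^1$, and since $\1{\tau_n>T}\uparrow1$ almost surely, dominated convergence yields $\E[M_T\1{\tau_n>T}]\to\E[M_T]$; combined with $\P^n[\tau_n\le T]\to0$ this forces $\E[M_T]=1$, i.e. $M$ is a true martingale, and the specialization $g\equiv1$ gives that $S=S_0M$ is a martingale. The main obstacle I anticipate lies in the Girsanov step: one must verify that under $\P^n$ the process $V$ is genuinely a solution of a stochastic Volterra equation of the type covered by Lemma~\ref{L:moment bound}, and — decisively — that the linear growth constant of the shifted drift is independent of $n$, since the uniform-in-$n$ moment bound is exactly what makes $\P^n[\tau_n\le T]$ vanish. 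Boundedness of $g$ and the fact that the indicator $\1{s\le\tau_n}$ can only shrink the drift coefficient are what secure this uniformity.
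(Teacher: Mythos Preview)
Your proof is correct and follows the same overall strategy as the paper---localize, change measure via Girsanov, and exploit a moment bound that is uniform in $n$ thanks to the linear growth constant not depending on $n$---but differs in one genuinely useful way: the choice of localizing sequence. The paper stops when $V$ itself exceeds $n$, i.e.\ $\tau_n=\inf\{t\colon V_t>n\}\wedge T$, and then needs to control $\Q^n(\sup_{t\le T}V_t>n)$; this requires the H\"older bound of Lemma~\ref{L:Holder_bound} with an exponent $p>2/\gamma$ to pass from a pointwise moment bound to a supremum bound. You instead stop when $\int_0^\cdot V_s\,ds$ reaches $n$, which immediately gives $n\,\P^n[\tau_n\le T]\le\int_0^T\E^{\P^n}[V_s]\,ds$ by Tonelli, so only the first (or second) moment bound from Lemma~\ref{L:moment bound} is needed and Lemma~\ref{L:Holder_bound} is bypassed entirely. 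Your route is therefore a bit more elementary; the paper's route, on the other hand, would adapt more readily if one wanted a quantitative rate in terms of the path regularity of $V$. The final passage to the limit via the decomposition $1=\E[M_T\bm1_{\{\tau_n>T\}}]+\P^n[\tau_n\le T]$ and dominated convergence is cleanly justified, and your identification of the crucial point---that the shifted drift has linear growth constant independent of $n$---is exactly right.
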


\begin{proof}
Define $M_t=\exp(U_t - \frac12\langle U\rangle_t)$. Since $M$ is a nonnegative local martingale, it is a supermartingale by Fatou's lemma, and it suffices to show that $\E[M_T]\ge 1$ for any $T\in\R_+$. To this end, define stopping times $\tau_n=\inf\{t\ge0\colon V_t> n\}\wedge T$. Then $M^{\tau_n}$ is a uniformly integrable martingale for each $n$ by Novikov's condition, and we may define probability measures $\Q^n$ by
\[
\frac{d\Q^n}{d\P} = M_{\tau_n}.
\]
By Girsanov's theorem, the process $dW^n_t = dW_{2,t} - \bm1_{\{t\le \tau_n\}}\rho\,g(t)\sqrt{V_t}dt$ is Brownian motion under $\Q^n$, and we have
\[
V = V_0 + K*( (\kappa\theta-(\kappa-\rho\sigma g\bm1_{\lc0,\tau_n\rc})V)dt + \sigma\sqrt{V}dW^n).
\]
Let $\gamma$ be the constant appearing in \eqref{K_gamma} and choose $p>2$ sufficiently large that $\gamma/2-1/p>0$. Observe that the expression $\kappa\theta-(\kappa-\rho\sigma g(t)\bm1_{\{t\le\tau_n(\omega)\}})v$ satisfies a linear growth condition in $v$, uniformly in $(t,\omega)$. Therefore, due to Lemma~\ref{L:moment bound} and Remark~\ref{R:moment bound}, we have the moment bound
\[
\sup_{t\le T} \E_{\Q^n}[ |V_t|^p ] \le c
\]
for some constant $c$ that does not depend on $n$. For any real-valued function $f$, write
\[
|f|_{C^{0,\alpha}(0,T)} = \sup_{0\le s<t\le T} \frac{|f(t)-f(s)|}{|t-s|^\alpha}
\]
for its $\alpha$-H\"older seminorm. We then get
\begin{align*}
\Q^n(\tau_n< T) &\le \Q^n\Big( \sup_{t\le T} V_t> n\Big) \\
&\le \Q^n\Big( V_0 + |V|_{C^{0,0}(0,T)} > n\Big) \\
&\le \left(\frac{1}{n-V_0}\right)^p \E_{\Q^n}\left[ |V|_{C^{0,0}(0,T)}^p \right] \\
&\le \left(\frac{1}{n-V_0}\right)^p c'
\end{align*}
for a constant $c'$ that does not depend on $n$, using Lemma~\ref{L:Holder_bound} with $\alpha=0$ for the last inequality. We deduce that
\[
\E_\P\left[M_T\right] \ge \E_\P\left[M_T\bm 1_{\{\tau_n=T\}}\right] = \Q^n(\tau_n=T) \ge 1-\left(\frac{1}{n-V_0}\right)^p c',
\]
and sending $n$ to infinity yields $\E_\P[M_T]\ge 1$. This completes the proof.
\end{proof}

\begin{lemma} \label{L:HestonRic}
Assume $K$ is as in Theorem~\ref{T:VolterraHeston}. Let $u\in(\C^2)^*$ and $f\in L^1_{\rm loc}(\R_+,(\C^2)^*))$ be such that
\[
\text{${\rm Re\,} \psi_1 \in[0,1]$, ${\rm Re\,} u_2 \le0$ and ${\rm Re\,} f_2\le0$,}
\]
with $\psi_1$ given by \eqref{E:RicHeston0}. Then the Riccati--Volterra equation~\eqref{E:RicHeston1} has a unique global solution $\psi_2\in L^2_{\rm loc}(\R_+,\C^*)$, which satisfies ${\rm Re\,}\psi_2\le0$.
\end{lemma}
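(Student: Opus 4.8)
The plan is to view \eqref{E:RicHeston1} as a scalar complex Riccati--Volterra equation for the single unknown $\psi_2$, treating $\psi_1=u_1+1*f_1$ as given data, and to follow the structure of the proof of Lemma~\ref{L:sqrtRic}. First I would apply Theorem~\ref{T:local Volterra} to obtain a unique non-continuable solution $(\psi_2,T_{\rm max})$; the right-hand side is a quadratic polynomial in $\psi_2$ with continuous coefficients, so its hypotheses hold exactly as in the square-root case. Uniqueness in $L^2_{\rm loc}(\R_+,\C^*)$ is then automatic once global existence is shown, so the two substantive claims are ${\rm Re\,}\psi_2\le0$ and $T_{\rm max}=\infty$.

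For the sign I would write $\psi_j=\psi_j^{\bf r}+{\rm i}\psi_j^{\bf i}$ and set $\chi=-\psi_2^{\bf r}$, as in Lemma~\ref{L:sqrtRic}. Rewriting the quadratic part of \eqref{E:RicHeston1} as $\tfrac12\psi A^2\psi^\top$ with $\psi=(\psi_1,\psi_2)$, taking real parts, and absorbing the self-term $-\tfrac{\sigma^2}{2}(\psi_2^{\bf r})^2=\tfrac{\sigma^2}{2}\psi_2^{\bf r}\chi$ into the linear coefficient, I obtain the linear Volterra equation
\[
\chi=-({\rm Re\,}u_2)\,K+K*\big(g_0+b_\chi\,\chi\big),\quad b_\chi=\rho\sigma\psi_1^{\bf r}-\kappa+\tfrac{\sigma^2}{2}\psi_2^{\bf r},
\]
with forcing $g_0=-{\rm Re\,}f_2+\tfrac12\psi_1^{\bf r}(1-\psi_1^{\bf r})+\tfrac12\psi^{\bf i}A^2(\psi^{\bf i})^\top$. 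The crucial point is that $g_0\ge0$: the a priori indefinite cross term $\rho\sigma\psi_1^{\bf i}\psi_2^{\bf i}$ is swallowed, together with $\tfrac12(\psi_1^{\bf i})^2$ and $\tfrac{\sigma^2}{2}(\psi_2^{\bf i})^2$, into the quadratic form $\tfrac12\psi^{\bf i}A^2(\psi^{\bf i})^\top$, which is nonnegative because $A^2\succeq0$ --- and this is exactly the hypothesis $\rho\in[-1,1]$. The term $\tfrac12\psi_1^{\bf r}(1-\psi_1^{\bf r})$ is nonnegative because $\psi_1^{\bf r}\in[0,1]$, and ${\rm Re\,}u_2,{\rm Re\,}f_2\le0$ dispose of the rest. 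Since $K$ is nonnegative, Theorem~\ref{T:positive_2} then yields $\chi\ge0$, that is ${\rm Re\,}\psi_2\le0$.

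For global existence I would argue by contradiction. If $T_{\rm max}<\infty$ then $\psi_1$ is continuous, hence bounded on $[0,T_{\rm max}]$, and by Theorem~\ref{T:local Volterra} it suffices to bound $\psi_2$ a priori on $[0,T_{\rm max})$. As in Lemma~\ref{L:sqrtRic} I would seek to dominate $|\psi_2^{\bf i}|$ by some $g\ge0$ and $\chi=-\psi_2^{\bf r}\ge0$ from above by solutions of auxiliary linear Volterra equations, produced via Theorem~\ref{T:positive_2} and Corollary~\ref{C:linearVE}. The imaginary part obeys $\psi_2^{\bf i}=({\rm Im\,}u_2)K+K*\big(\tilde f^{\bf i}+\rho\sigma\psi_1^{\bf i}\psi_2^{\bf r}+(\rho\sigma\psi_1^{\bf r}-\kappa+\sigma^2\psi_2^{\bf r})\psi_2^{\bf i}\big)$, where $\tilde f^{\bf i}$ collects the known forcing, while the lower bound for $\psi_2^{\bf r}$ is driven by $\tfrac12\psi^{\bf i}A^2(\psi^{\bf i})^\top$, i.e.\ by $(\psi_2^{\bf i})^2$.

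The main obstacle is precisely this two-way coupling, which has no counterpart in Lemma~\ref{L:sqrtRic}: through the correlation the imaginary equation carries the forcing $\rho\sigma\psi_1^{\bf i}\psi_2^{\bf r}$ depending on $\psi_2^{\bf r}$, whereas the real-part bound depends quadratically on $\psi_2^{\bf i}$. Hence one cannot decouple the estimates into the chain $|\psi^{\bf i}_i|\le g_i\le h_i$ used there, and the two bounds must be closed simultaneously. My plan is to retain the dissipative quadratic terms $-\tfrac{\sigma^2}{2}\chi^2$ and $-\sigma^2\chi g$ that ${\rm Re\,}\psi_2\le0$ makes available, and to dominate the pair $(\chi,|\psi_2^{\bf i}|)$ by the solution of a coupled auxiliary Volterra system with nonnegative off-diagonal couplings (so that Theorem~\ref{T:positive_2} applies) in which this dissipation balances the growth forcings $|\rho\sigma\psi_1^{\bf i}|\,\chi$ and $\tfrac{\sigma^2}{2}g^2$; morally $g$ saturates at order $|\rho\psi_1^{\bf i}|/\sigma$, so the system stays bounded up to $T_{\rm max}$. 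This contradicts the blow-up alternative of Theorem~\ref{T:local Volterra} and forces $T_{\rm max}=\infty$, completing the proof.
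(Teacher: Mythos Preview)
Your treatment of local existence and of the sign ${\rm Re\,}\psi_2\le0$ is correct and matches the paper; in particular, packaging the imaginary cross terms as $\tfrac12\psi^{\bf i}A^2(\psi^{\bf i})^\top\ge0$ is exactly the observation the paper uses (there written out as $\tfrac12(\sigma\psi_2^{\bf i}+\rho\psi_1^{\bf i})^2+\tfrac12(1-\rho^2)(\psi_1^{\bf i})^2$).

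The gap is in the global-existence step. You have correctly isolated the obstruction --- the forcing $\rho\sigma\psi_1^{\bf i}\psi_2^{\bf r}$ in the imaginary-part equation couples back to the unknown $\psi_2^{\bf r}$ --- but the proposed fix (a coupled auxiliary system with ``nonnegative off-diagonal couplings'' in which quadratic dissipation balances the growth) is not a proof as stated: Theorem~\ref{T:positive_2} handles only \emph{linear} Volterra systems, so you cannot simultaneously keep the quadratic terms $-\tfrac{\sigma^2}{2}\chi^2$, $-\sigma^2\chi g$ and still invoke it, and Corollary~\ref{C:linearVE} gives no global solution for a genuinely quadratic auxiliary system. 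Your heuristic ``$g$ saturates at order $|\rho\psi_1^{\bf i}|/\sigma$'' is the right intuition but needs to be turned into an explicit algebraic manoeuvre.

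The paper does exactly that: instead of bounding $|\psi_2^{\bf i}|$ directly, it bounds the \emph{shifted} quantity $\psi_2^{\bf i}+\rho\sigma^{-1}\psi_1^{\bf i}$. In the equation for $\psi_2^{\bf i}$, the troublesome forcing $\rho\sigma\psi_1^{\bf i}\psi_2^{\bf r}$ then merges with $\sigma^2\psi_2^{\bf r}\psi_2^{\bf i}$ into $\sigma^2\psi_2^{\bf r}\,(\psi_2^{\bf i}+\rho\sigma^{-1}\psi_1^{\bf i})$, i.e.\ into the (dissipative, since $\psi_2^{\bf r}\le0$) diagonal coefficient of the shifted variable. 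After this completion of the square the chain argument of Lemma~\ref{L:sqrtRic} goes through verbatim: $g\pm(\psi_2^{\bf i}+\rho\sigma^{-1}\psi_1^{\bf i})$ satisfy linear equations with nonnegative forcing, hence $|\psi_2^{\bf i}+\rho\sigma^{-1}\psi_1^{\bf i}|\le g$; then $h-g$ satisfies a linear equation with nonnegative forcing (the freed-up term is $-\sigma^2\psi_2^{\bf r}g\ge0$), so $g\le h$ with $h$ globally defined and independent of $\psi_2^{\bf r}$; finally $\psi_2^{\bf r}-\ell$ is handled as before. This is precisely the missing concrete step behind your saturation heuristic.
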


\begin{proof}
The proof parallels that of Lemma~\ref{L:sqrtRic}. For any complex number $z$, we denote by $z^{\bf r}$ and $z^{\bf i}$ the real and imaginary parts of $z$. We rewrite equation~\eqref{E:RicHeston1} for $\psi_2$ as
\begin{equation}\label{E:RicHeston2}
\psi_2=u_2K+K\ast\left(f_2+\frac{1}{2}( \psi_1^2 -\psi_1)+ ( \rho \sigma \psi_1   -\kappa)\psi_2  + \frac{\sigma^2}{2} \psi^2_2\right).
\end{equation}
By Theorem~\ref{T:local Volterra} there exists a unique non-continuable solution $(\psi_2,T_{\rm max})$ of \eqref{E:RicHeston2}. The functions $\psi_2^{\bf r}$ and $\psi_2^{\bf i}$ satisfy the equations 
\begin{align*}
\psi_2^{\bf r} &= u_2^{\bf r}K + K\ast \bigg(f_2^{\bf r} +\frac{1}{2}( (\psi_1^{\bf r})^2 -\psi_1^{\bf r}- (\psi_1^{\bf i})^2)-\rho\sigma \psi_1^{\bf i}\psi_2^{\bf i} \\
&\qquad\qquad\qquad\qquad -\frac{\sigma^2}{2}(\psi_2^{\bf i})^2 +(\rho\sigma \psi_1^{\bf r}-\kappa)\psi_2^{\bf r}+\frac{\sigma^2}{2}(\psi_2^{\bf r})^2\bigg) \\
\psi_2^{\bf i} &= u_2^{\bf i} K+K\ast\left( f_2^{\bf i}+\frac{1}{2}\left(2 \psi_1^{\bf r} \psi_1^{\bf i}- \psi_1^{\bf i}\right)+\rho\sigma \psi_1^{\bf i}\psi_2^{\bf r}+(\rho\sigma \psi_1^{\bf r}-\kappa+\sigma^2\psi_2^{\bf r})\psi_2^{\bf i}\right)
\end{align*}
on $[0,T_{\rm max})$. After some rewriting, we find that on $[0,T_{\rm max})$, $-\psi^{\bf r}_2$ satisfies the linear equation
\begin{align*}
\chi = -u_2^{\bf r} K + K * \bigg( &-f_2^{\bf r}+\frac{1}{2}( \psi_1^{\bf r}-(\psi_1^{\bf r})^2 +(1-\rho^2)(\psi_1^{\bf i})^2) \\
&\quad +\frac{(\sigma\psi_2^{\bf i}+\rho \psi_1^{\bf i})^2}{2}-\left(\rho\sigma \psi_1^{\bf r}-\kappa+\frac{\sigma^2}{2}\psi_2^{\bf r}\right)\chi\bigg).
\end{align*}
Due to \eqref{sqrt2} and since $\psi_1^{\bf r},|\rho|\in[0,1]$, and $f^{\bf r}_2$ and $u_2^{\bf r}$ are nonpositive, Theorem~\ref{T:positive_2} yields $\psi^{\bf r}_2\le0$ on $[0,T_{\rm max})$. 

Now, if $\sigma=0$, then \eqref{E:RicHeston2} is a linear Volterra equation and thus admits a unique global solution $\psi_2\in L^2_{\rm loc}(\R_+,\C^*)$ by Corollary~\ref{C:linearVE}. Therefore it suffices to consider the case $\sigma>0$. 

Following the proof of Lemma~\ref{L:sqrtRic}, we let $g\in L^2_{\rm loc}([0,T_{\rm max}),(\R)^*)$ and $h,\ell \in L^2_{\rm loc}(\R_+,(\R)^*)$ be the unique solutions of the linear equations 
\begin{align*}
g&= |u_2^{\bf i}| K+\left|\rho\sigma^{-1}u_1^{\bf i}\right|+K\ast\bigg( \left|\rho\sigma^{-1}(L\ast f_1^{\bf i})+f_2^{\bf i}+\frac{\psi_1^{\bf i}}{2}\left(2(1-\rho^2)\psi_1^{\bf r}-1+\frac{2\kappa\rho}{\sigma}\right)\right| \\
&\quad\qquad\qquad\qquad\qquad\qquad\qquad +(\rho\sigma \psi_1^{\bf r}-\kappa+\sigma^2\psi_2^{\bf r})g\bigg) \\
h &=  |u_2^{\bf i}| K+\left|\rho\sigma^{-1}u_1^{\bf i}\right|+K\ast\bigg( \left|\rho\sigma^{-1}(L\ast f_1^{\bf i})+ f_2^{\bf i}+\frac{\psi_1^{\bf i}}{2}\left(2(1-\rho^2)\psi_1^{\bf r}-1+\frac{2\kappa\rho}{\sigma}\right)\right|\\
&\quad\quad\quad\quad\quad\quad\quad\quad\quad\quad\quad\quad\quad+(\rho\sigma \psi_1^{\bf r}-\kappa)h\bigg)\\
\ell &=u_2^{\bf r}K + K\ast \bigg(f_2^{\bf r}+\frac{1}{2}((\psi_1^{\bf r})^2 - \psi_1^{\bf r}-(\psi_1^{\bf i})^2)-|\rho\sigma \psi_1^{\bf i}| \left(h+\left|\rho \psi_1^{\bf i}\sigma^{-1}\right|\right) \\
&\qquad\qquad\qquad\qquad -\frac{\sigma^2}{2}\left(h+\left|\rho \psi_1^{\bf i}\sigma^{-1}\right|\right)^2+(\rho\sigma \psi_1^{\bf r}-\kappa)\ell\bigg).
\end{align*}
These solutions exist on $[0,T_{\rm max})$ thanks to Corollary~\ref{C:linearVE}. We now perform multiple applications of Theorem~\ref{T:positive_2}. The functions $g\pm  (\psi_2^{\bf i}+(\rho \psi_1^{\bf i}\sigma^{-1}))$ satisfy the equations
\begin{align*}
\chi= 2(u_2^{\bf i})^{\pm} K+2\left(\rho\sigma^{-1}u_1^{\bf i}\right)^{\pm}+K\ast\bigg(& 2\left(\rho\sigma^{-1}(L\ast f_1^{\bf i})+f_2^{\bf i}+\frac{\psi_1^{\bf i}}{2}\left(2(1-\rho^2)\psi_1^{\bf r}-1+\frac{2\kappa\rho}{\sigma}\right)\right)^{\pm} \\
&\quad +(\rho\sigma \psi_1^{\bf r}-\kappa+\sigma^2\psi_2^{\bf r})\chi\bigg)
\end{align*}
on $[0,T_{max})$, so that $0\leq |\psi_2^{\bf i}+\left(\rho \psi_1^{\bf i}\sigma^{-1}\right)|\leq g$ on $[0,T_{max})$. Similarly, the function $h-g$ satisfies the equation
\[
\chi=K*\left(-\sigma^2\psi_2^{\bf r}g+(\rho\sigma \psi_1^{\bf r}-\kappa)\chi\right)
\]
on $[0,T_{\rm max})$, so that $g\le h$ on  $[0,T_{\rm max})$. This yields $|\psi_2^{\bf i}|\le h+\left|\rho \psi_1^{\bf i}\sigma^{-1}\right|$ on  $[0,T_{max})$. Finally, the function $\psi_2^{\bf r}-\ell$ satisfies the linear equation 
\begin{align*}
\chi= K\ast \bigg(&|\rho\sigma \psi_1^{\bf i}| \left(h+\left|\rho \psi_1^{\bf i}\sigma^{-1}\right|\right)-\rho\sigma \psi_1^{\bf i}\psi_2^{\bf i} \\
&+\frac{\sigma^2}{2}\left(\left(h+\left|\rho \psi_1^{\bf i}\sigma^{-1}\right|\right)^2-\left(\psi_2^{\bf i}\right)^2+\left(\psi_2^{\bf r}\right)^2\right)+(\rho\sigma \psi_1^{\bf r}-\kappa)\chi\bigg)
\end{align*}
on $[0,T_{\rm max})$, so that $\ell\le \psi_2^{\bf r}\leq 0$ on  $[0,T_{\rm max})$.  Since $h$ and $\ell$ are global solutions and thus have finite norm on any bounded interval, this implies that $T_{\rm max}=\infty$ and completes the proof of the lemma.\end{proof}

We conclude this section with a remark on an alternative variant of the Volterra Heston model in the spirit of \cite{com_cou_ren_12,GJR:17}.

\begin{example}
Let $\widetilde K$ denotes a scalar locally square integrable non-negative kernel. Consider the following variant of the Volterra Heston model 
\begin{align*}
dS_t &= S_t \sqrt{\widetilde V_t}dB_t, && S_0\in(0,\infty), \\
dV_t &= \kappa (\theta -V_t)dt + \sigma \sqrt{V_t} dB^{\perp}_t, &&V_0\geq 0,\\
\widetilde V_t&= \widetilde V_0 + (\widetilde K*V)_t,
\end{align*}
where $B$ and $B^{\perp}$ are independent Brownian motions. Since  $\widetilde K$ is nonnegative, one readily sees that there exists a unique strong solution taking values in $\R \times \R^2_+$. The $3$-dimensional process $X=(\log S, V,\widetilde V)$ is an affine Volterra process with  
$$ K = \diag (1,1, \widetilde K), \quad  b^0 =\begin{pmatrix}0 \\\kappa \theta\\ 0 \end{pmatrix}, \quad B = \begin{pmatrix}
0 & 0 & - \frac 12  \\
0 &- \kappa  & 0 \\
0 &  1 & 0
\end{pmatrix}, $$ 
$$ A^0 = 0, \quad  A^1 =0, \quad A^2= \diag(0,\sigma^2,0), \quad  A^3= \diag(1,0,0).$$ 
The Riccati--Volterra equation \eqref{RicVol} reads 
\begin{align*}
\psi'_1 &= f_1 , &&\psi_1(0)=u_1, \nonumber \\
\psi_2'&= f_2 +  \psi_3 - \kappa \psi_2  + \frac{\sigma^2}2 \psi_2^2, && \psi_2 (0)=u_2, \\
\psi_3 &= u_3 \widetilde K + \widetilde K*\left(f_3 + \frac 12 \psi_1 (\psi_1 -1)\right). 
\end{align*}
Under suitable conditions the solution exists and is unique, and the process ${\rm e}^{Y}$ with $Y$ given by \eqref{eq:dY}--\eqref{Y} is a true martingale. Hence by Theorem \ref{T:cf} the exponential-affine transform formula \eqref{eq:expaff} holds. We omit the details. In particular, for $f\equiv 0$ we get, using Example~\ref{E:faffine},
\begin{align*}
\chi(t) &=  (\psi * L )(t) = \left(u_1, \psi_2(t), u_3 +   \frac {(u_1^2-u_1)t}2  \right)
\end{align*}
and 
$$\E \left[ {\rm e}^{ u_1 \log S_T + u_2 V_T + u_3 \widetilde V_T} \right] = \exp\left ( \phi(T) +u_1 \log S_0 + \psi_2(T) V_0 +    \left(u_3 +   \frac {(u^2_1-u_1)T}2\right) \widetilde V_0 \right),$$
where $\phi$  and $\psi_2$ solve 
\begin{align*}
\phi' &=  \kappa \theta \psi_2, &&\phi(0)=0,\\
\psi_2' &=  u_3 \widetilde K + \widetilde K*\frac {(u_1^2-u_1)}2 - \kappa \psi_2  + \frac{\sigma^2}2 \psi_2^2,  &&\psi_2 (0)=u_2. 
\end{align*} 
Setting $\widetilde K =  \frac{t^{\alpha-1}}{\Gamma(\alpha)}$  and $u_2=0$, this formula agrees with \citet[Theorem 2.1]{GJR:17}. If $B$ and $B^{\perp}$ are correlated one loses the affine property, as highlighted in \citet[Remark 2.2]{GJR:17}. 
\end{example}

\section*{Acknowledgements}
The authors wish to thank Bruno Bouchard, Omar El Euch, Camille Illand, and Mathieu Rosenbaum for useful comments and fruitful discussions. The authors also thank the anonymous referees for their careful reading of the manuscript and suggestions.

\appendix

\section{Proof of Theorems~\ref{T:sol_lip} and~\ref{T:EXISTENCE}}  \label{S:proof_existence}

\begin{proof}[Proof of Theorem~\ref{T:sol_lip}]
The proof parallels that of \citet[Theorem 2.3]{MARINELLI2010616}, using a contraction mapping principle. Suppose that $p>\max\{2,2/\gamma\}$, with $\gamma$ as in~\eqref{K_gamma}. For $T\ge0$, consider all processes $X$ on $[0,T]$ that satisfy
\[
 \|X\|_{p,T} = \sup_{t \leq T} \E[|X_t|^p]^{1/p} < \infty,
\]
and are continuous in $L^p$ in the sense that $\lim_{s\to t}\E[|X_t-X_s|^p]=0$ for all $t\in[0,T]$. We let $\Hcal_{p,T}$ denote the space of all such $X$, modulo the equivalence relation obtained by identifying processes that are versions of each other. One readily verifies that $(\Hcal_{p,T},\|\fdot\|_{p,T})$ is a Banach space. Thanks to \citet[Proposition 3.21]{PZ07}, every element $X\in\Hcal_{p,T}$ admits a predictable representative, again denoted by $X$. Below we always work with such representatives.

We first prove the existence of a unique solution to~\eqref{SVE} in $\mathcal H_{p,T}$. To this end we consider the following family of norms on $\mathcal H_{p,T}$:
\[
 \|X\|_{p,T,\lambda} :=  \sup_{t \leq T} \E[|e^{-\lambda t} X_t|^p]^{1/p}, \quad \lambda >0.
\]
For every $X\in\Hcal_{p,T}$, define a new process $\Tcal X$ by
\begin{equation*}\label{eq:mapT}
\mathcal T X =  X_0+K*(b(X)dt+\sigma(X)dW).
\end{equation*} 
Lemma~\ref{L:Holder_bound} and the linear growth properties of $b$ and $\sigma$ imply that $\|\mathcal TX\|_{p,T}<\infty$ and $\Tcal X$ is continuous in $L^p$. Thus $\mathcal T X$ lies in $\mathcal H_{p,T}$. Now, since the norms $\|\cdot\|_{p,T}$ and $\|\cdot\|_{p,T,\lambda}$ are equivalent, it is enough to find $\lambda>0$ such that the $\mathcal T$ defines a contraction on $(\mathcal H_{p,T},\|\cdot\|_{p,T,\lambda})$. That is, we look for $\lambda>0$ and $M<1$ such that
\begin{equation}\label{eq:contract temp}
	\| \mathcal T X - \mathcal T Y \|_{p,T,\lambda} \leq M  \| X -  Y \|_{p,T,\lambda}, \quad  X,Y \in \mathcal H_{p,T}.	
\end{equation} 
For $t \leq T$ and $\lambda >0$ we have
\begin{align*}
	 |e^{-\lambda t}((\mathcal T X)_t - (\mathcal T Y)_t)|^p  &\leq  2^{p-1} \left|\int_0^t e^{-\lambda (t-s)}K(t-s)e^{-\lambda s}(b(X_s)-b(Y_s))ds\right|^p\\
	 &\quad  + 2^{p-1} \left|\int_0^t e^{-\lambda (t-s)} K(t-s)e^{-\lambda s}((\sigma(X_s)-\sigma(Y_s))dW_s\right|^p.
	 \end{align*} 
 Arguing as in the proof of Lemma~\ref{L:Holder_bound}, an application of the Jensen and BDG inequalities combined with the Lipschitz property of $b$ and $\sigma$ yields
\begin{equation*}
	 \| \mathcal T X - \mathcal T Y \|_{p,T,\lambda}^p  \leq c \left(\int_0^T e^{-2\lambda u} |K(u)|^2 du \right)^{p/2} \|X-Y\|^p_{p,T,\lambda}
\end{equation*} 
for some constant $c$ depending only on $p$, $T$, and the Lispchitz constant for $b$ and $\sigma$. Since $\int_0^T e^{-2\lambda u} |K(u)|^2 du\to0$ as $\lambda \to \infty$ by the dominated convergence theorem, there exists $\lambda >0$ such that \eqref{eq:contract temp} holds.

%The existence of $\lambda >0$ such that \eqref{eq:contract temp} holds now follows since $\int_0^T e^{-2\lambda u} |K(u)|^2 du\to0$ as $\lambda \to \infty$ due to the dominated convergence theorem.

Let $X$ be the unique fixed point in $\mathcal H_{p,T}$ of the map $\mathcal T$. Lemma~\ref{L:Holder_bound} implies that $X$ has a continuous version. This version is a strong solution of~\eqref{SVE} on $[0,T]$. By virtue of Lemmas~\ref{L:Holder_bound} and~\ref{L:moment bound}, all continuous solutions of~\eqref{SVE} on $[0,T]$ are fixed points in $\mathcal H_{p,T}$ of the map $\mathcal T$, which implies uniqueness. Since $T\ge0$ was arbitrary, it follows that \eqref{SVE} has a unique strong solution on $[0,\infty)$.
\end{proof}

\begin{lemma} \label{L:tight}
Fix an initial condition $X_0\in\R^d$ and a constant $c_{\rm LG}$. Let $\Xcal$ denote the set of all continuous processes $X$ that solve \eqref{SVE} for some continuous coefficients $b$ and $\sigma$ satisfying the linear growth bound~\eqref{LG} with the given constant $c_{\rm LG}$. Then $\Xcal$ is tight, meaning that the family $\{\text{law of $X$} \colon X\in\Xcal\}$ of laws on $C(\R_+;\R^d)$ is tight.
\end{lemma}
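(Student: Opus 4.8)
The plan is to verify the standard Kolmogorov-type tightness criterion on each $C([0,T];\R^d)$ and then pass to $C(\R_+;\R^d)$, drawing all quantitative input from Lemmas~\ref{L:moment bound} and~\ref{L:Holder_bound}. Since the topology on $C(\R_+;\R^d)$ is that of locally uniform convergence, a family of laws is tight there precisely when the family of its restrictions to $C([0,T];\R^d)$ is tight for every $T<\infty$. I would therefore fix $T$, establish tightness on $[0,T]$, and recover tightness on $C(\R_+;\R^d)$ at the end by a routine diagonal/intersection argument over $T=1,2,\ldots$. Throughout I take the standing assumption that the components of $K$ satisfy \eqref{K_gamma}, with associated exponent $\gamma$.

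The essential point is that every constant will be uniform over $\Xcal$. First I would fix $p\ge2$ and invoke Lemma~\ref{L:moment bound}: for any $X\in\Xcal$ one has $\sup_{t\le T}\E[|X_t|^p]\le c$ with a constant $c$ depending only on $|X_0|$, $K|_{[0,T]}$, $c_{\rm LG}$, $p$, and $T$, all of which are fixed across the family, so $c$ does not depend on the particular coefficients $b,\sigma$ or on the solution $X$. Next, using the linear growth bound \eqref{LG} together with the operator-norm identity $|\sigma\sigma^\top|=|\sigma|^2$, the drift process $b(X_\cdot)$ and the diffusion density $a_\cdot=\sigma(X_\cdot)\sigma(X_\cdot)^\top$ satisfy $|b(X_t)|^p\le c_{\rm LG}^p(1+|X_t|)^p$ and $|a_t|^{p/2}=|\sigma(X_t)|^p\le c_{\rm LG}^p(1+|X_t|)^p$, whence $\sup_{t\le T}\E[|a_t|^{p/2}+|b(X_t)|^p]$ is bounded by a constant that is again uniform over $\Xcal$.

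With these moment bounds in hand I would apply Lemma~\ref{L:Holder_bound} to the process $X-X_0=K*(b(X)dt+\sigma(X)dW)$, whose increments coincide with those of $X$. Choosing $p>2/\gamma$ and $\alpha\in(0,\gamma/2-1/p)$, the estimate \eqref{eq:L:Holder_bound} produces a version of $X$ with $\E[\,|X|_{C^{0,\alpha}}^p\,]\le c'''$, where $|X|_{C^{0,\alpha}}=\sup_{0\le s<t\le T}|X_t-X_s|/|t-s|^\alpha$ and $c'''$ is uniform over $\Xcal$. Since every $X\in\Xcal$ starts at the same deterministic point $X_0$, the sets $\Kcal_M=\{f\in C([0,T];\R^d):f(0)=X_0,\ |f|_{C^{0,\alpha}}\le M\}$ are compact by Arzel\`a--Ascoli, and Markov's inequality gives $\P(X|_{[0,T]}\notin\Kcal_M)\le c'''/M^p$ uniformly over $\Xcal$; taking $M$ large makes this smaller than any prescribed $\varepsilon$. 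This yields tightness on $C([0,T];\R^d)$ for every $T$, and hence on $C(\R_+;\R^d)$.

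The argument is essentially mechanical once Lemmas~\ref{L:moment bound} and~\ref{L:Holder_bound} are available; the only point demanding care is the uniformity of all constants across the entire family $\Xcal$. This hinges precisely on the fact that the constants in those two lemmas depend only on the fixed data $(X_0,K,c_{\rm LG},p,T)$ and not on the individual coefficients or on the particular solution, so I would track that dependence explicitly at each step.
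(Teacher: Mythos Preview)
Your proposal is correct and follows essentially the same route as the paper: invoke Lemma~\ref{L:moment bound} to get uniform moment bounds, feed these into Lemma~\ref{L:Holder_bound} to obtain a uniform bound on the $p$th moment of the H\"older seminorm, and conclude tightness from compactness of H\"older balls. The paper is slightly terser---it passes directly to $C(\R_+;\R^d)$ by saying ``closed H\"older balls are compact''---whereas you spell out the reduction to each $[0,T]$ and the Arzel\`a--Ascoli/Markov step, but the substance is identical.
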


\begin{proof}
Let $X\in\Xcal$ be any solution of \eqref{SVE} for some continuous $b$ and $\sigma$ satisfying the linear growth bound~\eqref{LG}. Lemma~\ref{L:moment bound} implies that  $\sup_{u\le T} \E[  |b(X_u)|^p ]$ and $\sup_{u\le T} \E[  |\sigma(X_u)|^p ]$ are bounded above by a constant that only depends on $|X_0|$, $\| K\|_{L^2(0,T)}$, $c_{\rm LG}$, $p$, and $T$. Therefore, since the components of $K$ satisfy~\eqref{K_gamma}, we may apply Lemma~\ref{L:Holder_bound} to obtain
\[
\E\left[ \left( \sup_{0\le s<t\le T} \frac{|X_t-X_s|}{|t-s|^\alpha} \right)^p \right] \le c
\]
for all $\alpha\in[0,\bar\gamma/2-1/p)$, where $\bar\gamma$ is the smallest of the constants $\gamma$ appearing in~\eqref{K_gamma} for the components of $K$, and where $c$ is a constant that only depends on $|X_0|$, $\| K\|_{L^2(0,T)}$, $c_{\rm LG}$, $p$, and $T$, but not on $s$ or $t$, nor on the specific choice of $X\in\Xcal$. Choosing $p>2$ so that $\bar\gamma p/2>1$, and using that closed H\"older balls are compact in $C([0,T];\R^d)$ by the Arzel\`a--Ascoli theorem, it follows that $\Xcal$ is tight in $C(\R_+;\R^d)$.
\end{proof}

\begin{lemma} \label{L:convergence}
Assume that $K$ admits a resolvent of the first kind~$L$. For each $n\in\N$, let $X^n$ be a weak solution of \eqref{SVE} with $b$ and $\sigma$ replaced by some continuous coefficients $b^n$ and $\sigma^n$ that satisfy \eqref{LG} with a common constant $c_{\rm LG}$. Assume that $b^n\to b$ and $\sigma^n\to\sigma$ locally uniformly for some coefficients $b$ and $\sigma$, and that $X^n\Rightarrow X$ for some continuous process $X$. Then $X$ is a weak solution of \eqref{SVE}.
\end{lemma}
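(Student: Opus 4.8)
The plan is to reduce everything to the driving semimartingale and exploit the resolvent of the first kind. For each $n$, set $Z^n = \int_0^\cdot b^n(X^n_s)\,ds + \int_0^\cdot \sigma^n(X^n_s)\,dW^n_s$, so that $X^n = X_0 + K*dZ^n$. By Lemma~\ref{L:ZX} this is equivalent to $Z^n = L*(X^n - X_0)$, which is the crucial observation: the driving semimartingale is a \emph{deterministic path functional} of $X^n$. Since $L$ has locally bounded variation, the map $\xi\mapsto L*(\xi - X_0)$ is continuous from $C(\R_+;\R^d)$ to itself, because $|(L*(\xi-\eta))(t)|\le \|\xi-\eta\|_{\infty,[0,t]}\,|L|([0,t])$; hence from $X^n\Rightarrow X$ we obtain $(X^n,Z^n)\Rightarrow (X,Z)$ with $Z := L*(X-X_0)$. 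If we can show that $Z = \int_0^\cdot b(X_s)\,ds + \int_0^\cdot \sigma(X_s)\,dW_s$ for some Brownian motion $W$, then Lemma~\ref{L:ZX} applied in the reverse direction yields $X - X_0 = K*dZ$, that is, $X$ is a weak solution of \eqref{SVE}.

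To identify the decomposition of $Z$ I would set up the associated martingale problem. Write $N^n := Z^n - \int_0^\cdot b^n(X^n_s)\,ds = \int_0^\cdot \sigma^n(X^n_s)\,dW^n_s$ and $C^n := N^n(N^n)^\top - \int_0^\cdot a^n(X^n_s)\,ds$ with $a^n=\sigma^n(\sigma^n)^\top$; both $N^n$ and $C^n$ are martingales, and both are again path functionals of $X^n$. By Skorokhod's representation theorem I may realize all the $X^n$ together with $X$ on a common probability space with $X^n\to X$ a.s.\ uniformly on compacts; the martingale identities, being statements about the law of $X^n$, are preserved. On a set of full measure the paths $\{X^n\}$ lie in a common compact subset of $\R^d$ on each $[0,T]$, so local uniform convergence $b^n\to b$, $\sigma^n\to\sigma$ combined with continuity of $b,\sigma$ gives $b^n(X^n_s)\to b(X_s)$ and $\sigma^n(X^n_s)\to\sigma(X_s)$ pointwise; together with continuity of $\xi\mapsto L*(\xi-X_0)$ and dominated convergence this yields $N^n\to N := Z - \int_0^\cdot b(X_s)\,ds$ and $C^n\to C := NN^\top - \int_0^\cdot a(X_s)\,ds$, a.s.\ uniformly on compacts.

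The key analytic input is the uniform moment bound of Lemma~\ref{L:moment bound}: since $b^n,\sigma^n$ satisfy \eqref{LG} with the common constant $c_{\rm LG}$, we have $\sup_n\sup_{t\le T}\E[|X^n_t|^p]<\infty$ for every $p\ge2$, hence $\sup_n\sup_{t\le T}\E[|N^n_t|^p]<\infty$ for every $p$. This furnishes the uniform integrability needed to pass to the limit in the tested identities $\E[(N^n_t-N^n_s)\,H^n]=0$ and $\E[(C^n_t-C^n_s)\,H^n]=0$, where $H^n = g(X^n_{r_1},\dots,X^n_{r_k})$ with $r_i\le s$ and $g$ bounded continuous. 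In the limit these show that $N$ is a continuous martingale with respect to the filtration generated by $X$ and that $\langle N\rangle_t = \int_0^t a(X_s)\,ds$ with $a=\sigma\sigma^\top$.

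Finally, since $\langle N\rangle = \int_0^\cdot \sigma(X_s)\sigma(X_s)^\top\,ds$, a standard representation theorem for continuous local martingales (on a possibly enlarged probability space) furnishes an $m$-dimensional Brownian motion $W$ with $N = \int_0^\cdot \sigma(X_s)\,dW_s$, so that $Z = \int_0^\cdot b(X_s)\,ds + \int_0^\cdot \sigma(X_s)\,dW_s$. By Fatou's lemma $X$ inherits the bound $\sup_{t\le T}\E[|X_t|^p]<\infty$, so $b(X)$ and $\sigma(X)$ are continuous and, via Lemma~\ref{L:Holder_bound}, $K*dZ$ has a continuous version; thus the hypotheses of Lemma~\ref{L:ZX} are met, and its reverse implication (recall $Z=L*(X-X_0)$ by construction) gives $X-X_0 = K*dZ$, completing the proof. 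The main obstacle is conceptual rather than computational: recognizing, through $Z^n=L*(X^n-X_0)$, that the driving martingale is a path functional of $X^n$ is exactly what allows weak convergence of $X^n$ alone---with no control whatsoever on the sequence $W^n$---to be upgraded to the full martingale-problem characterization of the limit. The remaining routine-but-delicate points are the uniform integrability supplied by Lemma~\ref{L:moment bound} and the double approximation $b^n(X^n)\to b(X)$, $\sigma^n(X^n)\to\sigma(X)$.
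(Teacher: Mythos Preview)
Your proof is correct and follows essentially the same strategy as the paper's: express $Z^n = L*(X^n-X_0)$ as a continuous path functional of $X^n$, use the uniform moment bound from Lemma~\ref{L:moment bound} to pass the martingale-problem identities to the limit, represent the limiting martingale via a Brownian motion, and invoke the reverse direction of Lemma~\ref{L:ZX}. The only cosmetic difference is that you pass through Skorokhod's representation theorem to obtain almost sure convergence, whereas the paper works directly with weak convergence and the continuous mapping theorem, citing \citet[Theorem~3.5]{B:99} for the uniform integrability step.
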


\begin{proof}
Lemma~\ref{L:ZX} yields the identity
\[
L*(X^n-X_0) = \int b^n(X^n)dt + \int \sigma^n(X^n) dW.
\]
Moreover, \citet[Theorem~3.6.1(ii) and Corollary~3.6.2(iii)]{GLS:90} imply that the map
\[
F \mapsto L*(F-F(0))
\]
is continuous from $C(\R_+;\R^d)$ to itself. Using also the locally uniform convergence of $b^n$ and $\sigma^n$, the continuous mapping theorem shows that the martingales
\[
M^n = \int \sigma^n(X^n) dW = L*(X^n-X_0) - \int b^n(X^n)dt
\]
converge weakly to some limit $M$, that the quadratic variations $\langle M^n\rangle = \int \sigma^n\sigma^{n\top}(X^n) dt$ converge weakly to $\int \sigma\sigma^\top(X)dt$, and that $\int b^n(X^n)dt$ converge weakly to $\int b(X)dt$.

Consider any $s<t$, $m\in\N$, any bounded continuous function $f\colon\R^m\to\R$, and any $0\le t_1\le\cdots\le t_m\le s$. Observe that the moment bound in Lemma~\ref{L:moment bound} is uniform in $n$ since the $X^n$ satisfy the linear growth condition \eqref{LG} with a common constant. Using \citet[Theorem~3.5]{B:99}, one then readily shows that
\[
\E[ f(X_{t_1},\ldots,X_{t_m}) (M_t-M_s) ] = \lim_{n\to\infty} \E[ f(X^n_{t_1},\ldots,X^n_{t_m}) (M^n_t-M^n_s) ] = 0,
\]
and similarly for the increments of $M_i^n M_j^n-\langle M_i^n,M_j^n\rangle$. It follows that $M$ is a martingale with respect to the filtration generated by $X$ with quadratic variation $\langle M\rangle=\int \sigma\sigma^\top(X)dt$. This carries over to the usual augmentation. Enlarging the probability space if necessary, we may now construct a $d$-dimensional Brownian motion $\overline W$ such that $M=\int\sigma(X)d\overline W$.

The above shows that $L*(X-X_0) = \int b(X)dt + \int\sigma(X)d\overline W$. The converse direction of Lemma~\ref{L:ZX} then yields $X=X_0+K*(b(X)dt+\sigma(X)d\overline W)$, that is, $X$ solves \eqref{SVE} with the Brownian motion $\overline W$.
\end{proof}

\begin{proof}[Proof of Theorem~\ref{T:EXISTENCE}]
Using \citet[Proposition~1.1]{HS:12} we choose Lipschitz coefficients $b^n$ and $\sigma^n$ that satisfy the linear growth bound~\eqref{LG} with $c_{\rm LG}$ replaced by $2c_{\rm LG}$, and converge locally uniformly to $b$ and $\sigma$ as $n\to\infty$. Let $X^n$ be the unique continuous strong solution of \eqref{SVE} with $b$ and $\sigma$ replaced by $b^n$ and $\sigma^n$; see Theorem~\ref{T:sol_lip}. Due to Lemma~\ref{L:tight} the sequence $\{X^n\}$ is tight, so after passing to a subsequence we have $X^n\Rightarrow X$ for some continuous process $X$. The result now follows from Lemma~\ref{L:convergence}.
\end{proof}

\section{Local solutions of Volterra integral equations}

Fix a kernel $K\in L^2_{\rm loc}(\R_+,\R^{d\times d})$ along with functions $g\colon\R_+\to\C^d$ and $p\colon \R_+\times\C^d\to\C^d$, and consider the Volterra integral equation
\begin{equation} \label{eq:VolIntEq}
\psi = g + K*p(\fdot,\psi).
\end{equation}
A {\em non-continuable solution} of \eqref{eq:VolIntEq} is a pair $(\psi,T_{\rm max})$ with $T_{\rm max}\in(0,\infty]$ and $\psi\in L^2_{\rm loc}([0,T_{\rm max}),\C^d)$, such that $\psi$ satisfies \eqref{eq:VolIntEq} on $[0,T_{\rm max})$ and $\|\psi\|_{L^2(0,T_{\rm max})}=\infty$ if $T_{\rm max}<\infty$. If $T_{\rm max}=\infty$ we call $\psi$ a {\em global solution} of \eqref{eq:VolIntEq}. With some abuse of terminology we call a non-continuable solution $(\psi,T_{\rm max})$ {\em unique} if for any $T\in\R_+$ and $\widetilde\psi\in L^2([0,T],\C^d)$ satisfying \eqref{eq:VolIntEq} on $[0,T]$, we have $T<T_{\rm max}$ and $\widetilde\psi=\psi$ on $[0,T]$.

\begin{theorem} \label{T:local Volterra}
Assume that $g\in L^2_{\rm loc}(\R_+,\C^d)$, $p(\fdot,0)\in L^1_{\rm loc}(\R_+,\C^d)$, and that for all $T\in\R_+$ there exist a positive constant $\Theta_T$ and a function $\Pi_T\in L^2([0,T],\R_+)$ such that
\begin{equation}\label{eq:local Volterra:1}
|p(t,x)-p(t,y)| \le \Pi_T(t)|x-y|+\Theta_T |x-y|(|x|+|y|), \qquad x,y\in\C^d,\ t\le T.
\end{equation}
The Volterra integral equation \eqref{eq:VolIntEq} has a unique non-continuable solution $(\psi,T_{\rm max})$. If $g$ and $p$ are real-valued, then so is $\psi$.
\end{theorem}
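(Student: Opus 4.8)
The plan is to establish the theorem in three stages: local existence and uniqueness by a fixed-point argument on a short interval, extension to a maximal interval by a continuation (restart) argument, and finally reading off real-valuedness from uniqueness. For the local step I would fix $T<\infty$ and seek a solution in $L^2([0,\delta],\C^d)$ for small $\delta$ as a fixed point of $\Phi(\psi)=g+K*p(\fdot,\psi)$. The key observation is that for $\psi\in L^2$ the pointwise bound $|p(t,\psi(t))|\le|p(t,0)|+\Pi_T(t)|\psi(t)|+\Theta_T|\psi(t)|^2$ implied by \eqref{eq:local Volterra:1}, combined with $p(\fdot,0)\in L^1_{\rm loc}$ and Cauchy--Schwarz, shows $p(\fdot,\psi)\in L^1([0,\delta])$; Young's inequality $\|K*h\|_{L^2}\le\|K\|_{L^2}\|h\|_{L^1}$ then returns $\Phi(\psi)$ to $L^2([0,\delta])$. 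On a ball $B_M=\{\|\psi\|_{L^2(0,\delta)}\le M\}$ the same two inequalities yield
\[
\|\Phi(\psi_1)-\Phi(\psi_2)\|_{L^2(0,\delta)}\le\|K\|_{L^2(0,\delta)}\bigl(\|\Pi_T\|_{L^2(0,\delta)}+2\Theta_T M\bigr)\|\psi_1-\psi_2\|_{L^2(0,\delta)}
\]
together with a matching self-map estimate. Since $\|K\|_{L^2(0,\delta)}$, $\|\Pi_T\|_{L^2(0,\delta)}$, $\|g\|_{L^2(0,\delta)}$ and $\|p(\fdot,0)\|_{L^1(0,\delta)}$ all tend to $0$ as $\delta\downarrow0$, I would first fix any $M>0$ and then take $\delta$ small enough that $\Phi$ maps $B_M$ into itself and is a contraction, whence Banach's theorem gives a unique solution on $[0,\delta]$.

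To extend a solution past a time $\tau$ on which it is defined with $\|\psi\|_{L^2(0,\tau)}<\infty$, I would restart the equation: setting $\phi(t)=\psi(\tau+t)$ and splitting the convolution integral at $\tau$, one checks that $\phi$ solves a Volterra equation of the same form $\phi=\tilde g+K*\tilde p(\fdot,\phi)$, with $\tilde p(t,x)=p(\tau+t,x)$ and forcing term $\tilde g(t)=g(\tau+t)+\Delta_\tau(K*h)(t)$, where $h=p(\fdot,\psi)\bm1_{[0,\tau]}$. The crucial point is that $\tilde g\in L^2_{\rm loc}$: indeed $h\in L^1$ by the local step, so $K*h\in L^2_{\rm loc}$ by Young's inequality, and shifting preserves local square integrability. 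The shifted data $\tilde p$ and $\tilde p(\fdot,0)$ inherit \eqref{eq:local Volterra:1} and the $L^1_{\rm loc}$ bound trivially, so the local step applies and produces an extension. Defining $T_{\rm max}$ as the supremum of lengths of intervals carrying a solution, this restart argument shows that a finite $T_{\rm max}$ with $\|\psi\|_{L^2(0,T_{\rm max})}<\infty$ would permit extension beyond $T_{\rm max}$, a contradiction; hence the blow-up characterization $\|\psi\|_{L^2(0,T_{\rm max})}=\infty$ holds when $T_{\rm max}<\infty$.

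Uniqueness in the stated sense follows by patching. Any two $L^2$ solutions have small norm on a sufficiently short initial subinterval, hence lie in a common ball there and coincide by the contraction; the set of times up to which they agree is closed in $L^2$ and, by the restart argument applied at any such time, open, so it exhausts the common domain. The blow-up characterization then forces any competing solution's interval to lie strictly inside $[0,T_{\rm max})$. Finally, if $g$ and $p$ are real-valued, then complex conjugation commutes with the convolution and with $p$, so $\overline\psi$ solves the same equation; uniqueness gives $\psi=\overline\psi$, i.e.\ $\psi$ is real.

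I expect the delicate point to be the continuation step, specifically verifying that the restarted forcing term $\tilde g$ remains in $L^2_{\rm loc}$; this is what keeps the quadratic nonlinearity tractable and is handled by recognizing the ``memory'' contribution as the shift $\Delta_\tau(K*h)$ of an $L^2_{\rm loc}$ convolution. The superlinear term $\Theta_T|x-y|(|x|+|y|)$ is precisely what restricts the fixed point to a \emph{local} contraction and allows the solution to blow up in finite time, so all global information must be extracted through the continuation/blow-up mechanism rather than from a single a priori bound.
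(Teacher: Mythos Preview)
Your proposal is correct and follows essentially the same approach as the paper: a Banach fixed-point argument on a small $L^2$-ball using Young's inequality $\|K*h\|_{L^2}\le\|K\|_{L^2}\|h\|_{L^1}$ together with the growth bound \eqref{eq:local Volterra:1}, followed by the restart/continuation argument to build the maximal solution and establish the blow-up alternative, and the same open-and-closed argument for uniqueness. The only cosmetic difference is that for the real-valued assertion the paper simply reruns the fixed-point argument in $\R^d$ rather than invoking conjugation; your conjugation argument is fine provided ``$p$ real-valued'' is read as $p(t,\bar z)=\overline{p(t,z)}$, which holds in all the applications in the paper.
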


\begin{remark}
If $K\in L^{2+\varepsilon}_{\rm loc}$ for some $\varepsilon>0$, then it is possible to apply \citet[Theorem~12.4.4]{GLS:90} with $p=2+\varepsilon$ to get existence.
\end{remark}

\begin{proof}
We focus on the complex-valued case; for the real-valued case, simply replace $\C^d$ by $\R^d$ below. We first prove that a solution exists for small times. Let $\rho\in(0,1]$ and $\varepsilon>0$ be constants to be specified later, and define
\[
B_{\rho,\varepsilon} = \{ \psi \in L^2([0,\rho],\C^d)\colon \|\psi\|_{L^2(0,\rho)} \le \varepsilon\}.
\]
Consider the map $F$ acting on elements $\psi\in B_{\rho,\varepsilon}$ by
\[
F(\psi) = g + K * p(\fdot,\psi).
\]
We write $\|\fdot\|_q = \|\fdot\|_{L^q(0,\rho)}$ for brevity in the following computations. The growth condition \eqref{eq:local Volterra:1} along with the Young, Cauchy--Schwarz, and triangle inequalities yield for $\psi,\widetilde\psi\in B_{\rho,\varepsilon}$
\begin{align}
\| F(\psi) \|_2 &\le \| g \|_2 + \| K \|_2\| p(\fdot,\psi) \|_1 \nonumber \\
&\le \| g \|_2 + \| K \|_2 \left( \|p(\fdot,0)\|_1 + \|\Pi_1\|_2\|\psi\|_2+\Theta_1\|\psi\|_2^2 \right) \label{eq:locVol:Y} \\
&\le \| g \|_2 + \| K \|_2 \left( \|p(\fdot,0)\|_1 + \|\Pi_1\|_{L^2(0,1)}\varepsilon + \Theta_1\varepsilon^2 \right) \nonumber
\end{align}
and
\begin{align*}
\| F(\psi) - F(\widetilde\psi) \|_2 &\le  \| K \|_2 \left( \|\Pi_1\|_2 + \Theta_1\left(\|\psi\|_2 + \|\widetilde\psi\|_2\right) \right) \|\psi - \widetilde\psi\|_2 \\
&\le \| K \|_2 \left( \|\Pi_1\|_{L^2(0,1)} + 2\Theta_1\varepsilon \right) \|\psi - \widetilde\psi\|_2.
\end{align*}
Choose $\varepsilon>0$ so that $1 +  \frac{\varepsilon}{2} +  \|\Pi_1\|_{L^2(0,1)}\varepsilon + \Theta_1\varepsilon^2<2$ and $\varepsilon\, ( \|\Pi_1\|_{L^2(0,1)} + 2\Theta_1\varepsilon ) < 2$. Then choose $\rho>0$ so that $\| g \|_2 \vee \| K \|_2 \vee \|p(\fdot,0)\|_1 \le \varepsilon/2$. This yields
\[
\| F(\psi) \|_2 \le \frac\varepsilon2 \left( 1 +  \frac{\varepsilon}{2} +\|\Pi_1\|_{L^2(0,1)}\varepsilon + \Theta_1\varepsilon^2\right) \le \varepsilon
\]
and
\[
\| F(\psi) - F(\widetilde\psi) \|_2 \le \kappa \|\psi - \widetilde\psi\|_2, \qquad \kappa = \frac\varepsilon2 \left( \|\Pi_1\|_{L^2(0,1)} + 2\Theta_1\varepsilon \right) < 1.
\]
Thus $F$ maps $B_{\rho,\varepsilon}$ to itself and is a contraction there, so Banach's fixed point theorem implies that $F$ has a unique fixed point $\psi\in B_{\rho,\varepsilon}$, which is a solution of \eqref{eq:VolIntEq}.

We now extend this to a unique non-continuable solution of \eqref{eq:VolIntEq}. Define the set
\[
J = \{T\in\R_+\colon \text{\eqref{eq:VolIntEq} has a solution $\psi\in L^2([0,T],\C^d)$ on $[0,T]$} \}.
\]
Then $0\in J$, and if $T\in J$ and $0\le S\le T$, then $S\in J$. Thus $J$ is a nonempty interval. Moreover, $J$ is open in $\R_+$. Indeed, pick $T\in J$, let $\psi$ be a solution on $[0,T]$, and set
\[
h(t) = g(T+t) + \int_0^T K(T+t-s)p(s,\psi(s))ds, \quad t\ge0,
\]
which lies in $L^2_{\rm loc}(\R_+,\C^d)$ by a calculation similar to \eqref{eq:locVol:Y}. By what we already proved, the equation
\[
\chi = h + K*p(\fdot+T,\chi)
\]
admits a solution $\chi\in L^2([0,\rho],\C^d)$ on $[0,\rho]$ for some $\rho>0$. Defining $\psi(t)=\chi(t-T)$ for $t\in(T,T+\rho]$, one verifies that $\psi$ solves \eqref{eq:VolIntEq} on $[0,T+\rho]$. Thus $T+\rho\in J$, so $J$ is open in $\R_+$ and hence of the form $J=[0,T_{\rm max})$ for some $0<T_{\rm max}\le\infty$ with $T_{\rm max}\notin J$. This yields a non-continuable solution $(\psi,T_{\rm max})$.

It remains to argue uniqueness. Pick $T\in\R_+$ and $\widetilde\psi\in L^2([0,T],\C^d)$ satisfying \eqref{eq:VolIntEq} on $[0,T]$. Then $T\in J$, so $T<T_{\rm max}$. Let $S$ be the supremum of all $S'\le T$ such that $\widetilde\psi=\psi$ on $[0,S']$. Then $\widetilde\psi=\psi$ on $[0,S]$ (almost everywhere, as elements of $L^2$). If $S<T$, then for $\rho>0$ sufficiently small we have $0<\|\psi-\widetilde\psi\|_{L^2(0,S+\rho)} \le \frac12 \|\psi-\widetilde\psi\|_{L^2(0,S+\rho)}$, a contradiction. Thus $S=T$, and uniqueness is proved.
\end{proof}

\begin{corollary}\label{C:linearVE}
Let $K\in L^{2}_{\rm loc}(\R_+,\C^{d\times d})$, $F\in L^2_{\rm loc}(\R_+,\C^d)$ and $G\in L^2_{\rm loc}(\R_+,\C^{d\times d})$. Suppose that  $p\colon \R_+\times \C^d\to\C^d$ is a Lipschitz continuous function in the second argument such that $p(\fdot,0)\in L^2_{\rm loc}(\R_+,\C^d)$. Then the equation  
\begin{equation*}
\chi =  F + K*\left(G p(\fdot, \chi) \right)
\end{equation*}
has a unique global solution $\chi\in L^{2}_{\rm loc}(\R_+,\C^{d})$. Moreover, if $K$ and $F$ are continuous on $[0,\infty)$ then $\chi$ is also continuous on $[0,\infty)$ and $\chi(0)=F(0)$.
\end{corollary}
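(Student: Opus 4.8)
The plan is to recognize the equation as an instance of \eqref{eq:VolIntEq}, obtain a local solution from Theorem~\ref{T:local Volterra}, promote it to a global one via an a priori $L^2$-estimate, and finally extract continuity from the convolution structure. First I would absorb $G$ into the nonlinearity, setting $\tilde p(t,x)=G(t)\,p(t,x)$ and $g=F$, so that the equation reads $\chi=g+K*\tilde p(\fdot,\chi)$. To invoke Theorem~\ref{T:local Volterra} I check its hypotheses: $g=F\in L^2_{\rm loc}$ by assumption; $\tilde p(\fdot,0)=G\,p(\fdot,0)\in L^1_{\rm loc}$ by Cauchy--Schwarz, since $G$ and $p(\fdot,0)$ lie in $L^2_{\rm loc}$; and, denoting by $c_0$ a Lipschitz constant of $p(t,\fdot)$ uniform in $t$, we have $|\tilde p(t,x)-\tilde p(t,y)|\le c_0|G(t)|\,|x-y|$, so the growth condition \eqref{eq:local Volterra:1} holds with $\Pi_T=c_0|G|\in L^2([0,T])$ and $\Theta_T$ any positive constant (the quadratic term is not needed). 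Theorem~\ref{T:local Volterra} then produces a unique non-continuable solution $(\chi,T_{\rm max})$, so everything reduces to showing $T_{\rm max}=\infty$.

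The globalization is the crux, and the difficulty is that $G$ is only square-integrable: the natural Gronwall kernel is genuinely non-convolution, so a scalar convolution resolvent does not suffice. Arguing by contradiction, suppose $T_{\rm max}<\infty$; I will bound $\|\chi\|_{L^2(0,T)}$ uniformly in $T<T_{\rm max}$. Writing $w=|\chi|$ and using the linear growth $|\tilde p(t,x)|\le|G(t)|\big(|p(t,0)|+c_0|x|\big)$, the equation gives on $[0,T]$
\[
w\le a+c_0\,|K|*(|G|\,w),\qquad a:=|F|+|K|*\big(|G|\,|p(\fdot,0)|\big).
\]
Since $|G|\,|p(\fdot,0)|\in L^1(0,T_{\rm max})$ and $K\in L^2(0,T_{\rm max})$ (here $T_{\rm max}<\infty$ is used), Young's inequality gives $a\in L^2(0,T_{\rm max})$, and for the same reason the causal operator $\Kcal w:=c_0\,|K|*(|G|\,w)$ is bounded on $L^2([0,T_{\rm max}])$ with $\|\Kcal\|\le c_0\|K\|_{L^2(0,T_{\rm max})}\|G\|_{L^2(0,T_{\rm max})}$. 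Its norm over a window $(\tau,\tau+\delta)$ is at most $c_0\|K\|_{L^2(0,\delta)}\|G\|_{L^2(\tau,\tau+\delta)}$, which is $<1$ once $\delta$ is small, by absolute continuity of $\int|G|^2$; covering $[0,T_{\rm max}]$ by finitely many such windows and iterating the Neumann series (the $L^2$ resolvent theory of \citet[Chapter~9]{GLS:90}, in the spirit of Lemma~\ref{L:moment bound}) shows that $I-\Kcal$ is boundedly invertible on $L^2([0,T_{\rm max}])$ with positivity-preserving inverse $\sum_{n\ge0}\Kcal^n$. Because $\Kcal$ is causal, its restriction to $L^2([0,T])$ has an inverse of the same form and of norm bounded uniformly in $T\le T_{\rm max}$; applying it to $(I-\Kcal)w\le a$ and using positivity yields $\|w\|_{L^2(0,T)}\le C\|a\|_{L^2(0,T_{\rm max})}$ with $C$ independent of $T<T_{\rm max}$. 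Letting $T\uparrow T_{\rm max}$ then forces $\|\chi\|_{L^2(0,T_{\rm max})}<\infty$, contradicting non-continuability; hence $T_{\rm max}=\infty$, and $\chi$ is the unique global solution.

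For the final claim, assume $K$ and $F$ are continuous on $[0,\infty)$. Now that $\chi\in L^2_{\rm loc}$ is global, linear growth gives $p(\fdot,\chi)\in L^2_{\rm loc}$, hence $\phi:=G\,p(\fdot,\chi)\in L^1_{\rm loc}$ by Cauchy--Schwarz. Since $K$ is continuous and therefore locally bounded, $K*\phi$ is continuous on $[0,\infty)$ by dominated convergence (cf.~the continuity properties of convolutions in \citet[Theorem~2.2.2]{GLS:90}), so $\chi=F+K*\phi$ is continuous. Evaluating at $t=0$ and using $(K*\phi)(0)=0$ gives $\chi(0)=F(0)$, which completes the proof.
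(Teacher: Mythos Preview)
Your argument is correct and follows essentially the same approach as the paper: both reduce to Theorem~\ref{T:local Volterra} by absorbing $G$ into the nonlinearity, then globalize via a Gronwall-type bound for $|\chi|$ using the non-convolution kernel $(t,s)\mapsto c_0|K(t-s)||G(s)|$, and finally read off continuity from the convolution structure. The only cosmetic difference is that the paper packages the globalization step by citing specific results from \citet[Chapter~9]{GLS:90} (Corollary~9.3.16 for existence of the $L^2_{\rm loc}$ resolvent, Proposition~9.8.1 for its sign, Lemma~9.8.2 for the comparison, and Theorem~9.3.6 for the resulting $L^2$ bound), whereas you sketch the same resolvent construction in operator-theoretic language via windowed Neumann series and positivity of $(I-\Kcal)^{-1}$.
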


\begin{proof}
Theorem~\ref{T:local Volterra} implies the existence and uniqueness of a non-continuable solution $(\chi,T_{\rm max})$. If $K$ and $F$ are continuous on $[0,\infty)$, then this solution is continuous on $[0,T_{\rm max})$ with $\chi(0)=F(0)$.  To prove that $T_{\rm max}=\infty$, observe that
\begin{equation}\label{E:ineqChi}
|\chi|\leq |F|+|K|*(|G|(|p(\fdot,0)|+\Theta|\chi|))
\end{equation}
for some positive constant $\Theta$.  Define the scalar non-convolution Volterra kernel $K'(t,s)=\Theta|K(t-s)||G(s)|\bm1_{s\le t}$. This is a Volterra kernel in the sense of \citet[Definition~9.2.1]{GLS:90} and
\begin{align}\label{E:youngNonconv}
\int_0^T \int_0^T \bm1_{s\le t}|K(t-s)|^2|G(s)|^2 ds\, dt \le \|\, K \|^2_{L^2(0,T)}\|\, G \|^2_{L^2(0,T)} 
\end{align}
for all $T >0$, by Young's inequality.
	Thus by \citet[Proposition~9.2.7(iii)]{GLS:90}, $K'$ is of type $L^2_{\rm loc}$, see \citet[Definition~9.2.2]{GLS:90}. In addition, it follows from  \citet[Corollary~9.3.16]{GLS:90} that $-K'$ admits a resolvent of type $L^2_{\rm loc}$ in the sense of \citet[Definition~9.3.1]{GLS:90}, which we denote by $R'$. Since $-K'$ is nonpositive,  it follows from \citet[Proposition~9.8.1]{GLS:90} that $R'$ is also nonpositive. The Gronwall type inequality in \citet[Lemma~9.8.2]{GLS:90} and~\eqref{E:ineqChi} then yield 
\begin{equation}\label{E:boundonChi}
|\chi(t)|\leq f'(t)-\int_0^t R'(t,s)f'(s)\,ds
\end{equation}
for $t\in[0,T_{\rm max}]$, where
\[
f'(t)=|F(t)|+\int_0^t |K(t-s)|\, |G(s)|\,|p(s,0)| ds.
\]
Since the function on the right-hand side of~\eqref{E:boundonChi} is in $L^2_{\rm loc}(\R_+,\R)$ due to \citet[Theorem~9.3.6]{GLS:90}, we conclude that $T_{\rm max}=\infty$.
\end{proof}

\section{Invariance results for Volterra integral equations}

\begin{lemma} \label{L:vol stability}
Fix $T<\infty$. Let $u\in\C^d$, $G\in L^2([0,T],\C^{d\times d})$, as well as $F^n\in L^2([0,T],\C^d)$ and $K^n\in L^2([0,T],\C^{d\times d})$ for $n=0,1,2,\ldots$. For each $n$, there exists a unique element $\chi^n\in L^2([0,T],\C^{d\times d})$ such that
\[
\chi^n = F^n + K^n*(G\chi^n).
\]
Moreover, if $F^n\to F^0$ and $K^n\to K^0$ in $L^2(0,T)$, then $\chi^n\to\chi^0$ in $L^2(0,T)$.
\end{lemma}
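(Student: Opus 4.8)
The plan is to separate existence and uniqueness of each $\chi^n$ from the convergence statement, and to reduce the convergence statement to a \emph{uniform} (in $n$) stability estimate for the linear solution operator. For existence and uniqueness I would simply invoke Corollary~\ref{C:linearVE} with $p(t,x)=x$ (which is Lipschitz in its second argument, with $p(\fdot,0)\equiv0\in L^2_{\rm loc}$), kernel $K^n$, and inhomogeneity $F^n$; in the matrix-valued case one applies the corollary column by column. This yields, for each $n$, a unique solution on $[0,T]$.

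For the convergence I would subtract the equations satisfied by $\chi^n$ and $\chi^0$ and rewrite the difference $\delta^n:=\chi^n-\chi^0$ as the solution of a linear Volterra equation with the \emph{same} kernel $K^n$ but a small source:
\[
\delta^n = E^n + K^n*(G\delta^n), \qquad E^n := (F^n-F^0) + (K^n-K^0)*(G\chi^0).
\]
The source tends to $0$ in $L^2(0,T)$: the first term does so by hypothesis, and for the second, Young's inequality followed by Cauchy--Schwarz gives $\|(K^n-K^0)*(G\chi^0)\|_{L^2(0,T)}\le \|K^n-K^0\|_{L^2(0,T)}\,\|G\|_{L^2(0,T)}\,\|\chi^0\|_{L^2(0,T)}\to0$. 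It then suffices to establish a stability bound $\|\delta^n\|_{L^2(0,T)}\le C\,\|E^n\|_{L^2(0,T)}$ with $C$ \emph{independent of $n$}.

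The heart of the matter is this uniform bound, and the main obstacle is twofold: $G$ is only square integrable, so a crude Gronwall estimate using $\sup|G|$ is unavailable; and the kernel $K^n$ varies with $n$, so the length of the subinterval on which the convolution operator is contractive must be controlled uniformly in $n$. Both difficulties are resolved by noting that $K^n\to K^0$ in $L^2(0,T)$ forces $|K^n|^2\to|K^0|^2$ in $L^1(0,T)$, so that the family $\{|K^n|^2:n\ge0\}$ is uniformly integrable. Consequently there is $\rho>0$, independent of $n$, with $\sup_n\|K^n\|_{L^2(I)}\,\|G\|_{L^2(0,T)}\le\tfrac12$ for every subinterval $I\subseteq[0,T]$ of length at most $\rho$. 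On such a subinterval the restricted convolution operator $\phi\mapsto K^n*(G\phi)$ is then a contraction of $L^2$-norm at most $\tfrac12$, uniformly in $n$: here I pair the $L^2$ kernel with the $L^1$ product $G\phi$ via Young's inequality and bound $\|G\phi\|_{L^1}\le\|G\|_{L^2}\|\phi\|_{L^2}$ by Cauchy--Schwarz, which is exactly what accommodates $G\in L^2$.

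Finally I would partition $[0,T]$ into finitely many subintervals $[t_{k-1},t_k]$ of length at most $\rho$, their number $m=\lceil T/\rho\rceil$ being independent of $n$, and run the standard Volterra step method. On the $k$-th block the contraction absorbs the diagonal part of $K^n*(G\delta^n)$, while the contributions of the earlier blocks are bounded, via Minkowski's integral inequality and $C:=\sup_n\|K^n\|_{L^2(0,T)}<\infty$, by $C\,\|G\|_{L^2(0,T)}\sum_{j<k}\|\delta^n\|_{L^2(t_{j-1},t_j)}$. This produces a recursion of the form $\|\delta^n\|_{L^2(t_{k-1},t_k)}\le 2\,\|E^n\|_{L^2(t_{k-1},t_k)}+C'\sum_{j<k}\|\delta^n\|_{L^2(t_{j-1},t_j)}$, and a discrete Gronwall inequality over the $m$ blocks yields the uniform bound $\|\delta^n\|_{L^2(0,T)}\le C\,\|E^n\|_{L^2(0,T)}$. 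Combined with $E^n\to0$ in $L^2(0,T)$, this gives $\chi^n\to\chi^0$ in $L^2(0,T)$ and completes the proof.
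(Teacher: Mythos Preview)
Your proof is correct and takes a genuinely different route from the paper's. The paper casts the equation as a non-convolution Volterra equation with kernel $K'(t,s)=K(t-s)G(s)\bm1_{\{s\le t\}}$, bounds its operator norm $\vertiii{K'}_{L^2(0,T)}$ by $\|K\|_{L^2(0,T)}\|G\|_{L^2(0,T)}$ via Young's inequality, and then invokes the abstract resolvent theory of \cite{GLS:90} (Corollaries~9.3.12 and~9.3.16) both for existence of the resolvent and for continuity of the solution map $(K,F)\mapsto\chi$. Your approach instead produces a self-contained stability estimate by a step method: you exploit equi-absolute-continuity of the family $\{|K^n|^2\}$ (a consequence of its $L^1$ convergence) to find a block length $\rho$ on which all the convolution operators contract uniformly in $n$, and then propagate the estimate across $\lceil T/\rho\rceil$ blocks via a discrete Gronwall inequality. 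The paper's route is short but leans on external machinery; yours is more elementary and transparent, and avoids non-convolution resolvents entirely. One minor remark: for the contraction on the $k$-th block you only need $\sup_n\|K^n\|_{L^2(0,\rho)}$ to be small, since after the shift the convolution kernel is always evaluated on $[0,\rho]$; your stronger statement about all subintervals of length $\le\rho$ is true but not needed.
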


\begin{proof}
For any $K\in L^2([0,T],\C^{d\times d})$, define $K'(t,s)=K(t-s)G(s)\bm1_{s\le t}$. Arguing as in the proof of Corollary \ref{C:linearVE}, $K'$ is a Volterra  kernel of type  $L^2$ on $(0,T)$ since \eqref{E:youngNonconv}  still holds by  Young's inequality. In particular,
\begin{equation} \label{|||K|||}
	\iii K' \iii_{L^2(0,T)} \le \|\, K \|_{L^2(0,T)}\|\, G \|_{L^2(0,T)},
\end{equation}
where $\iii\fdot\iii_{L^2(0,T)}$ is defined in \citet[Definition~9.2.2]{GLS:90}. Invoking once again \citet[Corollary~9.3.16]{GLS:90}, $-K'$ admits a resolvent $R'$ of type $L^2$ on $(0,T)$. Due to \citet[Theorem~9.3.6]{GLS:90}, the unique solution in $L^2(0,T)$ of the equation
	\[
	\chi(t) = F(t) + \int_0^t K'(t,s)\chi(s)ds, \quad t\in[0,T],
	\]
	for a given $F\in L^2([0,T],\C^d)$, is
	\[
	\chi(t) = F(t) - \int_0^t R'(t,s)F(s)ds, \quad t\in[0,T].
	\]
	This proves the existence and uniqueness statement for the $\chi^n$. Next, assume $F^n\to F^0$ and $K^n\to K^0$ in $L^2(0,T)$.  Applying \eqref{|||K|||} with $K=K^n-K^0$ shows that $(K')^n \to (K')^0$ with respect to the norm $\iii\fdot \iii_{L^2(0,T)}$. An application of \citet[Corollary~9.3.12]{GLS:90} now shows that $\chi^n\to\chi^0$ in $L^2(0,T)$ as claimed.
\end{proof}

\begin{theorem} \label{T:positive_2}
Assume $K\in L^2_{\rm loc}(\R_+,\R^{d\times d})$ is diagonal with scalar kernels $K_i$ on the diagonal. Assume each $K_i$ satisfies \eqref{K_gamma} and the shifted kernels $\Delta_h K_i$ satisfy \eqref{eq:K orthant} for all $h\in[0,1]$. Let $u,v\in\R^d$, $F\in L^1_{\rm loc}(\R_+,\R^d)$ and $G\in L^2_{\rm loc}(\R_+,\R^{d\times d})$ be such that $u_i,v_i\ge0$, $F_i\ge0$, and $G_{ij}\ge0$ for all $i,j=1,\ldots,d$ and $i\ne j$. Then the linear Volterra equation 
\begin{equation}\label{E:chi}
\chi =  K u +v+ K*\left( F+ G\chi \right)
\end{equation}
has a unique solution $\chi\in L^2_{\rm loc}(\R_+,\R^d)$ with $\chi_i\ge0$ for $i=1,\ldots,d$.
\end{theorem}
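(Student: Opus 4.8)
The existence and uniqueness of a global solution $\chi\in L^2_{\rm loc}(\R_+,\R^d)$ is not where the difficulty lies: writing \eqref{E:chi} as $\chi=\bar g+K*(G\chi)$ with free term $\bar g=Ku+v+K*F$, one checks $\bar g\in L^2_{\rm loc}$ (Young's inequality gives $K*F\in L^2_{\rm loc}$), so Corollary~\ref{C:linearVE} applied with the Lipschitz map $p(t,x)=x$ yields a unique global, real-valued solution. The whole content of the theorem is thus the invariance statement $\chi_i\ge0$, and my plan is to transpose the boundary argument of Theorem~\ref{T:existence orthant} from the stochastic to the deterministic, time-dependent linear setting. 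The structural reason invariance should hold is quasi-monotonicity: since $u_i,v_i,F_i\ge0$ and $G_{ij}\ge0$ for $i\ne j$, the free term $\bar g$ is nonnegative, and the drift $F_i+(G\chi)_i=F_i+G_{ii}\chi_i+\sum_{j\ne i}G_{ij}\chi_j$ is nonnegative on the face $\{\chi_i=0,\ \chi_j\ge0\ (j\ne i)\}$ because the sign-indefinite diagonal contribution $G_{ii}\chi_i$ vanishes there. This is exactly the deterministic analogue of the boundary condition ``$x_i=0\Rightarrow b_i\ge0,\ \sigma_i=0$'' used for the orthant.

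First I would run the first-exit argument of Theorem~\ref{T:existence orthant} for a suitably regularized equation, since the genuine solution is only $L^2_{\rm loc}$ and its diagonal drift coefficient has no fixed sign. Concretely I would (a) truncate $G$ to a bounded $G^m$ preserving $G^m_{ij}\ge0$ for $i\ne j$, and replace the singular free term by $v+K*\eta^m$ with $\eta^m=u\rho_m+F^m\ge0$ in $L^2_{\rm loc}$ (mollifying the point mass $u\delta_0$ and $F$), so that the feedback $K*(\,\cdot\,)$ has an $L^2_{\rm loc}$ integrand and the solution becomes \emph{continuous}; and (b) insert the componentwise shift $(\chi-n^{-1}\mathbf 1)^+$ into the feedback, so that on $\{\chi_i\le n^{-1}\}$ the diagonal term drops out and the drift is nonnegative \emph{on a full neighbourhood of the face}, irrespective of the other coordinates. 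The shifted, regularized equation still has a unique continuous solution $\chi^{m,n}$ by Corollary~\ref{C:linearVE} (the shift is Lipschitz), and it is of the form $\chi^{m,n}=v+K*dZ$ required by Lemma~\ref{L:ZX}.

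Setting $\tau=\inf\{t:\chi^{m,n}_t\notin\R^d_+\}$ and assuming $\tau<\infty$, I would decompose $\chi^{m,n}_{\tau+h}$ as in \eqref{eq:orth:0}, apply the resolvent-of-the-first-kind identity \eqref{L:ZX:2} with $F=\Delta_hK_i$ (whose components satisfy \eqref{K_gamma} and \eqref{eq:K orthant} by hypothesis, so that the monotonicity bounds \eqref{eq:orth:1}--\eqref{eq:orth:2} hold), and use $v_i\ge0$ together with $\chi^{m,n}\ge0$ on $[0,\tau]$ to conclude that the memory contribution is nonnegative. The residual term $\int_0^hK_i(h-s)(\text{drift})_i(\tau+s)\,ds$ is nonnegative by the neighbourhood boundary condition produced by the shift, contradicting $\chi^{m,n}_i<0$ just after $\tau$; hence $\chi^{m,n}$ is $\R^d_+$-valued.

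Finally I would remove the regularizations. Letting $m\to\infty$ with $n$ fixed, the free terms converge in $L^2_{\rm loc}$ (with $u(K*\rho_m)\to uK$) and $G^m\to G$, so a stability estimate of the type in Lemma~\ref{L:vol stability}, combined with the $L^2$-resolvent Gronwall inequality to absorb the Lipschitz shift, gives $\chi^{m,n}\to\chi^n$ in $L^2_{\rm loc}$ and hence $\chi^n\ge0$; letting $n\to\infty$, since $\chi^n\ge0$ one has $|(\chi^n-n^{-1}\mathbf 1)^+-\chi^n|\le n^{-1}$, so $\chi^n$ solves \eqref{E:chi} up to an $L^2_{\rm loc}$-vanishing forcing and Lemma~\ref{L:vol stability} yields $\chi^n\to\chi$ in $L^2_{\rm loc}$, whence $\chi_i\ge0$ a.e. The main obstacle is precisely the mismatch between the pointwise nature of the exit argument and the low regularity (merely $L^2_{\rm loc}$, with singular free term) and sign-indefiniteness (the diagonal of $G$) of the genuine equation; it is overcome by the twofold regularization above, with the resolvent-of-the-first-kind monotonicity \eqref{eq:orth:1}--\eqref{eq:orth:2} doing the essential work of controlling the Volterra memory.
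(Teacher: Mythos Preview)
Your plan is correct and closely parallels the paper's own proof: both regularize to obtain a continuous solution, insert the componentwise shift $(\chi-n^{-1})^+$ so that the inward-drift condition holds on a full neighbourhood of each face, run the first-exit-time argument of Theorem~\ref{T:existence orthant} via the monotonicity \eqref{eq:orth:1}--\eqref{eq:orth:2} of $\Delta_hK_i*L_i$, and pass to the limit using an $L^2$-resolvent Gronwall bound. The difference is only in how continuity is arranged. You keep $K$ singular and instead mollify the data---replacing $Ku$ by $K*(u\rho_m)$, smoothing $F$ into $L^2_{\rm loc}$, truncating $G$ to bounded---so that the regularized solution has the clean form $\chi^{m,n}=v+K*(\text{$L^2_{\rm loc}$ density})$, with no residual $Ku$ term to carry through the exit-time computation. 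The paper takes the opposite route: it replaces $K$ by the shifted kernel $\Delta_{1/n}K$, which is continuous on $[0,\infty)$ and still satisfies \eqref{eq:K orthant} by hypothesis, thereby reducing at once to the case of continuous $K$; it then works with the original data and handles the $Ku$ term via the explicit identity $d(\Delta_hK_i*L_i)*K_i=\Delta_hK_i-(\Delta_hK_i*L_i)(0)K_i$. The paper's regularization is leaner---a single parameter, and the continuity clause of Corollary~\ref{C:linearVE} applies as stated---whereas your route needs the separate observation that $K*f$ is continuous for $K,f\in L^2_{\rm loc}$ and a mild extension of Lemma~\ref{L:vol stability} to varying $G^m$ together with the Lipschitz shift; in exchange you get a structurally simpler exit-time step.
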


\begin{proof}
Define kernels $K^n=K(\fdot+n^{-1})$ for $n\in\N$, which are diagonal with scalar kernels on the diagonal that satisfy \eqref{eq:K orthant}. Example~\ref{ex:kernels}\ref{ex:kernels:vi} shows that the scalar kernels on the diagonal of $K^n$ also satisfy \eqref{K_gamma}. Lemma~\ref{L:vol stability} shows that \eqref{E:chi} (respectively \eqref{E:chi} with $K$ replaced by $K^n$) has a unique solution $\chi$ (respectively $\chi^n$), and that $\chi^n\to\chi$ in $L^2(\R_+,\R^d)$. Therefore, we can suppose without loss of generality that $K$ is continuous on $[0,\infty)$ with $K_i(0)\ge 0$. To shows that $\chi$ takes values in $\R^d_+$, it is therefore enough to consider the case where $K$ is continuous on $[0,\infty)$ with $K_i(0)\ge 0$ for all $i$.

For $x\in \R^d$ define $b(x)=F+Gx$. For all positive $n$, Corollary~\ref{C:linearVE} implies that there exists a unique solution $\chi^n\in L^2_{\rm loc}(\R_+,\R^d)$ of the equation
\[
\chi^n= Ku +v+ K* b((\chi^n-n^{-1})^+),
\]
and that $\chi^n$ is continuous on $[0,\infty)$ with $\chi^n_i(0)=K_i(0)u_i+v_i\geq 0$ for $i=1,\ldots,d$. We claim that $\chi^n$ is $\R_+^d$ valued for all $n$. Indeed,  arguing as in the proof of Theorem~\ref{T:existence orthant}, we can show that if $L_i$ denotes the resolvent of the first kind of $K_i$, then $(\Delta_h K_i * L_i)(t)$ is right-continuous, nonnegative, bounded by $1$, and nondecreasing in $t$ for any $h\ge0$. Fix $n$ and define $Z=\int b((\chi^n-n^{-1})^+)\,dt$. The argument of Lemma~\ref{L:ZX} shows that for all $h\geq 0$ and $i=1,\ldots,d$,
\begin{equation}\label{E:IBP_Inv_Ricatti}
\begin{aligned}
\Delta_h K_i *dZ_i&=(\Delta_h K_i * L_i)(0)K_i*dZ_i+d(\Delta_h K_i * L_i)*K_i*dZ_i\\
&=(\Delta_h K_i * L_i)(0)\chi^n_i + d(\Delta_h K_i * L_i)*\chi^n_i \\
&\quad -u_i\left((\Delta_h K_i * L_i)(0)K_i + d(\Delta_h K_i * L_i)*K_i \right))-v_i\Delta_h K_i * L_i.
\end{aligned}
\end{equation}
Convolving the quantity $d(\Delta_h K_i * L_i)*K_i$ first by $L_i$, then by $K_i$, and comparing densities of the resulting absolutely continuous functions, we deduce that
\[
d(\Delta_h K_i * L_i)*K_i=\Delta_h K_i-(\Delta_h K_i * L_i)(0)K_i \quad \text{a.e.}
\]
Plugging this identity into~\eqref{E:IBP_Inv_Ricatti} yields
\begin{equation}\label{E:IBP_Inv_Ricatti_2}
\Delta_h K_i *dZ_i=(\Delta_h K_i * L_i)(0)\chi^n_i+d(\Delta_h K_i * L_i)*\chi^n_i-u_i\Delta_h K_i-v_i\Delta_h K_i * L_i.
\end{equation}
Define $\tau=\inf\{t\ge0\colon \chi^n_t\notin\R^d_+\}$ and assume for contradiction that $\tau<\infty$. Then
\begin{equation}\label{E:shiifted_RIcatti_sigma}
\begin{split}
\chi^n(\tau+h)&=\Delta_h K(\tau)u+v + (K*dZ)_{\tau+h} \\
&= \Delta_h K(\tau) u+v+ (\Delta_h K * dZ)_\tau + \int_0^h K(h-s) d Z_{\tau+s}
\end{split}
\end{equation}
for any $h\ge0$. By definition of $\tau$, the identities~\eqref{E:IBP_Inv_Ricatti_2} and~\eqref{E:shiifted_RIcatti_sigma} imply
\[
\chi^n_i(\tau+h)\geq \int_0^h K_i(h-s)b_i((\chi^n(\tau+s)-n^{-1})^+)\,ds,\quad i=1,\ldots,d.
\]
As in the proof of Theorem~\ref{T:existence orthant}, these inequalities lead to a contradiction. Hence $\tau=\infty$ and $\chi^n$ is $\R_+^d$-valued for all $n$. 

To conclude that $\chi$ is $\R^d_+$-valued it suffices to prove that $\chi^n$ converges to $\chi$ in $L^2([0,T],\R^d)$ for all $T\in\R_+$. To this end we write
\[
\chi-\chi^n=K*\left(G(\chi^n-(\chi^n-n^{-1})^+)+G(\chi-\chi^n)\right),
\]
from which we infer
\[
|\chi-\chi^n|\leq \frac{\sqrt{d}}{n}|K|*|G|+|K|*(|G||\chi-\chi^n|).
\]
The same argument as in the proof of Corollary~\ref{C:linearVE} shows that
\begin{equation}\label{E:boundChiChin}
|\chi-\chi^n|\leq \frac{\sqrt{d}}{n}\left(F'-\int_0^{\fdot} R'(\fdot,s)F'(s)\,ds\right),
\end{equation}
where $R'$ is the nonpositive resolvent of type $L^2_{\rm loc}$ of $K'(t,s)=|K(t-s)||G(s)|\bm1_{s\le t}$, and $F'=|K|*|G|$. Since the right-hand side of~\eqref{E:boundChiChin} is in $L^2_{\rm loc}(\R_+,\R)$ in view of \citet[Theorem~9.3.6]{GLS:90}, we conclude that $\chi^n$ converges to $\chi$ in $L^2([0,T],\R^d)$ for all $T\in\R_+$.
\end{proof}

% ----------------------------------------------------------------
\bibliographystyle{plainnat}
\bibliography{bibl}
\end{document}